\documentclass[a4paper,twoside]{article}
\usepackage{a4}
\usepackage{amssymb}
\usepackage{amsmath}
\usepackage{upref}
\usepackage{xypic}
\usepackage[active]{srcltx}
\usepackage[dvips,colorlinks,citecolor=blue,linkcolor=blue]{hyperref}
\usepackage[dvipsnames]{color}
\allowdisplaybreaks[2] 
%
%
%
\newcount\minutes \newcount\hours
\hours=\time
\divide\hours 60
\minutes=\hours
\multiply\minutes -60
\advance\minutes \time
\newcommand{\klockan}{\the\hours:{\ifnum\minutes<10 0\fi}\the\minutes}
\newcommand{\tid}{\today\ \klockan}
\newcommand{\prtid}{\smash{\raise 10mm \hbox{\LaTeX ed \tid}}}
\renewcommand{\prtid}{}
%
%
\makeatletter
\pagestyle{headings}
\headheight 10pt
\def\sectionmark#1{} 
\def\subsectionmark#1{}
\newcommand{\sectnr}{\ifnum \c@secnumdepth >\z@
                 \thesection.\hskip 1em\relax \fi}
\def\@evenhead{\footnotesize\rm\thepage\hfil\leftmark\hfil\llap{\prtid}}
\def\@oddhead{\footnotesize\rm\rlap{\prtid}\hfil\rightmark\hfil\thepage}
\def\tableofcontents{\section*{Contents} 
 \@starttoc{toc}}
\makeatother
%
%
\makeatletter
\def\@biblabel#1{#1.}
\makeatother
%
%
%
\makeatletter
\let\Thebibliography=\thebibliography
\renewcommand{\thebibliography}[1]{\def\@mkboth##1##2{}\Thebibliography{#1}
\addcontentsline{toc}{section}{References}
\frenchspacing 
\setlength{\@topsep}{0pt}
\setlength{\itemsep}{0pt}%
\setlength{\parskip}{0pt plus 2pt}%
}
\makeatother
%
%
\makeatletter
\def\mdots@{\mathinner.\nonscript\!.%
 \ifx\next,.\else\ifx\next;.\else\ifx\next..\else
 \nonscript\!\mathinner.\fi\fi\fi}
\let\ldots\mdots@
\let\cdots\mdots@
\let\dotso\mdots@
\let\dotsb\mdots@
\let\dotsm\mdots@
\let\dotsc\mdots@
\def\vdots{\vbox{\baselineskip2.8\p@ \lineskiplimit\z@
    \kern6\p@\hbox{.}\hbox{.}\hbox{.}\kern3\p@}}
\def\ddots{\mathinner{\mkern1mu\raise8.6\p@\vbox{\kern7\p@\hbox{.}}%
    \raise5.8\p@\hbox{.}\raise3\p@\hbox{.}\mkern1mu}}
\makeatother
%
%
\makeatletter
\let\Enumerate=\enumerate
\renewcommand{\enumerate}{\Enumerate%
\setlength{\@topsep}{0pt}
\setlength{\itemsep}{0pt}%
\setlength{\parskip}{0pt plus 1pt}%
\renewcommand{\theenumi}{\textup{(\alph{enumi})}}%
\renewcommand{\labelenumi}{\theenumi}%
}
\let\endEnumerate=\endenumerate
\renewcommand{\endenumerate}{\endEnumerate\unskip}
\makeatother
%
%
\makeatletter
\def\@seccntformat#1{\csname the#1\endcsname.\quad}
\makeatother
%
%
\newcommand{\authortitle}[2]{\author{#1}\title{#2}\markboth{#1}{#2}}
%
%
\newcommand{\art}[6]{{\sc #1, \rm #2, \it #3\/ \bf #4 \rm (#5), \mbox{#6}.}}
\newcommand{\auth}[2]{{#1, #2.}}
\newcommand{\artprep}[3]{{\sc #1, \rm #2, \it #3.}}
\newcommand{\artin}[3]{{\sc #1, \rm #2, in #3.}}
\newcommand{\book}[3]{{\sc #1, \it #2, \rm #3.}}
\newcommand{\AND}{{\rm and }}
%
%
\RequirePackage{amsthm}
\newtheoremstyle{descriptive}%
  {\topsep}   
  {\topsep}   
  {\rmfamily} 
  {}          
  {\bfseries} 
  {.}         
  { }         
  {}          
\newtheoremstyle{propositional}%
  {\topsep}   
  {\topsep}   
  {\itshape}  
  {}          
  {\bfseries} 
  {.}         
  { }         
  {}          
\theoremstyle{propositional}
\newtheorem{thm}{Theorem}[section]
\newtheorem{prop}[thm]{Proposition}
\newtheorem{lem}[thm]{Lemma}
\theoremstyle{descriptive}
\newtheorem{deff}[thm]{Definition}
\newtheorem{example}[thm]{Example}
\newtheorem{remark}[thm]{Remark}
%
%
%
%
%
\makeatletter
\renewenvironment{proof}[1][\proofname]{\par
  \pushQED{\qed}%
  \normalfont
  \trivlist
  \item[\hskip\labelsep
        \itshape
    #1\@addpunct{.}]\ignorespaces
}{%
  \popQED\endtrivlist\@endpefalse
}
\makeatother
%
%
\newcommand{\setm}{\setminus}
\renewcommand{\subsetneq}{\varsubsetneq}
\renewcommand{\emptyset}{\varnothing}
%
%
%
%
%
\def\vint{\mathop{\mathchoice%
          {\setbox0\hbox{$\displaystyle\intop$}\kern 0.22\wd0%
           \vcenter{\hrule width 0.6\wd0}\kern -0.82\wd0}%
          {\setbox0\hbox{$\textstyle\intop$}\kern 0.2\wd0%
           \vcenter{\hrule width 0.6\wd0}\kern -0.8\wd0}%
          {\setbox0\hbox{$\scriptstyle\intop$}\kern 0.2\wd0%
           \vcenter{\hrule width 0.6\wd0}\kern -0.8\wd0}%
          {\setbox0\hbox{$\scriptscriptstyle\intop$}\kern 0.2\wd0%
           \vcenter{\hrule width 0.6\wd0}\kern -0.8\wd0}}%
          \mathopen{}\int}
%
%
\newcommand{\grad}{{\nabla}}
\DeclareMathOperator{\dvg}{div}
\DeclareMathOperator{\Lip}{Lip}
\newcommand{\Lipc}{{\Lip_c}}
\DeclareMathOperator{\spt}{supp}
\newcommand{\supp}{\spt}
\DeclareMathOperator*{\essliminf}{ess\,lim\,inf}
\DeclareMathOperator*{\essinf}{ess\,inf}
\DeclareMathOperator*{\esssup}{ess\,sup}
\newcommand{\bdry}{\partial}
\newcommand{\bdy}{\bdry}
\newcommand{\loc}{_{\rm loc}}
{\catcode`p =12 \catcode`t =12 \gdef\eeaa#1pt{#1}}      
\def\accentadjtext#1{\setbox0\hbox{$#1$}\kern   
                \expandafter\eeaa\the\fontdimen1\textfont1 \ht0 }
\def\accentadjscript#1{\setbox0\hbox{$#1$}\kern 
                \expandafter\eeaa\the\fontdimen1\scriptfont1 \ht0 }
\def\accentadjscriptscript#1{\setbox0\hbox{$#1$}\kern   
                \expandafter\eeaa\the\fontdimen1\scriptscriptfont1 \ht0 }
\def\accentadjtextback#1{\setbox0\hbox{$#1$}\kern       
                -\expandafter\eeaa\the\fontdimen1\textfont1 \ht0 }
\def\accentadjscriptback#1{\setbox0\hbox{$#1$}\kern     
                -\expandafter\eeaa\the\fontdimen1\scriptfont1 \ht0 }
\def\accentadjscriptscriptback#1{\setbox0\hbox{$#1$}\kern 
                -\expandafter\eeaa\the\fontdimen1\scriptscriptfont1 \ht0 }
\def\itoverline#1{{\mathsurround0pt\mathchoice
        {\rlap{$\accentadjtext{\displaystyle #1}
                \accentadjtext{\vrule height1.593pt}
                \overline{\phantom{\displaystyle #1}
                \accentadjtextback{\displaystyle #1}}$}{#1}}
        {\rlap{$\accentadjtext{\textstyle #1}
                \accentadjtext{\vrule height1.593pt}
                \overline{\phantom{\textstyle #1}
                \accentadjtextback{\textstyle #1}}$}{#1}}
        {\rlap{$\accentadjscript{\scriptstyle #1}
                \accentadjscript{\vrule height1.593pt}
                \overline{\phantom{\scriptstyle #1}
                \accentadjscriptback{\scriptstyle #1}}$}{#1}}
        {\rlap{$\accentadjscriptscript{\scriptscriptstyle #1}
                \accentadjscriptscript{\vrule height1.593pt}
                \overline{\phantom{\scriptscriptstyle #1}
                \accentadjscriptscriptback{\scriptscriptstyle #1}}$}{#1}}}}
%
%
\newcommand{\de}{\delta}
\newcommand{\eps}{\varepsilon}
\newcommand{\Om}{\Omega}
\newcommand{\clOmprime}{{\overline{\Om}\mspace{1mu}}'}
\renewcommand{\phi}{\varphi}
\newcommand{\R}{\mathbf{R}}
\newcommand{\Rn}{{\R^n}}
\newcommand{\Q}{\mathbf{Q}}
%
%
%
%
%
\newcommand{\limminus}{{\mathchoice{\raise.17ex\hbox{$\scriptstyle -$}}
                {\raise.17ex\hbox{$\scriptstyle -$}}
                {\raise.1ex\hbox{$\scriptscriptstyle -$}}
                {\scriptscriptstyle -}}}
\newcommand{\limplus}{{\mathchoice{\raise.17ex\hbox{$\scriptstyle +$}}
                {\raise.17ex\hbox{$\scriptstyle +$}}
                {\raise.1ex\hbox{$\scriptscriptstyle +$}}
                {\scriptscriptstyle +}}}
%
%
\newcommand{\oHp}{H}                
\newcommand{\K}{\mathcal{K}}%
\newcommand{\ut}{\tilde{u}}
\newcommand{\ft}{\tilde{f}}
%
%
\numberwithin{equation}{section}
\newcommand{\eqv}{\mathchoice{\quad \Longleftrightarrow \quad}{\Leftrightarrow}
                {\Leftrightarrow}{\Leftrightarrow}}
\newcommand{\imp}{\mathchoice{\quad \Longrightarrow \quad}{\Rightarrow}
                {\Rightarrow}{\Rightarrow}}
\newenvironment{ack}{\medskip{\it Acknowledgement.}}{}
%
%
\newcommand{\px}{{\ensuremath{p(\cdot)}}}
\newcommand{\p}{\ensuremath{p(x)}} 
\newcommand{\pp}{{$p\mspace{1mu}$}}   
\newcommand{\pplus}{{p^\limplus}}
\newcommand{\pminus}{{p^\limminus}}
\newcommand{\Lpx}{{L^{p(\cdot)}}}
\newcommand{\Wpx}{W^{1,p(\cdot)}}
\newcommand{\Wpxloc}{W^{1,p(\cdot)}\loc}
\newcommand{\Cpx}{{C_{p(\cdot)}}}

\begin{document}

%
%
\authortitle{Tomasz Adamowicz, Anders Bj\"orn and Jana Bj\"orn}
{\px-superharmonic functions, the Kellogg property and semiregular points}
\title{Regularity of \px-superharmonic functions, the Kellogg property and semiregular boundary points}

\author{
Tomasz Adamowicz
\\
\it\small Department of Mathematics, Link\"opings universitet, \\
\it\small SE-581 83 Link\"oping, Sweden\/{\rm ;}
\it \small tomasz.adamowicz@liu.se
\\
\\
Anders Bj\"orn \\
\it\small Department of Mathematics, Link\"opings universitet, \\
\it\small SE-581 83 Link\"oping, Sweden\/{\rm ;}
\it \small anders.bjorn@liu.se
\\
\\
Jana Bj\"orn \\
\it\small Department of Mathematics, Link\"opings universitet, \\
\it\small SE-581 83 Link\"oping, Sweden\/{\rm ;}
\it \small jana.bjorn@liu.se
}

\date{} 
\maketitle

\noindent{\small
{\bf Abstract}.
We study various boundary and inner
regularity questions for $\px$-(super)har\-monic functions in Euclidean domains.
In particular, we prove the Kellogg property and introduce 
a classification of 
boundary points for $\px$-harmonic functions
into three disjoint classes: regular, semiregular and strongly irregular points. 
Regular and especially semiregular points are characterized in many ways.
The discussion is illustrated by examples.

Along the way, we present a removability result for bounded $\px$-harmonic 
functions and give some new characterizations of $W^{1, \px}_0$ spaces.
We also show that $\px$-superharmonic functions are lower semicontinuously regularized, and characterize them in terms of lower semicontinuously regularized
supersolutions.
}

\bigskip
\noindent
{\small \emph{Key words and phrases}: 
comparison principle,
Kellogg property, 
lsc-regularized,
nonlinear potential theory,
nonstandard growth equation,
obstacle problem,
\px-harmonic, 
quasicontinuous,
regular boundary point, 
removable singularity, 
semiregular point, 
Sobolev space,
strongly irregular point,
\px-superharmonic, 
\px-supersolution,
trichotomy, 
variable exponent.
} 

\medskip
\noindent
{\small Mathematics Subject Classification (2010):
Primary: 35J67;
Secondary: 31C45, 46E35.}


\section{Introduction}

The theory of partial differential equations with nonstandard growth has been 
a subject of increasing interest in the last decade. 
Several results known for the model elliptic differential operator of nonlinear 
analysis, the \pp-Laplacian $\Delta_p:=\dvg(|\grad u|^{p-2}\grad u)$, 
have been established in the variable exponent setting for 
the so-called $\px$-Laplace equation and some of its modifications. 
The $\px$-Laplace equation 
\[
\dvg(p(x)|\grad u|^{p(x)-2}\grad u) = 0
\] 
is the Euler--Lagrange equation for the minimization of the 
$\px$-Dirichlet integral 
\[
\int_\Om |\grad u|^{p(x)}\,dx
\]
among functions with given boundary data.
Such minimization problems and equations arise for instance from applications 
in image processing, see Chen--Levine--Rao~\cite{CLR}, and  in the description 
of electrorheological 
fluids, see Acerbi--Mingione~\cite{AM02} and
 R\r u\v zi\v cka~\cite{Ru}. 

Variable exponent equations have been studied, among others, in the context of 
interior regularity  of solutions, see e.g.\ Acerbi--Mingione~\cite{AM4}, 
Fan~\cite{Fan07} and Henriques~\cite{Hen08}, 
and from the point of view of geometric properties of the solutions, 
see e.g.\ Adamowicz--H\"ast\"o~\cite{AdH09}, \cite{AdH10}. 
Also, the nonlinear potential theory 
associated with variable exponent elliptic equations 
has recently attracted attention, see e.g.\  
Harjulehto--Kinnunen--Lukkari~\cite{HaKiLu07},  
Harjulehto--H\"ast\"o--Koskenoja--Lukkari--Marola~\cite{HHKLM07}, 
Latvala--Lukkari--Toivanen~\cite{LLT} and Lukkari~\cite{lukkari09}.
For a survey of recent results in the field we refer to 
Harjulehto--H\"ast\"o--L\^e--Nuortio~\cite{HHLN}. 

Despite the symbolic similarity to the \pp-Laplacian, various unexpected 
phenomena can occur when the exponent is a function, for instance the minimum 
of the $\px$-Dirichlet energy may not exist even in the one-dimensional case 
for smooth $p$, see~\cite[Section~3]{HHLN}, and smooth functions 
need not be dense in the
corresponding variable exponent Sobolev spaces, see the monograph 
by Diening--Harjulehto--H\"ast\"o--R\r{u}\v{z}i\v{c}ka~\cite[Chapter~9.2]{DHHR}.

 In this paper we address several questions regarding boundary regularity 
of $\px$-harmonic functions, i.e.\ the solutions of the $\px$-Laplace equation.
Our focus is on discussing various types of boundary points 
and on analyzing the structure of sets of such points.

A boundary point $x_0 \in \bdy \Om$ 
is \emph{regular} if
\[ 
         \lim_{\Om \ni y \to x_0} \oHp f(y)=f(x_0)
         \quad \text{for all } f\in C(\bdry\Om), 
\] 
where $\Om$ is a nonempty bounded open subset of $\R^n$ and
$\oHp f$ is the solution of the $\px$-Dirichlet problem 
with boundary values $f$.
(See later sections for notation and precise definitions.)

\begin{thm}\label{thm-kellogg}
\textup{(The Kellogg property)}
The set of all irregular boundary points 
has zero \px-capacity.
\end{thm}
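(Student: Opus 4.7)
My plan is to argue by contradiction, combining a reduction to countably many continuous boundary data with a capacitary/obstacle-problem construction. Suppose that $I\subset\bdy\Om$ denotes the set of irregular boundary points and that $\Cpx(I)>0$. Choose a dense sequence $\{f_k\}_{k=1}^\infty\subset C(\bdy\Om)$. Using the bound $|\oHp f-\oHp g|\le\|f-g\|_\infty$ (a standard consequence of the comparison principle applied to Perron upper and lower functions), $x_0$ is regular if and only if $\lim_{\Om\ni y\to x_0}\oHp f_k(y)=f_k(x_0)$ for every $k$. Hence $I=\bigcup_k I_k$, and by the countable subadditivity of $\Cpx$ it suffices to show $\Cpx(I_k)=0$ for each $k$.

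Fix $k$, write $f=f_k$ and $u=\oHp f$, and observe that $\oHp(-f)=-\oHp f$ (since the equation is invariant under $u\mapsto-u$). Thus it is enough to prove the one-sided statement
\[
\liminf_{\Om\ni y\to x_0} u(y)\ge f(x_0)\quad\text{for q.e.\ }x_0\in\bdy\Om.
\]
If this fails, then, extracting a rational pair $\alpha<\beta$ and applying countable subadditivity once more, the set
\[
E_{\alpha,\beta}=\Bigl\{x_0\in\bdy\Om:\liminf_{\Om\ni y\to x_0}u(y)<\alpha\text{ and }f(x_0)>\beta\Bigr\}
\]
has $\Cpx(E_{\alpha,\beta})>0$ for some such pair.

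The heart of the argument is then to produce a bounded \px-superharmonic function $v$ in $\Om$ with
\[
\liminf_{\Om\ni y\to x_0} v(y)\ge\beta\quad\text{for q.e.\ }x_0\in E_{\alpha,\beta},
\]
while $v\le\|f\|_\infty$ throughout $\Om$. I expect to obtain $v$ as the lower-semicontinuously regularized solution of an obstacle problem whose obstacle is built from a capacitary potential of a Borel subset of $E_{\alpha,\beta}$ of positive capacity. The lsc-regularization theorem for \px-superharmonic functions established earlier in the paper, together with the quasicontinuity of \Wpx-representatives and the new characterizations of $\Wpx_0$ advertised in the abstract, should then guarantee that $v$ has the desired pointwise boundary behaviour at q.e.\ point of $E_{\alpha,\beta}$. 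The comparison principle applied to $u$ and $v$ in $\Om$, combined with $f>\beta$ on $E_{\alpha,\beta}$ and the Perron characterization of $u$, would force $u\ge v$ in $\Om$, contradicting $\liminf u<\alpha<\beta$ on $E_{\alpha,\beta}$.

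The step I expect to be the main obstacle is the boundary-behaviour analysis of $v$: translating a capacity lower bound on $E_{\alpha,\beta}\subset\bdy\Om$ into a genuine pointwise lower bound for $v$ inside $\Om$ at q.e.\ boundary point. In the variable exponent setting this is delicate because smooth functions need not be dense in \Wpx, so the standard capacity and quasicontinuity machinery cannot be invoked verbatim; this is precisely where the lsc-regularization result and the $\Wpx_0$-characterizations developed earlier in the paper must carry the load.
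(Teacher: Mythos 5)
The paper's proof is quite different from what you sketch, and your proposal has a serious gap at its heart. The paper covers $\bdy\Om$ by balls $B_{j,k}$, builds Lipschitz bump functions $\phi_{j,k,q}=q\phi_{j,k}$ scaled by rationals $q$, and then glues $\oHp\phi_{j,k,q}$ in $\Om$ with the constant $q$ in $2B_{j,k}\setm\Om$; the pasting Lemma~\ref{lem-qsmin} shows the glued function is a quasicontinuous supersolution in $2B_{j,k}$, and Theorem~\ref{thm-u*} (lsc-regularization of quasicontinuous supersolutions agrees with them q.e.) then gives $\liminf_{\Om\ni y\to x}\oHp\phi_{j,k,q}(y)=q$ for q.e.\ $x\in \itoverline{B}_{j,k}\cap\bdy\Om$. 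The countable union argument then finishes the proof without ever invoking balayage, capacitary potentials, or the Wiener criterion. Your reduction to a dense countable family $\{f_k\}$ via the $\sup$-norm contraction and the use of $\oHp(-f)=-\oHp f$ is fine, but from that point on your plan diverges and runs into trouble.

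The central gap is the direction of your comparison. You propose to build $v$ as the lsc-regularized solution of an obstacle problem whose obstacle is a capacitary potential pushing $v$ \emph{up} near a positive-capacity subset of $E_{\alpha,\beta}$, and then to deduce $u\ge v$ from the comparison principle. But both the Dirichlet comparison principle (Lemma~\ref{lem-comp-principle}) and the obstacle-problem comparison principle (Lemma~\ref{lem-obst-le}) give dominance in the \emph{opposite} direction: a larger obstacle and larger boundary data yields a larger solution, so from $\psi\ge -\infty$ and a boundary datum of $v$ at most $f$ you can only conclude $v\ge\oHp\psi$, not $v\le\oHp f$. To have $u=\oHp f\ge v$ by comparison you would need $v$ to be a \emph{subsolution} with boundary data at most $f$ in the Sobolev sense, whereas the object you have constructed is superharmonic. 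Reversing the sign and building a subharmonic $v$ does not rescue the plan either, because then the obstacle pushes the wrong way and you lose the large boundary $\liminf$ you need. Finally, even setting this aside, the step you identify as delicate --- converting $\Cpx(E_{\alpha,\beta})>0$ into a genuine $\liminf$ lower bound for $v$ from inside $\Om$ at q.e.\ boundary point of $E_{\alpha,\beta}$ --- is precisely where the paper's gluing construction is doing the real work, and you have not supplied a replacement. The lsc-regularization theorem alone controls $\essliminf$ over $2B_{j,k}$, not the one-sided limit $\Om\ni y\to x_0$; the constant extension in $2B_{j,k}\setm\Om$ with value $q$ is what allows the paper to identify the two. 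This is the idea from Bj\"orn--Bj\"orn--Shanmugalingam that the paper adapts, and its absence leaves your proof incomplete.
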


The Kellogg property for variable exponents was recently obtained 
by Latvala--Lukkari--Toivanen~\cite{LLT}
using balayage 
and the Wiener criterion 
(the latter being due to Alkhutov--Krasheninnikova~\cite[Theorem~1.1]{Alkhutov-K}.)
Here we provide a shorter and more elementary proof, 
which in particular does not depend on the Wiener criterion.
It is based on the ideas introduced by 
Bj\"orn--Bj\"orn--Shanmugalingam~\cite{BBS}
for their proof of the Kellogg property in metric spaces 
(with constant $p$).
The proof in \cite{BBS} is based on Newtonian-type Sobolev spaces,
but
here we have refrained from the Newtonian approach and only use the usual 
variable exponent Sobolev spaces.
Our proof may therefore be of interest also in the constant $p$ case,
for readers who prefer to avoid Newtonian spaces.

That a boundary point is regular  can be rephrased 
in the following way.
A point $x_0 \in \bdy \Om$ is regular if the following two conditions hold:
\begin{enumerate}
\item \label{semi-intro}
for all $f \in C(\bdy \Om)$ the limit
\begin{equation} \label{eq-semi}
     \lim_{\Om \ni y \to x_0} \oHp f(y)
        \quad \text{exists};
\end{equation}
\item \label{strong-intro}
for all $f \in C(\bdy \Om)$ there is
a sequence $\{y_j\}_{j=1}^\infty$ such that
\begin{equation} \label{eq-strong}
\Om \ni y_j \to x_0 \text{ and } \oHp f(y_j) \to f(x_0),
\quad \text{as } j \to \infty.
\end{equation}
\end{enumerate}
\bigskip
%
It turns out that for irregular boundary points
\emph{exactly one} of these two properties holds,
i.e.\ it can never happen that both fail.
This is the content of the following theorem.
We say that  $x_0 \in \bdy \Om$ is 
\emph{semiregular} if \ref{semi-intro}
holds but not \ref{strong-intro},
and \emph{strongly irregular}
if \ref{strong-intro}
holds but not \ref{semi-intro}.

\begin{thm} \label{thm-trichotomy}
\textup{(Trichotomy)}
A boundary point $x_0 \in \bdy \Om$
is either regular, semiregular or 
strongly irregular.
\end{thm}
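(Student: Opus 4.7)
Assume $x_0 \in \bdy\Om$ is not regular. The plan is to split into two cases depending on whether the nearby boundary carries positive $\px$-capacity away from $x_0$, and to show that the two cases correspond respectively to strong irregularity and semiregularity.

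First I would treat the case where $\Cpx(B(x_0,r) \cap \bdy \Om \setm \{x_0\}) > 0$ for every $r > 0$. Here the Kellogg property (Theorem~\ref{thm-kellogg}) guarantees that the set of irregular boundary points has zero $\px$-capacity, so for each $k$ the piece $B(x_0, 1/k) \cap \bdy\Om \setm \{x_0\}$ cannot be entirely irregular and therefore contains a regular point $\xi_k$. Given any $f \in C(\bdy\Om)$, the regularity of $\xi_k$ lets me pick $y_k \in \Om$ with $|y_k - \xi_k| < 1/k$ and $|\oHp f(y_k) - f(\xi_k)| < 1/k$. Then $y_k \to x_0$ and, by continuity of $f$, $\oHp f(y_k) \to f(x_0)$. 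This verifies condition~\ref{strong-intro} for every $f$, so $x_0$ is strongly irregular.

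In the complementary case, there is some $r_0 > 0$ with $\Cpx(B(x_0, r_0) \cap \bdy\Om \setm \{x_0\}) = 0$. Since $x_0$ is itself irregular, the Kellogg property also forces $\Cpx(\{x_0\}) = 0$, so the whole set $E := B(x_0, r_0) \cap \bdy\Om$ has zero $\px$-capacity. For any $f \in C(\bdy\Om)$ the function $\oHp f$ is bounded on $\Om$ by the maximum principle. I would then invoke the paper's removability result for bounded $\px$-harmonic functions to extend $\oHp f$ as a $\px$-harmonic function across $E$ to the open set $\Om \cup B(x_0, r_0)$; being $\px$-harmonic (hence continuous) there, the extension gives existence of $\lim_{\Om \ni y \to x_0} \oHp f(y)$ for every $f$, which combined with non-regularity yields semiregularity.

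The main obstacle is the topological step in the second case: to apply removability on an open set containing $B(x_0, r_0)$, I must first check that $B(x_0, r_0) \subseteq \overline\Om$, i.e.\ that no open piece of $\Rn \setm \overline\Om$ is trapped between $\Om$ and $x_0$ inside $B(x_0, r_0)$. This reduces to the standard fact that a set of zero $\px$-capacity cannot disconnect a ball, since it has empty interior and carries no codimension-one obstruction. Once this geometric reduction is in place the extension via removability is immediate and the trichotomy follows.
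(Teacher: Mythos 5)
Your proposal is correct and follows essentially the same route as the paper's own proof: a dichotomy based on whether $\Cpx(B(x_0,r)\cap\bdy\Om)$ vanishes for some small $r$, using the Kellogg property to produce a nearby regular point in the positive-capacity case (hence strong irregularity), and the removability theorem to extend $\oHp f$ across the boundary in the zero-capacity case (hence semiregularity). Your variant that first excises $\{x_0\}$ and then recovers its zero capacity from the Kellogg property for irregular $x_0$ reduces to the paper's split, and your handling of the topological step $B(x_0,r_0)\subset\overline\Om$ is precisely what the paper's Lemmas~\ref{lem-connected} and~\ref{lem-Cp-zero} make rigorous.
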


The first example (for $p=2$) of an irregular boundary point was given by 
Zaremba~\cite{zaremba} in 1911, in which he showed that the centre of
a punctured disk is irregular. 
This is an example of a semiregular point.
Shortly afterwards, Lebesgue~\cite{lebesgue1912} presented his famous
Lebesgue spine, whose tip is a
strongly irregular point
(see e.g.\ Remark~6.6.17 in 
Armitage--Gardiner~\cite{AG}).

In the linear case  the trichotomy was developed
in detail in Luke\v{s}--Mal\'y~\cite{lukesmaly} (in an axiomatic setting),
whereas  in the nonlinear constant-$p$ case
it was first stated by A.~Bj\"orn~\cite{ABclass} who obtained 
it in metric spaces and also for quasiminimizers.
As in \cite{ABclass}, there are two main ingredients needed to
obtain the trichotomy in the variable exponent case: 
the Kellogg property above and the following 
new removability result.

\begin{thm} \label{thm-removability-qharm-intro}
Let $F \subset \Om$ be relatively closed and such that $\Cpx(F)=0$.
If $u$ is a bounded \px-harmonic function in\/ $\Om \setm F$,
then it has a unique \px-harmonic extension to\/ $\Om$.
\end{thm}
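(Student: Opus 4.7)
The plan is to construct a candidate extension $\ut$ of $u$ to all of $\Om$, verify that it lies in $\Wpxloc(\Om)$ and solves the $\px$-Laplace equation weakly, and then deduce uniqueness from continuity of $\px$-harmonic functions. Since $\Cpx(F)=0$ implies that $F$ has Lebesgue measure zero, the extension may be set to any bounded value on $F$ (say $\ut=0$ on $F$ and $\ut=u$ on $\Om\setm F$) without affecting the a.e.\ behaviour, and $\ut$ inherits the bound $\|u\|_{L^\infty}$.

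First I would show that $\ut \in \Wpxloc(\Om)$. The key input is the Caccioppoli inequality for bounded $\px$-harmonic functions on $\Om \setm F$, combined with capacity cutoffs: by $\Cpx(F)=0$ there exist $\eta_j \in \Wpx_0(\Om)$ with $0 \le \eta_j \le 1$, $\eta_j=1$ quasieverywhere on $F$, and $\eta_j \to 0$ in $\Wpx$. For a ball $B\Subset\Om$ and a standard cutoff $\psi\in\Lipc(2B)$ with $\psi\equiv 1$ on $B$, the function $\psi(1-\eta_j)$ is an admissible test function for the equation on $\Om\setm F$. Applying Caccioppoli with this test function, using boundedness of $u$, and passing to the limit $j\to\infty$ via Fatou's lemma yields $\int_B |\grad u|^\px\,dx<\infty$. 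To then conclude that $\ut$ itself lies in $\Wpxloc(\Om)$ with weak gradient $\chi_{\Om\setm F}\grad u$, I would invoke one of the new characterizations of $\Wpx_0$ announced in the abstract, which expresses in particular that sets of zero $\px$-capacity are removable for Sobolev membership.

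Next I would verify that $\ut$ solves the equation weakly on $\Om$. For an arbitrary $\phi\in\Lipc(\Om)$, the truncated test function $\phi(1-\eta_j)$ lies in $\Wpx_0(\Om\setm F)$, so by $\px$-harmonicity of $u$ on $\Om\setm F$,
\begin{equation*}
\int_\Om p(x)|\grad \ut|^{\px-2}\grad \ut \cdot \grad\bigl(\phi(1-\eta_j)\bigr)\,dx = 0.
\end{equation*}
Expanding the gradient produces two terms: one with factor $(1-\eta_j)$, handled by dominated convergence, and one with factor $\phi\grad\eta_j$, which vanishes in the limit by H\"older's inequality since $\grad\eta_j\to 0$ in $\Lpx$ while $|\grad\ut|^{\px-1}$ is locally in the dual exponent space. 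Hence $\ut$ is a weak solution on $\Om$ and, by interior regularity results for variable exponent equations (see e.g.\ \cite{AM4}, \cite{Fan07}), has a continuous representative which is the sought $\px$-harmonic extension. Uniqueness is then immediate: any two $\px$-harmonic extensions agree with $u$ on the open dense set $\Om\setm F$ and are continuous on $\Om$, hence coincide everywhere.

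The main obstacle is the Sobolev step above, namely showing that the trivial extension really belongs to $\Wpxloc(\Om)$ with the expected weak gradient. In the constant-exponent setting this is routine via smooth approximation, but the introduction already emphasises that smooth functions need not be dense in $\Wpx$, so mollification cannot simply be invoked. The argument must instead be grounded in an intrinsic characterization of $\Wpx_0$ and in quasicontinuity of $\Wpx$-representatives. Ensuring that the capacity-zero cutoffs $\eta_j$ interact correctly with the truncated test function $\psi(1-\eta_j)$---in particular that $\psi(1-\eta_j)$ really is admissible on $\Om\setm F$ even though $\eta_j$ equals $1$ only quasieverywhere on $F$---is the delicate point.
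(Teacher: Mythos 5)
Your proposal takes a genuinely different route from the paper's. The paper does not work directly with the weak form of the equation for $u$; instead it proves a removability theorem for bounded \emph{superharmonic} functions (Theorem~\ref{thm-removability-q}), applies it to both $u$ and $-u$ to get a superharmonic extension $U$ and a subharmonic extension $V$, and then uses Proposition~\ref{prop-rem-imp} and the characterization of bounded superharmonic functions as lsc-regularized supersolutions (Theorem~\ref{thm-superh-char-bdd}) to conclude that $U=V$ is \px-harmonic. This buys the superharmonic removability statement as a separate result, which is needed elsewhere (Section~\ref{sect-semi}). Your direct route is more elementary in that it avoids the whole Section~\ref{sect-superh} machinery, and its computational core (localized Caccioppoli with capacity cutoffs, then passing to the limit in the weak formulation) is essentially identical to what the paper carries out inside the proof of Theorem~\ref{thm-removability-q}; so the two approaches are close in substance even though they are packaged quite differently.

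That said, two points in your argument need repair. First, where you conclude that $\ut\in\Wpxloc(\Om)$ with weak gradient $\chi_{\Om\setm F}\grad u$ by ``invoking one of the new characterizations of $\Wpx_0$ announced in the abstract,'' there is in fact no such ready-made removability-for-Sobolev-membership lemma in the paper; the characterizations there (Lemma~\ref{lem-Wpx0-qe-on-bdry}, Proposition~\ref{prop-Wpx0}, Lemma~\ref{lem-police}) are about $\Wpx_0$ and zero boundary values. The paper proves the distributional gradient identity directly: after the Caccioppoli estimate shows $\grad u\in\Lpx\loc(\Om)$, it integrates $u\grad(\eta\phi_j)+\eta\phi_j\grad u$ against one, splits off the term $\int u\eta\grad\phi_j$, and shows it vanishes via H\"older and Lemma~\ref{lem-seq}. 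You would need to supply precisely this step. Second, choosing cutoffs $\eta_j\in\Wpx_0(\Om)$ with $\eta_j=1$ only quasieverywhere on $F$ indeed creates the admissibility problem you flag: $\psi(1-\eta_j)$ is then not compactly supported in $\Om\setm F$, and you would need Lemma~\ref{lem-Wpx0-qe-on-bdry} together with a quasicontinuity argument to place it in $\Wpx_0(\Om\setm F)$. The paper sidesteps this entirely: Lemma~\ref{lem-seq} produces $C^\infty(\Rn)$ cutoffs $\phi_j$ with $\phi_j\equiv 0$ on an open neighbourhood of the compact set $K=F\cap\supp\eta$, so $\eta\phi_j\in C_0^\infty(\Om\setm F)$ on the nose. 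You could adopt the same choice and eliminate the ``delicate point'' you worry about at the end.
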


Here and in Theorem~\ref{thm-superh-char-bdd},
$\Om$ is allowed to be unbounded.

The paper is organized as follows. 
In Section~\ref{sect-prelim} 
we recall some of the basic definitions and theorems from the 
theory of variable exponent Sobolev spaces, as well as potential theory. 
We also observe that some of the characterizations of the $\px$-Sobolev spaces 
with zero boundary data discussed in~\cite{DHHR} can be improved, and these 
improvements turn out useful for our later results. 
We also discuss the ``squeezing'' Lemma~\ref{lem-police} for variable exponent 
Sobolev spaces with zero boundary values, which to our best knowledge was not 
known or formulated in the literature so far. 

In Section~\ref{sect-supersoln} we discuss \px-supersolutions and the obstacle problem in the variable 
exponent setting. 
We discuss the existence and uniqueness  of solutions to the obstacle problem 
and their regularized representatives.
In addition, we obtain comparison principles for Dirichlet and 
obstacle problems, see Lemmas~\ref{lem-comp-principle} and~\ref{lem-obst-le}.

Section~\ref{sect-superh} is devoted to studying \px-superharmonic functions. 
Although this notion is well known, also in the $\px$-setting, we establish 
the following
new characterization of bounded $\px$-superharmonic functions.

\begin{thm} \label{thm-superh-char-bdd}
Assume that $u:\Om \to \R$ is locally bounded from above in\/ $\Om$.
Then $u$ is \px-superharmonic if and only if it is an
lsc-regularized \px-supersolution.
\end{thm}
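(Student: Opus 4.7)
The theorem is an equivalence, and I would split the proof accordingly. For the direction ($\Leftarrow$), assume $u$ is an lsc-regularized $\px$-supersolution. Being lsc-regularized, $u$ is lower semicontinuous. To verify $\px$-superharmonicity it suffices to check the comparison property: if $G\Subset\Om$ is open, $h$ is $\px$-harmonic in $G$, continuous up to $\bdry G$, and $h\le u$ on $\bdry G$, then $h\le u$ in $G$. Testing $(h-u-\eps)_+\in\Wpx_0(G)$ against the supersolution inequality for $u$ and the equation for $h$ and applying the comparison principle in Lemma~\ref{lem-comp-principle}, one obtains $h\le u+\eps$ a.e.\ in $G$. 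Since $u$ equals its lsc-regularization, this upgrades to a pointwise inequality, and letting $\eps\to 0$ yields the claim.

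For ($\Rightarrow$), assume $u$ is $\px$-superharmonic and locally bounded from above. Fix $\Om'\Subset\Om$, set $M:=\sup_{\Om'}u<\infty$, and use the lower semicontinuity of $u$ to choose a nondecreasing sequence of bounded Lipschitz functions $\psi_j\nearrow u$ on $\Om'$. Let $v_j$ be the lsc-regularized solution of the $\K_{\psi_j,\psi_j}$-obstacle problem in $\Om'$, whose existence and uniqueness are established in Section~\ref{sect-supersoln}. Each $v_j$ is a bounded $\px$-supersolution with $\psi_j\le v_j\le M$. The comparison principle for obstacle problems (Lemma~\ref{lem-obst-le}) yields $v_j\le v_{j+1}\le u$ in $\Om'$, where the inequality $v_j \le u$ uses that the superharmonic $u$ itself is admissible as a competitor with obstacle $\psi_j$. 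Thus the $v_j$ increase to some $v\le u$.

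The main task is to pass to the limit in the supersolution inequality for the $v_j$. Testing that inequality with $v_j-\psi_j\in\Wpx_0(\Om')$ and using the uniform bound $\psi_j\le v_j\le M$ together with the $W^{1,\infty}$-control of $\psi_j$ gives a uniform bound on $\int_{\Om'}|\grad v_j|^{p(x)}\,dx$. By reflexivity of the $\px$-Sobolev space, a subsequence converges weakly in $\Wpx(\Om')$, and by monotone convergence the pointwise a.e.\ limit is $v$. A Mazur-type convexity argument on convex combinations of $\grad v_j$, standard in nonlinear potential theory since strong convergence of gradients is not automatic, allows passing to the limit in the nonlinear supersolution inequality and concluding that $v$ is a $\px$-supersolution in $\Om'$. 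Since $v\ge\psi_j\nearrow u$ a.e., combined with $v\le u$, we get $v=u$ a.e.\ in $\Om'$; hence $u\in\Wpxloc(\Om)$ is a $\px$-supersolution. Finally, $u$ is lower semicontinuous and agrees a.e.\ with the supersolution $v$, so by uniqueness of the lsc-regularization $u$ coincides with it.

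The main obstacle is the limit passage in the nonlinear supersolution inequality: in the variable-exponent setting smooth functions need not be dense in $\Wpx$ (the Lavrentiev phenomenon noted in the introduction) and strong convergence of the gradients $\grad v_j$ is not accessible by routine arguments, so one is forced to pass to the limit through weak convergence together with convexity. The comparison step and the obstacle-problem approximation, by contrast, follow the well-understood constant-$p$ template and rely on the preparatory material already developed in Sections~\ref{sect-prelim} and~\ref{sect-supersoln}.
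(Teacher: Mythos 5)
Your proposal departs from the paper's proof, which is essentially a two-line citation: the forward direction combines Theorem~\ref{thm-superh-lsc-reg} (superharmonic $\Rightarrow$ lsc-regularized) with Corollary~6.6 of~\cite{HHKLM07} (locally bounded superharmonic $\Rightarrow$ supersolution), and the backward direction is Theorem~6.1 of~\cite{HHKLM07}. You instead try to re-derive both implications from scratch, which is a legitimate ambition, but the attempt has genuine gaps in the forward direction.

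First, the inequality $v_j\le u$. You justify it by saying that ``the superharmonic $u$ itself is admissible as a competitor with obstacle $\psi_j$,'' invoking Lemma~\ref{lem-obst-le}. But admissibility for the $\K_{\psi_j,\psi_j}$-obstacle problem requires $u\in\Wpx(\Om')$ with $u-\psi_j\in\Wpx_0(\Om')$, and membership in $\Wpxloc$ is exactly what you are trying to prove. At this point in the argument $u$ is only known to be lsc and locally bounded; Lemma~\ref{lem-obst-le} compares two obstacle-problem solutions and does not apply. The correct route is to use the comparison property in Definition~\ref{def-superharm}\,(iii) directly: $v_j$ is continuous and $\px$-harmonic on the open set $\{v_j>\psi_j\}$, on whose (relative) boundary $v_j=\psi_j\le u$, and then the superharmonicity of $u$ forces $v_j\le u$ there; on $\{v_j=\psi_j\}$ the inequality is trivial. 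This circles back to the comparison property rather than the obstacle-problem comparison lemma.

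Second, the Caccioppoli bound. You test the supersolution inequality for $v_j$ with $v_j-\psi_j\in\Wpx_0(\Om')$ and claim a uniform bound on $\int_{\Om'}|\grad v_j|^{p(x)}\,dx$. That test yields a bound controlled by $\int_{\Om'}|\grad\psi_j|^{p(x)}\,dx$, and the Lipschitz constants of $\psi_j$ are \emph{not} uniformly bounded --- they blow up as $\psi_j\nearrow u$ unless $u$ happens to be Lipschitz. What one actually needs is the Caccioppoli estimate for \emph{uniformly bounded} supersolutions, using a cutoff function $\eta$ and the uniform $L^\infty$ bound $\psi_1\le v_j\le M$ instead of the gradients of the obstacles; this is the kind of estimate used in the proof of Theorem~\ref{thm-removability-q} in the paper, and only gives local gradient bounds, which suffice.

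Third, the final step. You conclude that $u$ is lsc-regularized from ``$u$ is lower semicontinuous and agrees a.e.\ with the supersolution $v$, so by uniqueness of the lsc-regularization $u$ coincides with it.'' Lower semicontinuity gives $u\le u^*$ pointwise, but the reverse inequality $u\ge u^*$ is not automatic from being a supersolution that is also lsc. This is precisely the content of Theorem~\ref{thm-superh-lsc-reg}, whose proof in the paper relies on a weak Harnack inequality from~\cite{HaKiLu07} with a nontrivial argument to make the exponent $q$ independent of the scale $R$. Without that input, your proof of the forward implication does not establish lsc-regularization of $u$. You would either have to import Theorem~\ref{thm-superh-lsc-reg} (as the paper does) or reproduce its weak Harnack argument.
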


For unbounded \px-superharmonic functions
we obtain a similar (but necessarily more involved)
characterization in Theorem~\ref{thm-superh-supermin-char}.

Section~\ref{sect-cont-bdy-values} is devoted to the Kellogg property,
whereas the 
removability result (Theorem~\ref{thm-removability-qharm-intro} above)
is obtained in Section~\ref{sect-remove}.
In the latter section  we also obtain a similar removability
result for bounded \px-superharmonic functions.
Lukkari~\cite{lukkari09} studied removability for 
unbounded \px-harmonic functions, our results are however not included in
his treatment.

In Section~\ref{sect-bdry-reg} we obtain the
trichotomy (Theorem~\ref{thm-trichotomy}) and 
also provide a number of characterizations of regular points.
In the last section we 
focus on semiregularity and give several characterizations
both of semiregular points themselves  and of sets of semiregular points,
involving capacity and  \px-harmonic and \px-superharmonic extensions.
In particular, we show that semiregularity is a local property. 
A similar result for regular 
points is a direct
consequence of the Wiener
criterion.
It would be interesting to obtain the locality for regular 
(and thus also for strongly irregular)  points more 
directly, without appealing to the Wiener criterion.
Let us again stress the fact that 
we do not use the Wiener criterion in this paper, except
for constructing a few examples in Example~\ref{ex-punctured-ball} 
and Propositions~\ref{prop-ball-minus-K}  and~\ref{prop-K1-K2}.

\begin{ack}
A.~B. and J.~B. were supported by the Swedish Research Council.
\end{ack}

\section{Preliminaries}
\label{sect-prelim}

A \emph{variable exponent} is 
a measurable function $p\colon \R^n\to [1,\infty]$.
In this paper we assume that
\[
     1 < \pminus \le \pplus < \infty,
     \quad \text{where }\quad \pminus=\essinf_{\Rn} p \quad
     \text{ and }\quad \pplus=\esssup_{\Rn} p,
\]
and that $p$ is log-H\"older continuous, i.e.\ 
there is a constant
$L>0$ such that
\begin{equation}  \label{eq-def-log-Holder}
|p(x)-p(y)|\leq \frac{L}{\log(e+1/|x-y|)}
 \quad \text{for } x,y\in \Rn.
\end{equation}
In addition, one usually assumes that $p$ satisfies the log-H\"older 
decay condition 
(see Definition~4.1.1 and the discussion in Chapter~4.1 in 
Diening--Harjulehto--H\"ast\"o--R\r u\v zi\v cka~\cite{DHHR}). 
However, 
for the results in this paper no decay condition is required.
We also assume throughout the paper that
$\Omega\subset\R^n$ is a nonempty open set. 
(In Sections~\ref{sect-cont-bdy-values}--\ref{sect-semi} as well as
in the second half of Section~\ref{sect-supersoln}
we will further assume that $\Om$ is bounded.)
For background on variable exponent function spaces we refer to 
\cite{DHHR}.

The \emph{variable
exponent Lebesgue space $L^{p(\cdot)}(\Omega)$} consists of all
measurable functions $u\colon \Omega\to \R$ for which the so-called Luxemburg norm
\[
\|u\|_{L^{p(\cdot)}(\Omega)}:= \inf\biggl\{\lambda > 0\,\colon\,
\int_{\Omega} \left|\frac{u(x)}{\lambda}\right|^{p(x)}dx\leq 1\biggr\}
\]
is finite.
Equipped with this norm, $L^{p(\cdot)}(\Omega)$ becomes a Banach space.
The variable exponent Lebesgue space is a special case of a
Musielak--Orlicz space. For a constant function $p$, it coincides
with the standard Lebesgue space. 

One of the difficulties when extending results from the constant to 
variable exponent setting is the lack of functional 
relationship between the norm and the integral. 
Nevertheless,  we 
do have the following useful estimates 
\begin{equation}\label{norm-mod-le-1}
\left(\int_{\Omega} |u(x)|^{p(x)}\,dx\right)^{1/\pminus} \leq \| u\|_{L^{p(\cdot)}(\Omega)} 
\leq \left(\int_{\Omega} |u(x)|^{p(x)}\,dx\right)^{1/\pplus}
\end{equation}
whenever $\int_{\Omega} |u(x)|^{p(x)}\,dx \le 1$.
For a proof and further discussion we refer to Lemmas~3.2.4
and~3.2.5 in  Diening--Harjulehto--H\"ast\"o--R\r u\v zi\v cka~\cite{DHHR}.
Note also, that if $\{u_i\}_{i=1}^{\infty}$ is a sequence of 
$L^{p(\cdot)}(\Omega)$-integrable functions, 
then from \eqref{norm-mod-le-1} we infer that
\begin{equation}\label{mod-norm-zero-conv}
\lim_{i\to \infty}\int_{\Omega} |u_i(x)|^{p(x)}\,dx=0 \eqv
 \lim_{i\to \infty} \| u_i\|_{L^{p(\cdot)}(\Omega)}=0.
\end{equation}

If $p\ge q$
are variable exponents, then $L\loc^{p(\cdot)}(\Om)$
embeds into $L\loc^{q(\cdot)}(\Om)$. 
In particular, 
every function in $L\loc^{p(\cdot)}(\Omega)$ also belongs to
$L\loc^{\pminus}(\Omega)$ (see Theorem~3.3.1 and the discussion in 
Section~3.3 in \cite{DHHR}). 
The H\"older inequality takes the form
\begin{equation}\label{Holder-ineq}
\int_\Omega u v \,dx 
   \le 2 \, \|u\|_{L^{p(\cdot)}(\Omega)} \|v\|_{L^{p'(\cdot)}(\Omega)},
\end{equation}
where $p'(\cdot)$ is the pointwise \textit{conjugate exponent}, i.e.\ $1/p(x)
+1/p'(x)\equiv 1$, (see Lemma~3.2.20 in \cite{DHHR}).

The \emph{variable exponent Sobolev space $W^{1,p(\cdot)}(\Omega)$}
consists of all $u\in L^{p(\cdot)}(\Omega)$ whose
distributional gradient $\nabla u$ also belongs to
$L^{p(\cdot)}(\Omega)$. The space
$W^{1,p(\cdot)}(\Omega)$ is a Banach space with the norm
\begin{displaymath}
\|u\|_{W^{1, p(\cdot)}(\Omega)}
     =\|u\|_{L^{p(\cdot)}(\Omega)}+\|\nabla u\|_{L^{p(\cdot)}(\Omega)}.
\end{displaymath}
In general, smooth functions are not dense in 
$\Wpx(\R^n)$ but the log-H\"older condition~\eqref{eq-def-log-Holder} 
guarantees that they are,
see Theorem~9.2.2 in \cite{DHHR}  and the discussion following it. 
We refer to Chapter~9 in~\cite{DHHR} 
for a detailed discussion of this topic.

\begin{deff}
 The \emph{\textup{(}Sobolev\/\textup{)} $\px$-capacity} of a set
$E\subset\R^n$ is defined as
\[
 C_{\px}(E):=\inf_{u}\int_{\R^n} (|u|^{\p}+|\nabla u|^{\p})\, dx,
\]
where the infimum is taken over all 
$u\in W^{1,\px}(\R^n)$ such that $u\geq 1$  in a neighbourhood of  $E$.
\end{deff}

The $\px$-capacity enjoys similar properties as 
in the constant case, see Theorem~10.1.2 in~\cite{DHHR}.
We say that a claim holds \emph{quasieverywhere} (\emph{q.e.} for short) 
if it holds everywhere except for a set with $\px$-capacity zero.

\begin{deff}
 A function $u:\Om\rightarrow [-\infty,\infty]$ is 
\emph{quasicontinuous} if for every $\varepsilon>0$ there exists 
an open set $U\subset \R^n$ with $C_{\px}(U)<\varepsilon$ 
such that $u|_{\Om\setminus U}$ is real-valued and continuous.
\end{deff}

Since $\Cpx$ is an outer capacity (which
follows directly from the definition) it is easy to show that
if $u$ is quasicontinuous and $v=u$ q.e., 
then $v$ is also quasicontinuous. 

 The following lemma sheds more light on quasicontinuous functions. It was 
obtained by Kilpel\"ainen for general capacities satisfying
two axioms, both of which are easily verified for the $\px$-capacity.

\begin{lem}\textup{(Kilpel\"ainen~\cite{Kil})} \label{lem-quasicont-ae-qe} 
If $u$ and $v$ are quasicontinuous in\/ $\Om$ and $u=v$ a.e.\ in\/ $\Om$,
then $u=v$ q.e.\ in\/ $\Om$.
\end{lem}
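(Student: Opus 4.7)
The plan is to invoke Kilpel\"ainen's general result~\cite{Kil}, as the sentence preceding the lemma already signals. His theorem is formulated for abstract capacities on $\Rn$ satisfying two axioms: countable subadditivity, and outer regularity (every set is approximated from outside by open sets of nearly the same capacity). Both are standard properties of the Sobolev $\px$-capacity, so the lemma follows once they are verified.

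For countable subadditivity, $\Cpx\bigl(\bigcup_i E_i\bigr)\le\sum_i \Cpx(E_i)$, the key point is that if $u_i\in\Wpx(\Rn)$ is admissible for $E_i$ with energy less than $\Cpx(E_i)+2^{-i}\varepsilon$, then after truncation at~$1$ the supremum $\sup_i u_i$ is admissible for $\bigcup_i E_i$, and the modular estimates \eqref{norm-mod-le-1}--\eqref{mod-norm-zero-conv} control its $\Wpx(\Rn)$-energy in terms of the sum. This property is recorded in Theorem~10.1.2 of~\cite{DHHR}, to which I would defer.

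For outer regularity, $\Cpx(E)=\inf\{\Cpx(V):V\supset E\text{ open}\}$, this is essentially built into the definition of $\Cpx$: each admissible function $u$ for $E$ satisfies $u\ge 1$ on some \emph{neighbourhood} of $E$, hence is automatically admissible for a genuine open superset $V$ of $E$. The infimum in the definition of $\Cpx(E)$ is therefore already an infimum over open supersets, giving outer regularity directly.

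With these two axioms at our disposal, Kilpel\"ainen's theorem applies verbatim and yields $u=v$ quasieverywhere from the hypothesis $u=v$ almost everywhere. The genuinely hard part of the argument is Kilpel\"ainen's original proof itself, which combines a careful analysis of quasi-open sets with a Lebesgue density argument to bridge the gap between ``a.e.''~and ``q.e.''~(the obstacle being that the closure of the bad open set from quasicontinuity may have strictly larger capacity than that set itself); my plan is simply to cite~\cite{Kil} for this step rather than reprove it, since the conclusion is a direct application of Kilpel\"ainen's result once the axioms above have been checked.
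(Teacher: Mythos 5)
Your proposal matches the paper's own treatment exactly: the paper simply notes that Kilpel\"ainen's theorem holds for abstract capacities satisfying two axioms (countable subadditivity and outer regularity), both of which are readily checked for $\Cpx$, and then cites~\cite{Kil}. Your verification of the two axioms and deferral to Kilpel\"ainen for the substantive argument is precisely what the paper intends.
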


Following Definition~8.1.10 in 
Diening--Harjulehto--H\"ast\"o--R\r{u}\v{z}i\v{c}ka~\cite{DHHR}
we define \emph{the Sobolev space $W_0^{1,\px}(\Omega)$ with zero
boundary values} as the closure 
in $W^{1,\px}(\Omega)$ of $\Wpx(\R^n)$-functions with compact support in $\Om$.
By Proposition~11.2.3 in~\cite{DHHR}, this is equivalent to taking the
closure of $C_0^\infty(\Om)$ in $\Wpx(\Om)$.

In the rest of this section we give several useful characterizations
of $\Wpx_0(\Om)$ which will be needed later and do not seem to be
anywhere else in the literature.
The following result improves upon Theorem~11.2.6 in~\cite{DHHR},
where the same conclusion is obtained if $u\in\Wpx(\R^n)$
and $u=0$ q.e.\ in $\R^n\setm\Om$.

\begin{lem}   \label{lem-Wpx0-qe-on-bdry}
If $u\in\Wpx(\Om)$ is quasicontinuous in\/ $\R^n$  and $u=0$ q.e.\ on 
$\bdry\Om$, then $u\in\Wpx_0(\Om)$.
\end{lem}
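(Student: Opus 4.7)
My plan is to show directly that $u$ can be approximated in $\Wpx(\Om)$ by functions with compact support in $\Om$, whence by Proposition~11.2.3 in~\cite{DHHR} it belongs to $W^{1,\px}_0(\Om)$. Let $\ut$ denote the zero-extension of $u$ to $\R^n$; I will exploit throughout both the quasicontinuity of $\ut$ in $\R^n$ and the hypothesis $u=0$ q.e.\ on $\bdry\Om$. This avoids having to first verify that $\ut\in\Wpx(\R^n)$ (which is what would let one apply Theorem~11.2.6 of \cite{DHHR} directly).

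Two preliminary reductions. Truncating $u^{(M)}:=\max(-M,\min(u,M))$ and using dominated convergence in the modular, then passing to the norm via \eqref{mod-norm-zero-conv}, reduces matters to bounded $u$; each truncation preserves quasicontinuity and the q.e.\ vanishing on $\bdry\Om$. If $\Om$ is unbounded, a smooth cutoff $\phi_R\in C_0^\infty(\R^n)$ with $\phi_R\equiv 1$ on $B(0,R)$ further reduces to the case when $\ut$ has bounded support in $\R^n$.

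Now fix $\tau,\de>0$. Set $u_\tau := \max(u-\tau,0)-\max(-u-\tau,0)$, so that $|u-u_\tau|\le\tau$ and $\grad u_\tau = \grad u\cdot\chi_{\{|u|>\tau\}}$; dominated convergence gives $u_\tau\to u$ in $\Wpx(\Om)$ as $\tau\to 0$. Since the set $N:=\{x\in\bdry\Om:u(x)\ne 0\}$ has $\Cpx(N)=0$, $\Cpx$ is an outer capacity, and $\ut$ is quasicontinuous in $\R^n$, I can pick an open $V_\de\supset N$ with $\Cpx(V_\de)<\de$ such that $\ut|_{\R^n\setm V_\de}$ is continuous and vanishes on $\bdry\Om\setm V_\de$. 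By the definition of capacity there is $\psi_\de\in\Wpx(\R^n)$ with $0\le\psi_\de\le 1$, $\psi_\de\equiv 1$ on an open neighborhood $W_\de\supset V_\de$, and $\|\psi_\de\|_{\Wpx(\R^n)}<\de$ (truncating at $1$ if needed). Define $w:=\ut_\tau(1-\psi_\de)$, where $\ut_\tau$ is the zero-extension of $u_\tau$.

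The pivotal step is the geometric claim that $w$ has compact support in $\Om$, and hence $w\in W^{1,\px}_0(\Om)$. For $x_0\in\bdry\Om\setm V_\de$, continuity of $\ut$ on $\R^n\setm V_\de$ together with $\ut(x_0)=0$ gives $|\ut|<\tau$, and so $\ut_\tau=0$, on $B(x_0,r)\setm V_\de$ for some $r>0$; on $B(x_0,r)\cap V_\de\subset W_\de$ the factor $1-\psi_\de$ vanishes; hence $w\equiv 0$ on $B(x_0,r)$. For $x_0\in\bdry\Om\cap V_\de$ the factor $1-\psi_\de$ vanishes directly on $W_\de$. Combined with the bounded support of $\ut$, this forces $w$ to be compactly supported strictly inside $\Om$. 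This geometric patching is the main obstacle — one must carefully coordinate the quasicontinuity exceptional set with the capacity potential so that $1-\psi_\de$ genuinely vanishes on an $\R^n$-open neighborhood of $V_\de$, not merely q.e.\ there. Once this is in hand, the estimate
\[
\|u-w\|_{\Wpx(\Om)} \le \|u-u_\tau\|_{\Wpx(\Om)} + \|u_\tau\psi_\de\|_{\Wpx(\Om)},
\]
together with the product rule $\grad(u_\tau\psi_\de)=\psi_\de\grad u_\tau+u_\tau\grad\psi_\de$, the bounds $|u_\tau|\le M$ and $\|\psi_\de\|_{\Wpx(\R^n)}<\de$, and dominated convergence, sends both summands to zero as $\tau,\de\to 0$, completing the proof.
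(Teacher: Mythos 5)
Your proof is correct and follows essentially the same route as the paper's, which itself tracks Theorem~11.2.6 in Diening--Harjulehto--H\"ast\"o--R\r{u}\v{z}i\v{c}ka~\cite{DHHR}: reduce to bounded $u$ with bounded support, take a small-capacity open set covering the quasicontinuity exceptional set together with where $u\ne0$ on $\bdry\Om$, threshold $u$ near zero, and multiply by $1-\psi_\de$ where $\psi_\de$ is a capacity potential. The only cosmetic differences are that you decouple the truncation level $\tau$ from the capacity parameter $\de$ and handle both signs of $u$ at once, whereas the paper reduces to $u\ge0$ and uses a single $\eps$ for both roles.
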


\begin{proof}
By definition, we need to show that $u$ can be approximated in $\Wpx(\Om)$ by
functions from $\Wpx(\Om)$ with compact support in $\Om$.
This can be done in a similar way as the proof of Theorem~11.2.6 
in~\cite{DHHR}. 
Let us recall the main points of the argument.
By Lemma~9.1.1 in~\cite{DHHR}, $u$ can without loss of generality  be
assumed to be bounded and nonnegative.
Multiplying $u$ by the Lipschitz functions 
$\eta_j(x):=\min\{1,(j-|x|)_\limplus\}$ for $j=0,1,\ldots$ and noting that 
$\|u-u\eta_j\|_{\Wpx(\Om)}\to0$ as $j\to\infty$, we can also assume that
$u$ has bounded support.

Let $\eps>0$. By quasicontinuity and the fact that $u=0$ q.e.\ 
on $\bdry\Om$, there exists an open set $G\subset\R^n$ such that
$\Cpx(G)<\eps$, the restriction of $u$ to $\R^n\setm G$ is continuous and
$u=0$ on $\bdry\Om\setm G$.
In particular, this implies that the set 
\[
V:=\{x\in\R^n\setm G:u(x)<\eps\}
\]
is relatively open in $\R^n\setm G$ and $\bdry\Om\subset G\cup V$.
We can also find $w_\eps\in\Wpx(\R^n)$ such that $w_\eps=1$ on $G$,
$0\le w_\eps\le1$ in $\R^n$ and
\[
\int_{\R^n} ( |w_\eps|^{p(x)} + |\grad w_\eps|^{p(x)} )\,dx<\eps.
\]
As $G \cup V$ is open and contains $\bdy \Om$, 
it follows that the function $u_\eps:=(1-w_\eps)(u-\eps)_\limplus\chi_\Om$ has
compact support in $\Om$ and it is shown as in the proof of
Theorem~11.2.6 in~\cite{DHHR} that
$\|u-u_\eps\|_{\Wpx(\Om)}\to0$ as $\eps\to0$, i.e.\ $u\in\Wpx_0(\Om)$. 
\end{proof}

\begin{prop} \label{prop-Wpx0}
Assume that $u$ is quasicontinuous in\/ $\Om$.
Then $u \in \Wpx_0(\Om)$ if and only if
\[
      \ut:=\begin{cases}
         u & \text{in\/ } \Om, \\
         0 & \text{otherwise}, 
        \end{cases}
\]
is quasicontinuous and belongs to $\Wpx(\R^n)$.
\end{prop}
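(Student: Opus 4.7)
The plan is to tackle the two implications separately. The easy direction ($\Leftarrow$) is an immediate application of Lemma~\ref{lem-Wpx0-qe-on-bdry}: given $\ut\in\Wpx(\R^n)$ quasicontinuous on $\R^n$, the restriction $u=\ut|_\Om$ lies in $\Wpx(\Om)$, has $\ut$ as a quasicontinuous extension to $\R^n$, and vanishes pointwise (hence q.e.) on $\bdy\Om\subset\R^n\setm\Om$, so the cited lemma yields $u\in\Wpx_0(\Om)$.

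For the forward direction, I plan to construct a quasicontinuous representative of $\ut$ on $\R^n$ from the defining approximation of $u\in\Wpx_0(\Om)$, and then identify it q.e.\ with $\ut$. Picking $\phi_j\in\Wpx(\R^n)$ with compact support in $\Om$ such that $\phi_j\to u$ in $\Wpx(\Om)$, extension by zero gives $\tilde\phi_j\in\Wpx(\R^n)$ with
\[
\|\tilde\phi_j-\ut\|_{\Wpx(\R^n)}=\|\phi_j-u\|_{\Wpx(\Om)}\to 0,
\]
so $\ut\in\Wpx(\R^n)$. The log-H\"older hypothesis then ensures the existence of a quasicontinuous representative $w$ of $\ut$ on $\R^n$ (Chapter~11 in~\cite{DHHR}).

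It remains to show $w=\ut$ q.e.\ on $\R^n$. Inside $\Om$ this is a direct application of Lemma~\ref{lem-quasicont-ae-qe} to the two quasicontinuous functions $w|_\Om$ and $u$, which agree a.e.\ there. Outside $\Om$ the subtlety is that the pointwise vanishing of $\ut$ does not a priori prevent $w$ from differing on a Lebesgue null set of positive capacity. To handle this I would pass to a subsequence of $\{\tilde\phi_j\}$ converging q.e.\ to a quasicontinuous limit (a standard capacitary convergence fact in variable-exponent Sobolev theory); by Lemma~\ref{lem-quasicont-ae-qe} this limit coincides q.e.\ with $w$, and since each $\tilde\phi_j$ is identically zero off $\Om$, so is the limit, giving $w=0$ q.e.\ there. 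Combined with the observation preceding Lemma~\ref{lem-quasicont-ae-qe} (quasicontinuity is preserved under q.e.\ equality, since $\Cpx$ is an outer capacity), this promotes quasicontinuity from $w$ to $\ut$. The main obstacle is precisely this outside-$\Om$ identification; once it is in hand, the two Kilpel\"ainen-type gluings, inside and outside $\Om$, finish the argument.
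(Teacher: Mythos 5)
Your proof is correct and its backward direction is exactly the paper's: apply Lemma~\ref{lem-Wpx0-qe-on-bdry} to $\ut$. The forward direction is also sound, but your route differs from the paper's in one significant way: the paper simply invokes Corollary~11.2.5 in~\cite{DHHR}, which already hands over a quasicontinuous $v\in\Wpx(\R^n)$ with $v=u$ a.e.\ in $\Om$ and $v=0$ q.e.\ outside $\Om$, and then uses Lemma~\ref{lem-quasicont-ae-qe} once to upgrade the a.e.\ equality in $\Om$ to q.e. You instead reconstruct that corollary from scratch: take the $\Wpx_0$-approximating sequence $\phi_j$, extend by zero to $\tilde\phi_j$, obtain Cauchyness in $\Wpx(\R^n)$, pass to a q.e.-convergent subsequence (the capacitary convergence fact), and glue via Kilpel\"ainen on both sides of $\bdry\Om$. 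Both proofs use the same underlying machinery (quasicontinuous representatives, outer regularity of $\Cpx$, Kilpel\"ainen's uniqueness lemma); yours buys self-containment at the cost of invoking an extra convergence theorem that the paper never needs. Two small points to tighten: the equality $\|\tilde\phi_j-\ut\|_{\Wpx(\R^n)}=\|\phi_j-u\|_{\Wpx(\Om)}$ presupposes $\ut\in\Wpx(\R^n)$; the clean way is to note $\{\tilde\phi_j\}$ is Cauchy in $\Wpx(\R^n)$, take its limit $v$, and observe $v=\ut$ a.e. Also, when you say ``each $\tilde\phi_j$ is identically zero off $\Om$,'' what you actually feed into the convergence theorem is a \emph{quasicontinuous representative} of $\tilde\phi_j$, so you should note that this representative vanishes q.e.\ off $\Om$ (which follows since $\tilde\phi_j$ vanishes on an open neighbourhood of $\R^n\setm\Om$).
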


\begin{proof}
Assume first that $u \in \Wpx_0(\Om)$.
By Corollary~11.2.5 in~\cite{DHHR},
there is a quasicontinuous function $v \in \Wpx(\R^n)$
such that $v=u$ a.e.\ in $\Om$ and $v=0$ q.e.\ outside $\Om$.
By Lemma~\ref{lem-quasicont-ae-qe}, $v=u$ q.e.\ in $\Om$,
and thus $\ut=v$ q.e.\ in $\R^n$.
Hence $\ut \in \Wpx(\R^n)$ and $\ut$ is quasicontinuous.
The converse follows directly from Lemma~\ref{lem-Wpx0-qe-on-bdry}.
\end{proof}

The following ``squeezing lemma"
is useful when proving that certain functions 
belong to $\Wpx_0(\Om)$.

\begin{lem}  \label{lem-police}
Let $u\in\Wpx(\Om)$ and $u_1, u_2\in\Wpx_0(\Om)$ be such that
$u_1\le u\le u_2$ a.e.\ in\/ $\Om$.
Then $u\in\Wpx_0(\Om)$.
\end{lem}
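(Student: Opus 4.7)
The plan is to reduce to Lemma~\ref{lem-Wpx0-qe-on-bdry} by constructing a quasicontinuous function on $\R^n$ that agrees with $u$ a.e.\ in $\Om$ and vanishes (quasi)everywhere on $\bdy\Om$. First, pass to quasicontinuous representatives $u^*, u_1^*, u_2^*$ of $u, u_1, u_2$ in $\Om$ (these exist for $\Wpx$-functions). Proposition~\ref{prop-Wpx0} applied to $u_1$ and $u_2$ shows that their zero extensions, call them $\ft_1$ and $\ft_2$, belong to $\Wpx(\R^n)$ and are quasicontinuous on all of $\R^n$. Applying Lemma~\ref{lem-quasicont-ae-qe} to the quasicontinuous pairs $(\max\{u_1^*,u^*\}, u^*)$ and $(\min\{u^*,u_2^*\}, u^*)$ in $\Om$ upgrades the a.e.\ squeezing to the pointwise q.e.\ squeezing $u_1^* \le u^* \le u_2^*$ in $\Om$. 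Let $v$ denote the extension of $u^*$ by zero to $\R^n \setm \Om$. Then $v|_\Om = u^* = u$ a.e.\ in $\Om$, so $v \in \Wpx(\Om)$, and $\ft_1 \le v \le \ft_2$ q.e.\ in $\R^n$: inside $\Om$ by the above, and outside $\Om$ because all three functions vanish there by construction. Also $v \equiv 0$ on $\bdy\Om$ by the definition of the zero extension.

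The heart of the argument is to verify that $v$ is quasicontinuous on $\R^n$. Granting this, Lemma~\ref{lem-Wpx0-qe-on-bdry} (with $\bdy\Om$) immediately yields $v \in \Wpx_0(\Om)$, hence $u \in \Wpx_0(\Om)$. For the quasicontinuity, given $\eps>0$, choose open sets $G_0, G_1, G_2 \subset \R^n$ with $\Cpx(G_i) < \eps/4$ such that $u^*|_{\Om \setm G_0}$, $\ft_1|_{\R^n \setm G_1}$ and $\ft_2|_{\R^n \setm G_2}$ are continuous, and an open set $N \subset \R^n$ with $\Cpx(N) < \eps/4$ containing the exceptional set on which the q.e.\ squeezing $\ft_1 \le v \le \ft_2$ fails pointwise. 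Set $G = G_0 \cup G_1 \cup G_2 \cup N$, so $\Cpx(G) < \eps$.

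I check continuity of $v$ on $\R^n \setm G$ by cases. At points of $\Om \setm G$, $v$ coincides with $u^*$ in a neighbourhood within $\Om \setm G_0$, so inherits continuity. At points of $\R^n \setm \itoverline{\Om}$, $v$ vanishes identically in a neighbourhood. The delicate case is $x_0 \in \bdy\Om \setm G$: here $v(x_0)=0$, and for $y \in \R^n \setm G$ approaching $x_0$, the pointwise squeezing holds at $y$, so $\ft_1(y) \le v(y) \le \ft_2(y)$; since $\ft_1, \ft_2$ are continuous on $\R^n \setm G$ and $\ft_1(x_0) = \ft_2(x_0) = 0$ (the zero extensions vanish on $\bdy\Om$ by definition), both bounds tend to zero, forcing $v(y) \to 0 = v(x_0)$. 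The main obstacle is precisely this boundary step, whose validity hinges on having previously absorbed the capacity-zero exceptional set of the squeezing into the open set $N$, so that the squeezing can be invoked pointwise along all admissible approach sequences.
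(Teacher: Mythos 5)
Your proof is correct and follows essentially the same route as the paper's: zero-extend, exhibit quasicontinuity by squeezing against the continuous restrictions of $\ft_1$ and $\ft_2$ on the complement of a small-capacity open set, and invoke Lemma~\ref{lem-Wpx0-qe-on-bdry}. The only minor variation is that the paper directly chooses the averaged-integral representatives $\limsup_{r\to0}\vint_{B(x,r)} v\,dx$ (Theorem~11.4.4 in~\cite{DHHR}), under which the a.e.\ inequality $u_1\le u\le u_2$ automatically becomes an \emph{everywhere} inequality, thereby avoiding your extra step of invoking Lemma~\ref{lem-quasicont-ae-qe} and absorbing the resulting q.e.\ exceptional set into an additional open set $N$.
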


We let $B(x,r)$ be the open ball with centre $x$ and radius $r$.

\begin{proof}
Replacing each function $v=u_1,u_2,u$ by its quasicontinuous representative 
\[
\limsup_{r\to0} \vint_{B(x,r)} v\,dx
\]
(provided by Theorem~11.4.4 in 
Diening--Harjulehto--H\"ast\"o--R\r{u}\v{z}i\v{c}ka~\cite{DHHR}),
we can assume that $u_1$, $u_2$ and $u$ are quasicontinuous in $\Om$
and $u_1\le u\le u_2$ everywhere in~$\Om$.

To be able to apply Lemma~\ref{lem-Wpx0-qe-on-bdry}, we need to show
that the zero extension of $u$ to $\R^n\setm\Om$ is quasicontinuous
in $\R^n$.
To this end,
Proposition~\ref{prop-Wpx0} implies that both $u_1$ and $u_2$ 
can be extended by zero outside $\Om$ to obtain quasicontinuous functions
on $\R^n$.
In other words, given $\eps>0$, there exists an open set $G$ with 
$\Cpx(G)<\eps$ such that the restrictions $u_1|_{\R^n\setm G}$ and
$u_2|_{\R^n\setm G}$ are continuous.  
Since $u|_{\R^n\setm G}$ lies between $u_1|_{\R^n\setm G}$ and $u_2|_{\R^n\setm G}$, and
$u=0$ on $\bdry\Om$, we conclude that $u|_{\R^n\setm G}$
is continuous at all $x\in\bdry\Om\setm G$.
It is clearly continuous in $\R^n\setm\overline{\Om}$ and quasicontinuous
in $\Om$.
Thus, $u$ is quasicontinuous in $\R^n$.
Lemma~\ref{lem-Wpx0-qe-on-bdry} then shows that $u\in\Wpx_0(\Om)$.
\end{proof}

\section{Supersolutions and obstacle problems}
\label{sect-supersoln}

In this section we include several auxiliary results about supersolutions 
and obstacle problems.
In particular, we discuss relations between these two notions, existence and 
uniqueness of the solutions, their interior regularity and a comparison
principle.
We shall consider the following type of obstacle problem.

\begin{deff} \label{deff-obst-K}
Let $f \in \Wpx(\Om)$ and $\psi \colon \Om \to [-\infty,\infty]$. 
Then we define
\[
    \K_{\psi,f}=\{v \in \Wpx(\Om) : v-f \in \Wpx_0(\Om) 
            \text{ and } v \ge \psi \ \text{a.e. in } \Om\}.
\]
A function $u \in \K_{\psi,f}$
is a \emph{solution of the $\K_{\psi,f}$-obstacle problem}
if 
\[
       \int_\Om |\nabla u|^{\p} \, dx 
       \le \int_\Om |\nabla v|^{\p} \, dx 
       \quad \text{for all } v \in \K_{\psi,f}.
\]
\end{deff}

The following equivalent definition of obstacle problems is given 
in~Harjulehto--H\"ast\"o--Koskenoja--Lukkari--Marola~\cite{HHKLM07},
p.\ 3427. The result in~\cite{HHKLM07} is obtained for a bounded $\Om$, but the proof is valid also 
for unbounded sets. In this paper, however, we will need it only for bounded sets.

\begin{prop}   \label{prop-sol-obst-iff-int}
The function  $u$ is a solution of the $\K_{\psi,f}$-obstacle problem
if and only if 
\[
       \int_\Om p(x)|\nabla u|^{\p-2} 
        \nabla u \cdot  \nabla (v-u)\, dx \ge 0
       \quad \text{for all } v \in \K_{\psi,f}.
\]
\end{prop}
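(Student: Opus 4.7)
The plan is to use standard convex-variational arguments based on the fact that the functional $J(v) := \int_\Om |\nabla v|^{p(x)}\,dx$ is convex on the convex set $\K_{\psi,f}$, so that minimizers of $J$ are characterized by a first-order variational inequality. Both directions will come from analyzing the one-variable function
\[
\phi(t) := J(u + t(v-u)) = \int_\Om |\nabla u + t\nabla(v-u)|^{p(x)}\,dx, \qquad t\in[0,1],
\]
for arbitrary $v\in\K_{\psi,f}$. Note that $u+t(v-u) = (1-t)u + tv \in \K_{\psi,f}$ for every such $t$ because $\K_{\psi,f}$ is convex (the constraint $v\ge\psi$ and the affine condition $v-f\in\Wpx_0(\Om)$ are both preserved by convex combinations).

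First I would verify that $\phi$ is convex and that $\phi'(0^+)$ exists and equals $\int_\Om p(x)|\nabla u|^{p(x)-2}\nabla u\cdot\nabla(v-u)\,dx$. Convexity of $\phi$ follows from the pointwise convexity of $t\mapsto|a+tb|^{p(x)}$ when $p(x)>1$. To differentiate under the integral sign at $t=0$, I would apply the mean-value theorem to write the difference quotient as $p(x)|\nabla u+\theta\nabla(v-u)|^{p(x)-2}(\nabla u+\theta\nabla(v-u))\cdot\nabla(v-u)$ for some $\theta\in(0,t)$, and dominate it by
\[
C\,p^+\bigl(|\nabla u|^{p(x)-1}+|\nabla(v-u)|^{p(x)-1}\bigr)|\nabla(v-u)|,
\]
with a constant $C$ depending only on $p^+$; this is the step where $p^+<\infty$ is used. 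The H\"older inequality~\eqref{Holder-ineq} with the conjugate exponent $p'(x)$ shows this majorant is in $L^1(\Om)$ because $\nabla u,\nabla v\in L^{p(\cdot)}(\Om)$ and $|\nabla u|^{p(x)-1}\in L^{p'(\cdot)}(\Om)$. Dominated convergence then yields the formula for $\phi'(0^+)$.

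With this in hand, both implications are short. If $u$ solves the obstacle problem, then $\phi$ attains its minimum at $t=0$ on $[0,1]$, so $\phi'(0^+)\ge 0$, which is precisely the variational inequality. Conversely, if the variational inequality holds, convexity of $\phi$ gives
\[
J(v)-J(u) = \phi(1)-\phi(0) \ge \phi'(0^+) \ge 0,
\]
so $u$ minimizes $J$ over $\K_{\psi,f}$.

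The main obstacle is purely technical: justifying the interchange of differentiation and integration in the variable exponent setting. Unlike the constant $p$ case where $|\xi|^{p-1}\in L^{p/(p-1)}$ is immediate, here one must invoke the embedding of variable exponent Lebesgue spaces and the equivalence~\eqref{mod-norm-zero-conv} between modular and norm convergence to ensure the dominating function is integrable. Once that is handled, the rest of the argument is a routine application of the elementary convex-analysis characterization of minimizers over convex sets, and works verbatim whether $\Om$ is bounded or not, since no Poincar\'e-type inequality is required.
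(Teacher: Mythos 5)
The paper gives no proof of this proposition; it cites Harjulehto--H\"ast\"o--Koskenoja--Lukkari--Marola~\cite{HHKLM07}, p.~3427, for this standard variational characterization. Your convex-analysis argument---showing $\phi(t)=J\bigl((1-t)u+tv\bigr)$ is convex, computing $\phi'(0^+)$ by the mean-value theorem and dominated convergence with the majorant controlled via $p^+<\infty$ and the variable-exponent H\"older inequality, and then using convexity for the converse implication---is correct and is the usual proof of this fact (essentially the argument in Heinonen--Kilpel\"ainen--Martio's book for constant $p$, adapted to the variable exponent), so it matches what the cited source does.
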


\begin{deff} \label{deff-soln}
A function $u \in \Wpxloc(\Om)$ is a
\emph{\textup{(}super\/\textup{)}solution} of the $\px$-Laplace equation if
\[
      \int_{\phi \ne 0} |\nabla u|^{\p}\, dx 
      \le \int_{\phi \ne 0} |\nabla (u+\phi)|^{\p}\, dx 
\]
for all (nonnegative) $\phi \in C_0^\infty(\Om)$. A \emph{\px-harmonic function} is a continuous solution.
\end{deff}

Clearly, $u$ is a solution if and only if it 
is both a supersolution and a subsolution (i.e.\ $-u$ is a supersolution).
It is also immediate that a solution of an obstacle problem
is a supersolution.
Conversely, if $u$ is a supersolution in $\Om$ and $\Om'\Subset\Om$ is open
then by the density of $C^\infty_0(\Om')$ in $\Wpx_0(\Om')$ we see
that $u$ is a solution of the obstacle problem in $\Om'$ with $u$
as the obstacle and the boundary values.
(Recall that $A \Subset \Om$ if the closure of $A$ is a compact subset of $\Om$.)
The following characterization of (super)solutions then follows from 
Proposition~\ref{prop-sol-obst-iff-int}, cf.\ 
Harjulehto--H\"ast\"o--Koskenoja--Lukkari--Marola~\cite{HHKLM07},
p.\ 3427.
By the density of $C^\infty_0(\Om)$ again, it
is equivalent to require that \eqref{eq-prop-super-sol}
holds for all (nonnegative) $\phi \in \Wpx_0(\Om)$.

\begin{prop}\label{prop-super-sol}
A function $u \in \Wpxloc(\Om)$ is a\/
\textup{(}super\/\textup{)}solution if and only if
\begin{equation} \label{eq-prop-super-sol}
       \int_\Om p(x) |\nabla u|^{\p-2} \nabla u \cdot \nabla \phi\, dx \ge 0
\end{equation} 
for all\/ \textup{(}nonnegative\/\textup{)} $\phi \in C_0^\infty(\Om)$.
\end{prop}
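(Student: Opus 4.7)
The plan is to derive Proposition \ref{prop-super-sol} as a short consequence of the obstacle-problem characterization (Proposition \ref{prop-sol-obst-iff-int}) for the forward direction, and of convexity of $\xi\mapsto|\xi|^{p(x)}$ for the backward direction. The key observation, already recorded in the paragraph preceding the statement, is that a supersolution $u$ on $\Om$ solves the $\K_{u,u}$-obstacle problem on every open $\Om'\Subset\Om$.

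For the forward direction, let $u$ be a supersolution and fix a nonnegative $\phi\in C_0^\infty(\Om)$. Choose $\Om'$ with $\spt\phi\subset\Om'\Subset\Om$. The function $v:=u+\phi$ lies in $\K_{u,u}(\Om')$ since $\phi\ge 0$ gives $v\ge u$ and $v-u=\phi\in C_0^\infty(\Om')\subset\Wpx_0(\Om')$. Applying Proposition \ref{prop-sol-obst-iff-int} on $\Om'$ to this $v$ yields
\[
\int_{\Om'} p(x)|\nabla u|^{p(x)-2}\nabla u\cdot\nabla\phi\,dx\ge 0,
\]
which is exactly \eqref{eq-prop-super-sol} since $\nabla\phi\equiv 0$ outside $\spt\phi\subset\Om'$. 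In the solution case, $-u$ is also a supersolution, so the same argument applied to $-u$ gives the reverse inequality for nonnegative $\phi$; equality for arbitrary $\phi\in C_0^\infty(\Om)$ then follows by writing any such $\phi$ as the difference of two nonnegative smooth functions (e.g.\ by adding a large multiple of a smooth bump that equals $1$ on $\spt\phi$).

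For the converse, suppose \eqref{eq-prop-super-sol} holds for all nonnegative $\phi\in C_0^\infty(\Om)$. Convexity of $\xi\mapsto|\xi|^{p(x)}$, whose $\xi$-gradient is $p(x)|\xi|^{p(x)-2}\xi$, gives the pointwise a.e.\ subgradient inequality
\[
|\nabla u+\nabla\phi|^{p(x)}\ge|\nabla u|^{p(x)}+p(x)|\nabla u|^{p(x)-2}\nabla u\cdot\nabla\phi.
\]
Integrating over $\Om$ and applying \eqref{eq-prop-super-sol} yields $\int_\Om|\nabla(u+\phi)|^{p(x)}\,dx\ge\int_\Om|\nabla u|^{p(x)}\,dx$. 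To match the form in Definition \ref{deff-soln}, one invokes the standard Sobolev fact that $\nabla\phi=0$ a.e.\ on $\{\phi=0\}$, so $|\nabla(u+\phi)|^{p(x)}=|\nabla u|^{p(x)}$ a.e.\ there; subtracting the common contribution of $\{\phi=0\}$ yields the supersolution inequality over $\{\phi\ne 0\}$. For solutions, the argument applied to both $u$ and $-u$ (which satisfies the corresponding inequality after a sign change) shows that $u$ is both a supersolution and a subsolution. The only real point requiring care is the bookkeeping between integration over $\Om$ in \eqref{eq-prop-super-sol} and over $\{\phi\ne 0\}$ in Definition \ref{deff-soln}, which the vanishing of $\nabla\phi$ on $\{\phi=0\}$ handles; no substantive obstacle remains once Proposition \ref{prop-sol-obst-iff-int} is at hand.
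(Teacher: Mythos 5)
Your forward direction matches the paper's approach: the paragraph preceding the proposition observes that a supersolution in $\Om$ solves the $\K_{u,u}$-obstacle problem on every $\Om'\Subset\Om$, and Proposition~\ref{prop-sol-obst-iff-int} then converts this into \eqref{eq-prop-super-sol}; the paper reads off \emph{both} directions from the ``if and only if'' of Proposition~\ref{prop-sol-obst-iff-int} together with this equivalence. Your converse instead bypasses the obstacle problem and argues directly from convexity of $\xi\mapsto|\xi|^{p(x)}$ and the subgradient inequality $|\nabla u+\nabla\phi|^{p(x)}\ge|\nabla u|^{p(x)}+p(x)|\nabla u|^{p(x)-2}\nabla u\cdot\nabla\phi$. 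This is a legitimately more self-contained route for that half: it inlines the convexity computation that underlies the ``if'' half of Proposition~\ref{prop-sol-obst-iff-int}, rather than citing it, and it never needs to re-enter the $\K_{u,u}(\Om')$ framework or the density passage from $C_0^\infty(\Om')$ to $\Wpx_0(\Om')$. The price is only a small bookkeeping point, which you flagged: since $u\in\Wpxloc(\Om)$, the integrals $\int_\Om|\nabla u|^{p(x)}\,dx$ and $\int_\Om|\nabla(u+\phi)|^{p(x)}\,dx$ may both be infinite, so ``integrating over $\Om$ and then subtracting the $\{\phi=0\}$ contribution'' is not literally licensed. The clean fix is to integrate the pointwise subgradient inequality over $\{\phi\ne 0\}\subset\spt\phi$ from the start, where all three terms are finite because $\spt\phi\Subset\Om$, and to note that $\int_{\{\phi\ne0\}}p(x)|\nabla u|^{p(x)-2}\nabla u\cdot\nabla\phi\,dx=\int_\Om p(x)|\nabla u|^{p(x)-2}\nabla u\cdot\nabla\phi\,dx$ because $\nabla\phi=0$ a.e.\ on $\{\phi=0\}$; this yields the inequality of Definition~\ref{deff-soln} directly without any subtraction of potentially infinite quantities. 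With that rephrasing your argument is complete and correct, and for the solution case your reduction to the supersolution case for both $u$ and $-u$, followed by splitting a general $\phi$ as a difference of nonnegative smooth compactly supported functions, is fine.
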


For a function $u:\Om \to \R$, let
\[
    u^*(x)=\essliminf_{y \to x} u(y),
    \quad x \in \Om.
\]
It is easy to see that $u^*$ is always lower semicontinuous,
see the proof of Theorem~8.22 in 
Bj\"orn--Bj\"orn~\cite{BBbook}.
We call $u^*$ the \emph{lsc-regularization} of $u$,
and also say that $u$ is \emph{lsc-regularized} if $u=u^*$.

\begin{thm} \label{thm-u*}
Assume that $u$ is a supersolution in\/ $\Om$.
Then $u^*$ is a quasicontinuous supersolution in\/ $\Om$
and $u^*=u$ a.e.\ in $\Om$. 
Moreover, if $u$ is quasicontinuous, then $u^*=u$ q.e.\ in\/ $\Om$.
\end{thm}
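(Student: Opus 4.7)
The plan is to identify $u^*$ with the quasicontinuous representative of $u$ at \px-quasievery point, and then read off all asserted properties from this identification.

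Let $\ut$ be the quasicontinuous representative of $u\in\Wpxloc(\Om)$ provided by Theorem~11.4.4 in~\cite{DHHR}; explicitly, $\ut(x)=\lim_{r\to 0^+}\vint_{B(x,r)} u\,dy$ at q.e.\ $x\in\Om$. Since $\ut=u$ a.e.\ and the assignment $w\mapsto w^*$ depends only on the a.e.\ equivalence class of $w$, we have $u^*=\ut^*$ everywhere on $\Om$.

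The central step is to show $u^*=\ut$ q.e. The bound $u^*(x)\le\ut(x)$ at q.e.\ $x$ follows immediately from $\essinf_{B(x,r)} u\le\vint_{B(x,r)} u\,dy$ combined with the identification of $\ut$ as the limit of averages. For the reverse inequality I would invoke the weak Harnack inequality for nonnegative \px-supersolutions from Harjulehto--Kinnunen--Lukkari~\cite{HaKiLu07}, applied to the shifted nonnegative supersolution $u-\essinf_{B(x,2r)} u$ on balls $B(x,2r)\Subset\Om$:
\[
\essinf_{B(x,r)} u-\essinf_{B(x,2r)} u
\ge c\Bigl(\vint_{B(x,2r)}\bigl(u-\essinf_{B(x,2r)} u\bigr)^{s}\,dy\Bigr)^{1/s}
\]
for some exponent $s>0$ and constant $c>0$. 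A standard limit argument sends, at q.e.\ $x$, the left-hand side to $0$ (both essinfs tend to $u^*(x)$) and the right-hand side to $c(\ut(x)-u^*(x))$, which forces $u^*(x)\ge\ut(x)$. Hence $u^*=\ut$ q.e., and since $\Cpx$-null sets are Lebesgue-null, we deduce the a.e.\ identity $u^*=u$.

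From $u^*=\ut$ q.e.\ and the stability of quasicontinuity under modification on $\Cpx$-null sets, $u^*$ is itself quasicontinuous. Because $u^*=u$ a.e., the weak gradients of the two functions coincide, so $u^*\in\Wpxloc(\Om)$ and the integral condition in Proposition~\ref{prop-super-sol}, being a.e.-invariant, passes from $u$ to~$u^*$. Finally, if $u$ itself is quasicontinuous, then $u$ and $u^*$ are two quasicontinuous functions with $u=u^*$ a.e., and Lemma~\ref{lem-quasicont-ae-qe} yields $u=u^*$ q.e. The main difficulty lies in the weak-Harnack step: it is the only place where the supersolution hypothesis is genuinely used, and the shift trick is what allows the multiplicative Harnack constant to be effectively absorbed, thereby upgrading a pointwise lower bound to the identity $u^*=\ut$ q.e.
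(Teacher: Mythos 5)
Your approach is genuinely different from the paper's. The paper's proof is essentially a chain of citations: it invokes Theorem~4.1 and Remark~4.2 of \cite{HaKiLu07} to get $u^*=u$ a.e.\ (hence $u^*$ is a supersolution), Theorem~6.1 of \cite{HHKLM07} for superharmonicity of $u^*$, and Theorem~6.7 of \cite{HarLat08} for quasicontinuity, finishing with Lemma~\ref{lem-quasicont-ae-qe}. You instead try to prove directly that $u^*$ coincides q.e.\ with the quasicontinuous representative $\ut$ of Theorem~11.4.4 in \cite{DHHR}, which would indeed yield both the a.e.\ identity and quasicontinuity in one stroke.

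However, your central step has a gap. After applying the weak Harnack inequality to $w:=u-\essinf_{B(x,2r)}u\ge0$, you let $r\to0$ and assert that the right-hand side
\[
c\Bigl(\vint_{B(x,2r)} w^{s}\,dy\Bigr)^{1/s}
\]
converges to $c\,(\ut(x)-u^*(x))$. This does not follow. The exponent $s$ in the variable-exponent weak Harnack inequality of \cite{HaKiLu07} is small, in particular $s<1$, so by Jensen's inequality $\bigl(\vint w^{s}\bigr)^{1/s}\le\vint w$, and in general the left side of this inequality can be \emph{much} smaller. Concretely, one can arrange $w$ to have a narrow high spike on a set whose relative measure shrinks like $\eps^{2}$ with height like $\eps^{-2}$: then $\vint w^{s}\to0$ while $\vint w\not\to0$. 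So from $\essinf_{B(x,r)}u-\essinf_{B(x,2r)}u\to0$ you may only conclude $\bigl(\vint_{B(x,2r)} w^{s}\bigr)^{1/s}\to0$, which does \emph{not} give $\vint_{B(x,2r)}w\to0$, hence does not force $\ut(x)\le u^*(x)$. What you would need is a lower bound of $\bigl(\vint w^{s}\bigr)^{1/s}$ against $\vint w$, which requires additional control of the distribution of $u$ near $x$ — control that the q.e.\ convergence of averages alone does not provide. Your first inequality $u^*\le\ut$ q.e.\ is fine, and the closing reductions (quasicontinuity via q.e.\ modification, Proposition~\ref{prop-super-sol} being a.e.-invariant, Lemma~\ref{lem-quasicont-ae-qe}) are all correct; it is only the reverse inequality $u^*\ge\ut$ q.e.\ where the argument breaks. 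To repair it you would essentially have to re-derive the fine-regularity results that the paper cites, which is not a one-line limit.
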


\begin{proof}
By Theorem~4.1 (and Remark~4.2) in 
Harjulehto--Kinnunen--Lukkari~\cite{HaKiLu07},
$u^*=u$ a.e., and thus also $u^*$ is a supersolution.
By Theorem~6.1 in 
Harjulehto--H\"ast\"o--Koskenoja--Lukkari--Marola~\cite{HHKLM07},
$u^*$ is superharmonic (see Section~\ref{sect-superh} below for the definition of
superharmonic functions).
It then follows from Theorem~6.7 in Harjulehto--Latvala~\cite{HarLat08}, 
that $u^*$ is quasicontinuous.

Moreover, if $u$ is quasicontinuous, then $u^*= u$ q.e.\ in $\Om$,
by Lemma~\ref{lem-quasicont-ae-qe}.
\end{proof}

\medskip

\emph{In the rest of this section we assume that\/ $\Om$ is a bounded
nonempty open set.}

\medskip

\begin{thm}\label{thm-obst-prob-bdd}
If $\psi$ is bounded from above, $f$ is bounded, and $\K_{\psi,f}\ne \emptyset$,
then there exists a solution $u$ of the $\K_{\psi,f}$-obstacle problem,
and the solution is unique up to sets of measure zero. 
Moreover, $u^*$ is the unique lsc-regularized solution,
and $u^*$ is bounded.
\end{thm}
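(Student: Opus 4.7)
My plan is to apply the direct method of the calculus of variations for existence, use strict convexity of $t \mapsto |t|^{p(x)}$ for a.e.\ uniqueness, invoke Theorem~\ref{thm-u*} to pass to the lsc-regularized representative, and finally truncate $u$ from above and below via Lemma~\ref{lem-police} to obtain boundedness. For existence, the energy $E(v) := \int_\Om |\grad v|^{p(x)}\,dx$ is convex and continuous on $\Wpx(\Om)$, hence weakly lower semicontinuous, and $\K_{\psi,f}$ is convex and strongly (hence weakly) closed, since $\Wpx_0(\Om)$ is a closed subspace and the pointwise inequality $v \ge \psi$ a.e.\ passes to $\Lpx$-limits. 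For a minimizing sequence $\{v_j\} \subset \K_{\psi,f}$, boundedness of the modular combined with \eqref{norm-mod-le-1} controls $\|\grad v_j\|_{\Lpx(\Om)}$; since $v_j - f \in \Wpx_0(\Om)$ and $\Om$ is bounded, a Poincar\'e inequality for $\Wpx_0(\Om)$ then bounds $\|v_j\|_{\Wpx(\Om)}$. Extracting a weakly convergent subsequence produces a minimizer $u \in \K_{\psi,f}$. For uniqueness, given two solutions $u_1, u_2$ of common energy $m$, the midpoint $w = (u_1+u_2)/2 \in \K_{\psi,f}$ satisfies $E(w) \le m \le E(w)$ by convexity; \emph{strict} convexity of $t \mapsto |t|^{p(x)}$ (valid since $\pminus > 1$) forces $\grad u_1 = \grad u_2$ a.e., and Poincar\'e applied to $u_1 - u_2 \in \Wpx_0(\Om)$ yields $u_1 = u_2$ a.e.

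Every solution of the obstacle problem is a supersolution, so Theorem~\ref{thm-u*} gives that $u^*$ is a quasicontinuous supersolution with $u^* = u$ a.e. Consequently $u^* - f$ and $u - f$ coincide as elements of $\Wpx(\Om)$, so $u^* - f \in \Wpx_0(\Om)$; combined with $u^* \ge \psi$ a.e.\ and $E(u^*) = E(u)$ this shows that $u^*$ itself solves the obstacle problem. If $v$ is any other lsc-regularized solution, then $v = u$ a.e.\ by the previous step, and since the operation $g \mapsto \essliminf_{y \to \cdot} g(y)$ depends only on the a.e.\ equivalence class of $g$, one gets $v = v^* = u^*$ pointwise.

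For boundedness, set $M := \max\{\esssup_\Om \psi,\, \esssup_\Om f\} < \infty$ and compare $u$ with the upper truncation $\ut := \min(u, M)$. The sandwich $\min(u-f, 0) \le \ut - f \le u - f$, together with Lemma~\ref{lem-police} and the standard fact that positive/negative truncation preserves $\Wpx_0(\Om)$, shows $\ut - f \in \Wpx_0(\Om)$; since $\ut \ge \psi$ a.e., we have $\ut \in \K_{\psi,f}$. From minimality $E(u) \le E(\ut)$ and the pointwise bound $|\grad \ut| \le |\grad u|$ it follows that $\grad u = 0$ a.e.\ on $\{u > M\}$, so $(u-M)_\limplus = (u-f) - (\ut - f) \in \Wpx_0(\Om)$ has vanishing gradient and Poincar\'e gives $u \le M$ a.e. A symmetric argument with $\max(u, -\|f\|_\infty)$ yields $u \ge -\|f\|_\infty$ a.e., so $u$, and hence $u^* = \essliminf u$, is pointwise bounded. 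The main technical obstacle is precisely this truncation step: one has to keep the capped competitors inside $\K_{\psi,f}$ without any convenient lattice description of $\Wpx_0(\Om)$ tied to boundary values, and the squeezing Lemma~\ref{lem-police} is the tool designed to bypass this difficulty.
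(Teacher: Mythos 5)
Your argument is correct, and it diverges from the paper's proof in two of the three main steps. For existence the paper invokes the monotone-operator framework of Appendix~I in Heinonen--Kilpel\"ainen--Martio (monotonicity, coercivity and weak continuity of ${\cal L}_\px$), whereas you run the direct method: convexity and modular lower semicontinuity give weak lower semicontinuity of the energy, a Poincar\'e inequality for $\Wpx_0(\Om)$ supplies coercivity along a minimizing sequence, and reflexivity of $\Wpx$ together with the fact that $\K_{\psi,f}$ is convex and strongly closed (hence weakly closed) closes the argument. For uniqueness the paper cites Theorem~3.2 of Harjulehto--H\"ast\"o--Koskenoja--Lukkari--Marola applied to $\min\{u,v\}$, while you use strict convexity of $\xi\mapsto|\xi|^{p(x)}$ (valid since $\pminus>1$) on the midpoint $(u_1+u_2)/2\in\K_{\psi,f}$ to force $\grad u_1=\grad u_2$ a.e., and then Poincar\'e on $u_1-u_2\in\Wpx_0(\Om)$. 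Both routes are standard; yours is more self-contained (no appeal to an auxiliary comparison theorem or operator calculus), at the cost of having to verify reflexivity, modular lower semicontinuity and a $\Wpx_0$-Poincar\'e inequality, all of which are indeed available in DHHR. The lsc-regularization step matches the paper (via Theorem~\ref{thm-u*}). For boundedness the paper simply truncates at $\pm M$ with $M=\max\{\sup|f|,\sup\psi\}$ and appeals to uniqueness; you carry out essentially the same truncation but are more explicit about why the truncated competitor lies in $\K_{\psi,f}$ (precisely the point where Lemma~\ref{lem-police} is needed, a detail the paper glosses over) and conclude via the vanishing of $\grad(u-M)_\limplus$ and Poincar\'e rather than via the already-proved uniqueness; the latter shortcut would also be available to you once you have shown $\ut\in\K_{\psi,f}$ and $E(\ut)=E(u)$.
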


\begin{proof}
The existence is proved as in Appendix I in 
Heinonen--Kilpel\"ainen--Martio~\cite{HeKiMa}, namely by
showing the monotonicity, coercivity and weak continuity for the operator
\[
{\cal L}_\px:\{\grad v:v\in\K_{\psi,f}\}\to L^{p'(\cdot)}(\Om,\R^n),\quad \hbox{where }
1/p(x) +1/p'(x)\equiv 1,
\] 
defined by
\[
\langle {\cal L}_\px{\bf v},{\bf u} \rangle 
   := \int_\Om p(x) |{\bf v}(x)|^{p(x)-2} {\bf v}(x) \cdot  {\bf u}(x) \,dx.
\]
 These properties are for the variable exponent verified in the same way 
as in the constant exponent case, cf.\ Appendix I in~\cite{HeKiMa}
and p.~3427 in Harjulehto--H\"ast\"o--Koskenoja--Lukkari--Marola~\cite{HHKLM07}.

The uniqueness follows from Theorem~3.2 in~\cite{HHKLM07}.
Indeed, if $u$ and $v$ are solutions of the obstacle problem, 
then both are supersolutions and $\min\{u,v\}\in\K_{\psi,f}$.
Theorem~3.2 in~\cite{HHKLM07} then implies that $u\le v$ and $v\le u$
a.e.

As for the last part, $u^*=u$ a.e.\ by Theorem~\ref{thm-u*}, 
and thus $u^*$ is also a solution
of the $\K_{\psi,f}$-obstacle problem. Since $u^*$ is independent 
of which solution $u$ we choose of
the $\K_{\psi,f}$-obstacle problem, we see that
it is the unique lsc-regularized solution.

Let $M=\max\{\sup |f|,\sup \psi\}$.
Then the truncation $v:=\max\{\min\{u,M\},-M\}$ of $u$
at $\pm M$ is also a solution, and by the uniqueness we see
that $|u^*| \le M$. 
\end{proof}

\begin{thm}
Assume that $\psi: \Om \to [-\infty,\infty)$ is continuous\/ 
\textup{(}as an extended real-valued function\/\textup{)}
and bounded from above, 
that $f$ is bounded, and that $\K_{\psi,f}\ne \emptyset$. 
Then the lsc-regularized solution
of the $\K_{\psi,f}$-obstacle problem is continuous.
\end{thm}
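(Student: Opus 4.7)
Since the lsc-regularized solution $u := u^*$ of the $\K_{\psi,f}$-obstacle problem provided by Theorem~\ref{thm-obst-prob-bdd} is bounded and lower semicontinuous by construction, to establish continuity it suffices to prove upper semicontinuity of $u$ at every $x_0 \in \Om$. I would split the argument into two cases depending on whether the obstacle is active at $x_0$.

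\emph{Non-contact case.} Suppose $u(x_0) > \psi(x_0)$. Since $u$ is lsc and $\psi$ is upper semicontinuous (being continuous into $[-\infty,\infty)$), the set $\{u > \psi\}$ is open and contains a ball $B \Subset \Om$ about $x_0$. For any $\phi \in C_0^\infty(B)$ the compact support of $\phi$ lies in $\{u - \psi \ge \delta\}$ for some $\delta > 0$, so $u \pm t\phi \in \K_{\psi,f}$ for all sufficiently small $|t|$. Minimality of $u$ then forces equality in the variational inequality of Proposition~\ref{prop-sol-obst-iff-int}, making $u$ both a sub- and a supersolution on $B$, i.e.\ a solution of the $\px$-Laplace equation there. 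Interior H\"older continuity of $\px$-harmonic functions (Acerbi--Mingione~\cite{AM4}, Fan~\cite{Fan07}) then yields continuity of $u$ at $x_0$.

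\emph{Contact case.} Suppose $u(x_0) = \psi(x_0)$; the common value is finite since $u$ is bounded. Fix $\eps > 0$ and use the continuity of $\psi$ to choose $B := B(x_0,r) \Subset \Om$ with $\psi < \psi(x_0) + \eps$ on $B$. Let $h$ be the continuous $\px$-harmonic function in $B$ with $h - u \in \Wpx_0(B)$; by the comparison principle (Lemma~\ref{lem-comp-principle}), $h \le u$ in $B$, hence $h(x_0) \le u(x_0) = \psi(x_0)$. Form $v := \max\{h,\psi\}$ in $B$, extended by $u$ outside $B$. The squeezing Lemma~\ref{lem-police} applied to $h - u \le v - u \le 0$, together with $h - u \in \Wpx_0(B)$, shows $v - u \in \Wpx_0(B)$, so $v \in \K_{\psi,f}$ with $v \le u$ a.e. The goal is to upgrade this to the reverse a.e.-inequality $u \le v$ in $B$; once this is known, $u = v$ a.e.\ and, since $v$ is continuous and $u$ is lsc-regularized (by Theorem~\ref{thm-u*} together with Lemma~\ref{lem-quasicont-ae-qe}), the equality holds pointwise in $B$. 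Continuity of $h$ and $\psi$ then gives $\limsup_{x \to x_0} u(x) = \max\{h(x_0),\psi(x_0)\} = \psi(x_0)$, completing the proof.

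The main obstacle is establishing $u \le \max\{h,\psi\}$ a.e.\ in $B$. Minimality of $u$ in $\K_{\psi,f}$ gives $\int_B |\grad u|^{\p}\, dx \le \int_B |\grad v|^{\p}\, dx$. Since both $v$ and the reflection $2u - v$ belong to $\K_{\psi,f}$ (the latter because $v \le u$ and $u \ge \psi$ a.e.), testing the variational inequality of Proposition~\ref{prop-sol-obst-iff-int} in both directions yields the first-order identity
\[
\int_B p(x)\, |\grad u|^{p(x)-2}\, \grad u \cdot \grad(v - u)\, dx = 0.
\]
Combining this with the strict convexity of $\xi \mapsto |\xi|^{p(x)}$ (for $p(x) > 1$), the monotonicity of the $\px$-Laplace vector field, and the $\px$-harmonicity of $h$ tested against $u - h \in \Wpx_0(B)$, is the classical route to extracting $\grad u = \grad v$ a.e.\ in $B$, which together with $v - u \in \Wpx_0(B)$ forces $u = v$ a.e. This is the most delicate step; in the variable-exponent setting it relies on the log-H\"older continuity of $p$ to access the variable-exponent analogues of the convexity and monotonicity estimates used in the constant-exponent treatment of Heinonen--Kilpel\"ainen--Martio~\cite{HeKiMa}.
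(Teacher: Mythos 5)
The paper's own proof is a one-line citation of Theorem~4.11 in Harjulehto--H\"ast\"o--Koskenoja--Lukkari--Marola~\cite{HHKLM07}, which in turn follows a Michael--Ziemer-type oscillation-decay iteration; your proposal attempts a from-scratch argument, so the routes are genuinely different. The non-contact case in your proposal is sound (once one uses that $u-\psi$ is lower semicontinuous to get a positive infimum on $\supp\phi$), but the contact case contains a fatal error.

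The core of your contact-case argument is the claim that $u=\max\{H_B u,\psi\}$ a.e.\ in a small ball $B$ around the contact point. This is false in general, and the mechanism you propose for proving it does not work. You derive the first-order identity
\[
\int_B p(x)\,|\grad u|^{p(x)-2}\,\grad u\cdot\grad(v-u)\,dx=0
\]
by testing the variational inequality with $v$ and $2u-v$, and then assert that strict convexity upgrades this to $\grad u=\grad v$ a.e.\ The logic fails: strict convexity gives only
$\int_B|\grad v|^{\p}\,dx\ge\int_B|\grad u|^{\p}\,dx$ with equality iff $\grad u=\grad v$, which is the same inequality the minimality of $u$ already provides and carries no information in the other direction. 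A $1$-dimensional example makes this concrete: take $p\equiv2$, $\Om=(-1,1)$, $f=0$, $\psi(x)=\tfrac12-2x^2$. The solution $u$ coincides with $\psi$ on an interval $[-a,a]$ and is affine outside. At the contact point $x_0=a$ with $B=(a-\eps,a+\eps)$, a direct computation shows that $v:=\max\{H_Bu,\psi\}$ equals $\psi$ on $(a-\eps,a+\eps/2)$ and equals $H_Bu$ on $(a+\eps/2,a+\eps)$, that $v<u$ on all of $(a,a+\eps)$, and that nevertheless the first-order identity $\int_B u'(v'-u')\,dx=0$ holds. So $v\in\K_{\psi,f}$, $v\le u$, the identity holds, yet $u\ne v$ on a set of positive measure. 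Thus the step "extract $\grad u=\grad v$" cannot be completed, and the claimed conclusion $u\le\max\{H_B u,\psi\}$ is simply not true.

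There is also a secondary gap earlier in the same case: to invoke the squeezing Lemma~\ref{lem-police} on $h-u\le v-u\le 0$ you must already know that $v-u\in\Wpx(B)$, i.e.\ that $\max\{h,\psi\}\in\Wpx(B)$. The hypotheses on $\psi$ are only continuity and boundedness from above together with $\K_{\psi,f}\ne\emptyset$; this guarantees a Sobolev \emph{majorant} of $\psi$ but not that $\psi$ (or $\max\{h,\psi\}$) itself lies in $\Wpx$. This is precisely why the standard proofs for merely continuous obstacles avoid inserting $\psi$-dependent competitors and instead run an iteration comparing $u$ with $H_Bu$ on shrinking balls and controlling the oscillation of $u$ by that of $\psi$, as in Theorem~4.11 of~\cite{HHKLM07} which the paper cites.
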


\begin{proof}
See Theorem~4.11 in~\cite{HHKLM07}.
\end{proof}

\begin{remark} \label{rmk-soln-cont}
A direct consequence is that if $u$ is a 
locally bounded
solution, in the sense of Definition~\ref{deff-soln}, then $u^*$ is continuous.
Indeed, if $u$ is a solution, then it is locally a solution of an
unrestricted obstacle problem with itself as boundary values. 
Hence $u^*$ is locally continuous, i.e.\ continuous.
\end{remark}

\begin{deff}\label{Hf-defn}
Let $f \in \Wpx(\Om)$ be bounded.
Then we define the \emph{Sobolev solution} $\oHp f$ of the
Dirichlet problem with boundary values $f$ to be the continuous solution
of the $\K_{-\infty,f}$-obstacle problem.
\end{deff}

Note that $\oHp f$  depends also on $\px$. 
Since $u=\oHp f$ is a solution of the unrestricted obstacle problem, 
i.e.\ with obstacle $-\infty$, it follows 
that
\begin{equation}   \label{eq-obst=sol}
         \int_\Om  |\nabla u|^{\p}\, dx 
      \le \int_{\Om} |\nabla (u+\phi)|^{\p}\, dx.
\end{equation}
for all $\phi \in \Wpx_0(\Om)$ and in particular for all
$\phi \in C_0^\infty(\Om)$.
Subtracting
\[
\int_{A} |\nabla u|^{\p}\, dx = \int_{A} |\nabla (u+\phi)|^{\p}\, dx < \infty,
\]
where $A=\{x \in \Om : \phi(x)=0\}$,
from both sides of~\eqref{eq-obst=sol} 
shows that $u$ is a continuous solution 
in the sense of Definition~\ref{deff-soln}, i.e.\ 
a \px-harmonic function.

The following comparison principle will be important for us.

\begin{lem} \label{lem-comp-principle}
\textup{(Comparison principle)}
If $f_1,f_2 \in \Wpx(\Om)$ are bounded
and\/ $(f_1 -f_2)_\limplus \in \Wpx_0(\Om)$,
then $\oHp f_1 \le \oHp f_2$ in\/ $\Om$.
\end{lem}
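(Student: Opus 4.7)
My plan is to compare $u_1 := \oHp f_1$ and $u_2 := \oHp f_2$ by the standard lattice trick: show that $\min\{u_1,u_2\}$ is an admissible competitor for the variational problem defining $u_1$, and $\max\{u_1,u_2\}$ for the one defining $u_2$, then use the pointwise identity $|\grad \min\{u_1,u_2\}|^{\p}+|\grad \max\{u_1,u_2\}|^{\p}=|\grad u_1|^{\p}+|\grad u_2|^{\p}$ a.e.\ together with uniqueness of obstacle solutions to conclude $u_1\le u_2$ a.e., and finally invoke continuity.

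The crux is to show that $w:=(u_1-u_2)_\limplus$ lies in $\Wpx_0(\Om)$. Decomposing
\[
u_1-u_2 = (u_1-f_1) - (u_2-f_2) + (f_1-f_2)
\]
yields the pointwise bound $0 \le w \le (u_1-f_1)_\limplus + (f_1-f_2)_\limplus + (u_2-f_2)_\limminus$. Each term on the right-hand side belongs to $\Wpx_0(\Om)$: the middle one by hypothesis, and the other two because $u_i-f_i \in \Wpx_0(\Om)$ and the maps $v\mapsto v_\limplus$, $v\mapsto v_\limminus$ preserve $\Wpx_0(\Om)$. The latter follows from Proposition~\ref{prop-Wpx0}, since the zero extension of $v_\limpm$ to $\R^n$ is the corresponding part of the zero extension of $v$, which is quasicontinuous and lies in $\Wpx(\R^n)$; taking positive and negative parts preserves both properties. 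Since $w \in \Wpx(\Om)$ as well, the squeezing Lemma~\ref{lem-police} then delivers $w \in \Wpx_0(\Om)$.

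With this in hand, $v_1 := u_1 - w = \min\{u_1,u_2\}$ satisfies $v_1 - f_1 = (u_1 - f_1) - w \in \Wpx_0(\Om)$, so $v_1 \in \K_{-\infty, f_1}$; likewise $v_2 := u_2 + w = \max\{u_1,u_2\}$ belongs to $\K_{-\infty, f_2}$. The obstacle-problem inequalities $\int_\Om |\grad u_i|^{\p}\,dx \le \int_\Om |\grad v_i|^{\p}\,dx$ for $i=1,2$, summed and combined with the pointwise $\min/\max$ identity recalled above, force equality in both. Hence $v_1$ is itself a solution of the $\K_{-\infty, f_1}$-obstacle problem, so the uniqueness asserted in Theorem~\ref{thm-obst-prob-bdd} gives $v_1 = u_1$ a.e.\ in $\Om$, i.e.\ $w = 0$ a.e. Since $u_1$ and $u_2$ are continuous by definition of $\oHp$, the function $w$ is continuous, so $w \equiv 0$ in $\Om$, which is the desired conclusion $\oHp f_1 \le \oHp f_2$.

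The main obstacle is the first step: stability of $\Wpx_0(\Om)$ under lattice operations is subtler in the variable exponent setting than in the constant-exponent one, and the combination of the squeezing Lemma~\ref{lem-police} with the quasicontinuous extension characterization in Proposition~\ref{prop-Wpx0} is precisely what propagates the boundary hypothesis $(f_1-f_2)_\limplus \in \Wpx_0(\Om)$ to $w$; once that is in place, everything else follows from the variational definition and uniqueness of the Sobolev solution.
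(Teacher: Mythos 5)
Your proof is correct and rests on the same key ingredients as the paper's: admissibility of the lattice competitors via the squeezing Lemma~\ref{lem-police}, the $\min$/$\max$ energy identity, uniqueness from Theorem~\ref{thm-obst-prob-bdd}, and continuity to upgrade a.e.\ to everywhere. The route differs in two small but genuine ways. First, the paper does not prove Lemma~\ref{lem-comp-principle} directly; it derives it as the $\psi\equiv-\infty$ specialization of the obstacle comparison Lemma~\ref{lem-obst-le}, which the paper then proves by squeezing $\min\{u_1,u_2\}-f_1$ between $u_1-f_1\in\Wpx_0(\Om)$ from above and $\min\{u_1-f_1,\,u_2-f_2-(f_1-f_2)_\limplus\}\in\Wpx_0(\Om)$ from below. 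You instead squeeze $w=(u_1-u_2)_\limplus$ between $0$ and $(u_1-f_1)_\limplus+(f_1-f_2)_\limplus+(u_2-f_2)_\limminus$, which is a somewhat cleaner, symmetric bound, and then pass from $w\in\Wpx_0(\Om)$ to admissibility of $\min\{u_1,u_2\}$ and $\max\{u_1,u_2\}$. Second, in the energy step the paper works with the set $A=\{u_1>u_2\}$ and splits the integrals, while you sum the two obstacle inequalities and invoke the pointwise identity $|\grad\min|^{\p}+|\grad\max|^{\p}=|\grad u_1|^{\p}+|\grad u_2|^{\p}$; these are equivalent bookkeeping choices. Your version buys a direct, self-contained proof of the Dirichlet comparison but gives up the extra generality of the obstacle version, which the paper keeps and uses to state the pointwise comparison of lsc-regularizations.
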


It follows that if $f_1,f_2 \in \Lip(\overline{\Om})$ and
$f_1=f_2$ on $\bdy \Om$,
then $\oHp f_1 = \oHp f_2$.
We can therefore define $\oHp f$ for $f \in \Lip(\bdy \Om)$
to be $\oHp \ft$ for any extension $\ft \in \Lip(\overline{\Om})$
such that $\ft=f$ on $\bdy \Om$.
Among such extensions are the so-called McShane extensions,
see e.g.\ Theorem~6.2 in Heinonen~\cite{heinonen}.

In view of Lemma~\ref{lem-Wpx0-qe-on-bdry}, $(f_1-f_2)_\limplus\in\Wpx_0(\Om)$
whenever $f_1, f_2 \in\Wpx(\Om)$ are quasicontinuous in $\R^n$ and $f_1\le f_2$ 
q.e.\ on $\bdry\Om$.

The following generalization of the comparison principle above is sometimes
useful. 
Even though we will not use it in this paper,
we have chosen to include it here since the proof of it
is not more involved than a direct proof of Lemma~\ref{lem-comp-principle}.

\begin{lem} \label{lem-obst-le}
\textup{(Comparison principle for obstacle problems)}
Let $\psi_j \colon \Om \to [-\infty,\infty)$ be bounded
from above and $f_j \in \Wpx(\Om)$ be bounded and
such that $\K_{\psi_j,f_j} \ne \emptyset$. 
Let further $u_j$ be a
solution of the $\K_{\psi_j,f_j}$-obstacle problem, $j=1,2$.
If  $\psi_1 \le \psi_2$ a.e.\ in\/ $\Om$
and\/ $(f_1-f_2)_\limplus \in \Wpx_0(\Om)$,
then $u_1 \le u_2$ a.e.\ in\/ $\Om$.

Moreover, the lsc-regularizations satisfy $u^*_1\le u^*_2$
everywhere in\/ $\Om$.
\end{lem}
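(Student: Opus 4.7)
My plan is to follow the standard ``exchange test function'' strategy: consider the truncations $v_1 := \min\{u_1,u_2\}$ and $v_2 := \max\{u_1,u_2\}$, and show that $v_j \in \K_{\psi_j,f_j}$, so that they are admissible competitors for $u_1$ and $u_2$ respectively. Once this is established, adding the minimization inequalities
\[
\int_\Om |\grad u_j|^{p(x)}\,dx \le \int_\Om |\grad v_j|^{p(x)}\,dx, \quad j=1,2,
\]
and using the pointwise identity $|\grad v_1|^{p(x)} + |\grad v_2|^{p(x)} = |\grad u_1|^{p(x)} + |\grad u_2|^{p(x)}$ a.e.\ (which holds because on $\{u_1\le u_2\}$ one has $\grad v_1 = \grad u_1$ and $\grad v_2 = \grad u_2$ a.e., and analogously on $\{u_1 > u_2\}$), forces equality. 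Thus $v_1$ is also a solution of the $\K_{\psi_1,f_1}$-obstacle problem, and the uniqueness part of Theorem~\ref{thm-obst-prob-bdd} gives $v_1 = u_1$ a.e., i.e.\ $u_1 \le u_2$ a.e.

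The obstacle inequalities for $v_j$ are automatic: $v_1 \ge \min\{\psi_1,\psi_2\} = \psi_1$ and $v_2 \ge \max\{\psi_1,\psi_2\} = \psi_2$ a.e. The main obstacle, as expected, will be verifying the boundary-value conditions $v_1 - f_1, v_2 - f_2 \in \Wpx_0(\Om)$, since we only have the one-sided hypothesis $(f_1 - f_2)_\limplus \in \Wpx_0(\Om)$. The key identity is
\[
v_1 - f_1 = (u_1-f_1) - (u_1-u_2)_\limplus, \qquad
v_2 - f_2 = (u_2-f_2) + (u_1-u_2)_\limplus,
\]
so everything reduces to showing $(u_1-u_2)_\limplus \in \Wpx_0(\Om)$. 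Writing $u_1 - u_2 = [(u_1-f_1) - (u_2-f_2)] + (f_1-f_2)$ and taking positive parts gives the pointwise bound
\[
0 \le (u_1-u_2)_\limplus \le \bigl[(u_1-f_1)-(u_2-f_2)\bigr]_\limplus + (f_1-f_2)_\limplus,
\]
where the right-hand side lies in $\Wpx_0(\Om)$ (the first term because $u_j - f_j \in \Wpx_0(\Om)$ and positive parts and differences preserve this space, the second by assumption). Since $(u_1-u_2)_\limplus$ itself belongs to $\Wpx(\Om)$, the squeezing Lemma~\ref{lem-police} gives $(u_1-u_2)_\limplus \in \Wpx_0(\Om)$, completing the admissibility check.

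For the final assertion about lsc-regularizations, the inequality $u_1 \le u_2$ a.e.\ passes to the essential $\liminf$ at every point: for each $x\in\Om$, all sets entering the definition of $u_j^*(x)$ differ only by null sets, so
\[
u_1^*(x) = \essliminf_{y\to x} u_1(y) \le \essliminf_{y\to x} u_2(y) = u_2^*(x).
\]
Lemma~\ref{lem-comp-principle} is then recovered as the special case $\psi_1 = \psi_2 = -\infty$ (here the $\oHp f_j$ are continuous, and continuous functions equal their own lsc-regularization).
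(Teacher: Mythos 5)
Your argument is correct and is essentially the paper's proof: both swap to the test functions $\min\{u_1,u_2\}$ and $\max\{u_1,u_2\}$, check admissibility via the squeezing Lemma~\ref{lem-police}, and conclude by the uniqueness in Theorem~\ref{thm-obst-prob-bdd}. The only cosmetic differences are that you sum both minimization inequalities and invoke the pointwise energy identity, whereas the paper uses only the inequality for $u_2$ and splits the integral over $A=\{u_1>u_2\}$, and that you route the $\Wpx_0$-admissibility through $(u_1-u_2)_\limplus$ while the paper squeezes $u-f_1$ directly between $u_1-f_1$ and $\min\{u_1-f_1,\,u_2-f_2-(f_1-f_2)_\limplus\}$.
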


\begin{proof}[Proof of Lemma~\ref{lem-comp-principle}]
Let 
$\psi_1 = \psi_2 \equiv -\infty$.
After noting that $u_1^*= \oHp f_1$ and $u_2^*= \oHp f_2$ the result
follows from (the last part of) Lemma~\ref{lem-obst-le}.
\end{proof}

\begin{proof}[Proof of Lemma~\ref{lem-obst-le}]
Let $u=\min\{u_1,u_2\}$. Then
\begin{align*}
\Wpx_0(\Om) \ni u_1-f_1 \ge u-f_1 &= \min\{u_1-f_1,u_2-f_1\} \\
&\ge \min\{u_1-f_1,u_2-f_2 - (f_1-f_2)_\limplus\} \in\Wpx_0(\Om).
\end{align*}
Lemma~\ref{lem-police} implies that $u-f_1\in\Wpx_0(\Om)$.
As $u \ge \psi_1$ a.e.\ in $\Om$, we get that $u \in \K_{\psi_1,f_1}$.
Similarly $v=\max\{u_1,u_2\} \in \K_{\psi_2,f_2}$.

Let $A=\{x \in \Om : u_1(x) > u_2(x)\}$.
Since $u_2$ is a solution of the $\K_{\psi_2,f_2}$-obstacle problem,
we have that
\[
         \int_\Om |\grad u_2(x)|^{p(x)} \, dx
         \le  \int_\Om |\grad v(x)|^{p(x)} \, dx
         =  \int_A |\grad u_1(x)|^{p(x)} \, dx
         +  \int_{\Om \setm A} |\grad u_2(x)|^{p(x)} \, dx.
\]
Thus
\[
        \int_A |\grad u_2(x)|^{p(x)} \, dx
         \le  \int_A |\grad u_1(x)|^{p(x)} \, dx.
\]
It follows that
\begin{align*}
\int_\Om |\grad u(x)|^{p(x)} \, dx
        &=       \int_A |\grad u_2(x)|^{p(x)} \, dx
        +       \int_{\Om \setm A} |\grad u_1(x)|^{p(x)} \, dx 
\le       \int_\Om |\grad u_1(x)|^{p(x)} \, dx.
\end{align*}
As $u_1$ is a solution of the $\K_{\psi_1,f_1}$-obstacle problem, so is $u$.
By the uniqueness in Theorem~\ref{thm-obst-prob-bdd}, we have
\[   
    u_1=u=\min\{u_1,u_2\}
    \quad \text{a.e.\ in } \Om,
\] 
and thus $u_1 \le u_2$ a.e.\ in $\Om$.

The pointwise comparison of the lsc-regularizations follows directly
from their definitions and the above a.e.-inequality.
\end{proof}

\section{Superharmonic functions}
\label{sect-superh}

In this section we consider superharmonic functions and show that they
are lsc-regularized. This in turn leads to the characterization of bounded
superharmonic functions advertised in Theorem~\ref{thm-superh-char-bdd},
and to another characterization of general superharmonic functions.

\begin{deff} \label{def-superharm}
A function $u \colon \Om \to (-\infty,\infty]$ is
\emph{superharmonic}
in $\Om$ if
\begin{enumerate}
\renewcommand{\theenumi}{\textup{(\roman{enumi})}}%
\item \label{cond-a} $u$ is lower semicontinuous;
\item \label{cond-b}
 $u$ is finite almost everywhere;
\item \label{cond-c}
for every nonempty open set $\Om' \Subset \Om$
and all functions
$v \in C(\clOmprime)$ which are 
\px-harmonic in $\Om'$ 
and satisfy $v \le u$ on $\bdy \Om'$,
it is true that $v \le u$ in $\Om'$.
\end{enumerate}
A function $u \colon \Om \to [-\infty,\infty)$ is
\emph{subharmonic}
if $-u$ is superharmonic.
\end{deff}

In the variable exponent literature superharmonic functions are often
assumed to belong to $L^t\loc(\Om)$ for some $t>0$,
see e.g.\ Latvala--Lukkari--Toivanen~\cite{LLT}. 
For our purposes the more general definition above is sufficient.
In the constant $p$ case condition \ref{cond-b} is usually replaced
by the equivalent condition%
\medskip
\begin{enumerate}
\stepcounter{enumi}
\renewcommand{\theenumi}{\textup{(\roman{enumi}$'$)}}%
\item \label{cond-b'}
$u \not \equiv \infty$ in every
component of $\Om$. 
\end{enumerate}
\medskip
Whether this equivalence is true also for variable exponents is not known.
However, for the results in this paper we could as well have
replaced \ref{cond-b} by \ref{cond-b'}
and required that $u$ in Theorem~\ref{thm-superh-supermin-char}
satisfies \ref{cond-b'}.

The following lemma is well known and easily proved directly from the definition.

\begin{lem} \label{lem-superh-min}
If   $u$ and $v$ are superharmonic,
then so is\/ $\min\{u,v\}$.
\end{lem}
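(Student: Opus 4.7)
The plan is to verify the three conditions \ref{cond-a}--\ref{cond-c} of Definition~\ref{def-superharm} directly for $w:=\min\{u,v\}$, exploiting the fact that each of them transfers from $u$ and $v$ individually by a one-line argument.

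First I would handle \ref{cond-a}: the minimum of two lower semicontinuous functions is lower semicontinuous (standard point-set topology, since $\{w > t\} = \{u > t\} \cap \{v > t\}$ is an intersection of open sets). For \ref{cond-b}, note that the sets $N_u := \{u = \infty\}$ and $N_v := \{v = \infty\}$ have measure zero, so $\{w = \infty\} = N_u \cap N_v$ also has measure zero; hence $w$ is finite a.e.

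The heart of the argument is \ref{cond-c}. Fix a nonempty open set $\Om' \Subset \Om$ and $h \in C(\overline{\Om'})$ which is \px-harmonic in $\Om'$ with $h \le w$ on $\bdy\Om'$. Then $h \le u$ on $\bdy\Om'$, so the superharmonicity of $u$ gives $h \le u$ in $\Om'$; the same reasoning with $v$ in place of $u$ gives $h \le v$ in $\Om'$. Taking the pointwise minimum yields $h \le \min\{u,v\} = w$ in $\Om'$, as required.

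There is essentially no serious obstacle here: the comparison condition \ref{cond-c} is a property tested against \emph{each} admissible \px-harmonic function $h$ separately, so it is automatically preserved under pointwise minima (one never needs to invoke any nonlinear structure of the \px-Laplacian). The only very minor point worth noting is that the extended real-valued finiteness condition \ref{cond-b} is automatic, whereas the alternative formulation \ref{cond-b'} (``$u \not\equiv \infty$ on each component'') would require a slightly less trivial argument; since the paper explicitly adopts \ref{cond-b}, no extra work is needed.
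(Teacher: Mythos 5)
Your proof is correct and is exactly the direct verification from Definition~\ref{def-superharm} that the paper indicates (the paper states the lemma is ``well known and easily proved directly from the definition'' and gives no proof). All three conditions are handled properly, including the observation that \ref{cond-c} transfers verbatim since it is tested against each comparison function separately.
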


The following result is well known for constant $p$, but seems to be new in the variable exponent setting.

\begin{thm} \label{thm-superh-lsc-reg}
If a function is superharmonic, then it is lsc-regularized.
\end{thm}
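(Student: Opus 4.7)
The inequality $u \le u^*$ is automatic: lower semicontinuity of $u$ gives $u(x) \le \liminf_{y \to x} u(y)$, and the essential infimum dominates the infimum, so $u(x) \le u^*(x)$ at every $x \in \Om$. The content of the theorem is thus the reverse inequality $u \ge u^*$, which I will establish by contradiction.

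Suppose $u(x_0) < u^*(x_0)$ at some $x_0 \in \Om$, pick $\lambda$ strictly between these values, and by the definition of $u^*$ choose $r > 0$ with $B(x_0, 2r) \Subset \Om$ and $u \ge \lambda$ almost everywhere on $B(x_0, 2r)$. Applying Fubini's theorem in polar coordinates to the null set $\{u < \lambda\} \cap B(x_0, 2r)$, I can select a radius $\rho \in (r, 2r)$ such that $u \ge \lambda$ surface-almost everywhere on the sphere $S := \bdy B(x_0, \rho)$. Since $u|_S$ is lsc on the compact sphere $S$, and hence bounded below there, a standard lsc approximation produces an increasing sequence of Lipschitz functions $\phi_k$ on $S$ with $\phi_k \le u$ pointwise and $\phi_k \uparrow u$ pointwise on $S$; truncating if necessary, I may assume $\phi_k \le \lambda$ on $S$. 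Extending each $\phi_k$ to a Lipschitz function on $\overline{B(x_0, \rho)}$, let $v_k := \oHp \phi_k$ be the continuous $\px$-harmonic Dirichlet solution on $B(x_0, \rho)$.

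Applying the superharmonicity condition~\ref{cond-c} to $u$ on $B(x_0, \rho)$ with test function $v_k$ (whose boundary values $v_k|_S = \phi_k \le u$ pointwise on $S$) yields $v_k(x_0) \le u(x_0)$ for each $k$. The comparison principle (Lemma~\ref{lem-comp-principle}) shows that $\{v_k\}$ is increasing and bounded above by $\oHp \lambda \equiv \lambda$, so it converges pointwise to some $v_\infty \le \lambda$ with $v_\infty(x_0) \le u(x_0) < \lambda$. The contradiction will arise from showing that in fact $v_\infty(x_0) = \lambda$.

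The main obstacle is precisely this last step: a continuity statement for the $\px$-Dirichlet problem with respect to surface-almost-everywhere convergent boundary data. Since $\lambda - \phi_k$ is nonnegative, uniformly bounded, and tends to $0$ surface-almost everywhere on $S$, the corresponding Dirichlet solutions $\lambda - v_k = \oHp(\lambda - \phi_k)$ should tend to $0$ in the interior. I would prove this via a variational/energy argument based on Proposition~\ref{prop-super-sol}: since $v_k$ minimizes the $\px$-Dirichlet integral subject to its boundary trace, admissible competitors obtained by modifying $\lambda$ near $S$ so as to match $\phi_k$ there (constructed with the help of Lemma~\ref{lem-police} and the $\Wpx_0$-characterizations of Lemma~\ref{lem-Wpx0-qe-on-bdry} and Proposition~\ref{prop-Wpx0}) force $\int |\grad(\lambda - v_k)|^{\p}\, dx \to 0$. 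This yields $v_k \to \lambda$ in $\Wpxloc$, and the interior regularity of $\px$-harmonic functions (Remark~\ref{rmk-soln-cont}) then gives the pointwise convergence $v_k(x_0) \to \lambda$, completing the contradiction.
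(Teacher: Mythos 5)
Your opening observation ($u \le u^*$ from lower semicontinuity) and the reduction to $u \ge u^*$ by contradiction are both correct, but the core of the argument has two serious problems, one of which you acknowledge and one of which you do not.

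The unacknowledged problem comes first, and it is a real obstruction: to invoke condition~\ref{cond-c} of Definition~\ref{def-superharm} on $B(x_0,\rho)$ you need the test function $v_k$ to lie in $C\bigl(\overline{B(x_0,\rho)}\bigr)$ and to satisfy $v_k \le u$ pointwise on the sphere $S$. The Sobolev solution $\oHp\tilde\phi_k$ of Definition~\ref{Hf-defn} is a priori only continuous \emph{inside} $B(x_0,\rho)$; continuity up to the sphere amounts to boundary regularity of every point of $S$ for the variable-exponent Dirichlet problem. For constant $p$ this is classical, but in the $\px$-setting it follows (as far as is known) from the Wiener-type criterion of Alkhutov--Krasheninnikova, which this paper is deliberately avoiding, and which in any case has not been established at the point where Theorem~\ref{thm-superh-lsc-reg} is needed (it is used later in Section~\ref{sect-cont-bdy-values} onward, all under the running assumption of the Kellogg machinery being already in place). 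So as written, $v_k$ is not an admissible competitor in \ref{cond-c}, and the step ``$v_k(x_0)\le u(x_0)$'' is not justified. Attempting to repair this by working on a slightly smaller ball $B(x_0,\rho-\eps)$ fails for a different reason: there you only know $u\ge\lambda$ a.e., which does not give the \emph{pointwise} inequality $v_k\le u$ on the sphere $\partial B(x_0,\rho-\eps)$ that \ref{cond-c} demands.

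The acknowledged gap is equally serious and, in my view, cannot be closed cheaply. You need: if $0\le g_k\le\lambda$ are Lipschitz on $S$ with $g_k\downarrow 0$ surface-a.e., then $\oHp g_k(x_0)\to 0$. For $p\equiv 2$ this is the dominated convergence theorem against the Poisson kernel; already for constant $p\ne2$ there is no integral representation and the statement is a genuine resolutivity/convergence theorem. The energy sketch you outline would require producing competitors $w_k\in\Wpx(B(x_0,\rho))$ with the same boundary trace as $g_k$ and $\int|\grad w_k|^{\p}\,dx\to 0$. Nothing in the hypotheses gives you that: surface-a.e.\ convergence of $g_k$ to zero carries no control on the gradients of extensions, and the exceptional set $\{u<\lambda\}\cap S$ can be geometrically wild (it is only a surface null set). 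Lemma~\ref{lem-police}, Lemma~\ref{lem-Wpx0-qe-on-bdry} and Proposition~\ref{prop-Wpx0} all concern $\Wpx_0$-membership and are of no direct help in making the Dirichlet energy small.

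The paper's proof avoids both difficulties by never touching spherical Dirichlet problems at all. After shifting so that $v=u-(u(x_0)-\de)$ is a bounded nonnegative superharmonic function near $x_0$, it approximates $v$ from below by an increasing sequence of \emph{continuous supersolutions} $v_j$ (Theorem~6.5 in~\cite{HHKLM07}), applies the weak Harnack inequality
\[
\vint_{B(x_0,2R)} v_j^q\,dx \le C\Bigl(\essinf_{B(x_0,R)} v_j + R\Bigr)^q
\]
from~\cite{HaKiLu07}, and then exploits the \emph{continuity of} $v_j$ to bound the right-hand side by $C\bigl(v_j(x_0)+R\bigr)^q\le C(\de+R)^q$. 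Letting $j\to\infty$, then $R\to 0$, then $\de\to 0$ gives $u(x_0)\ge u^*(x_0)$ for bounded $u$; the unbounded case is handled by truncation via Lemma~\ref{lem-superh-min}. This is purely local and measure-theoretic, it never invokes boundary behaviour of Dirichlet solutions, and it is precisely what lets the paper avoid the Wiener criterion. If you want to salvage your approach, you would in effect have to re-derive a significant chunk of variable-exponent boundary regularity theory before you could even state the estimate you need, which defeats the purpose of proving this theorem at this stage.
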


\begin{proof}
Let $u$ be a superharmonic function and $x_0 \in \Om$ be arbitrary.
Since $u$ is lower semicontinuous, 
\[
     u(x_0) \le \liminf_{y \to x_0} u(y) \le \essliminf_{y \to x_0} u(y)=:u^*(x_0).
\]

In order to obtain the converse inequality we assume first that $u$ is bounded from above.
Without loss of generality we can assume that $u(x_0) >0$.
Let $0 < \de \le  u(x_0) $ be arbitrary.
By the lower semicontinuity of $u$, we can find a ball $B\ni x_0$
such that $2B\Subset\Om$ and $u\ge u(x_0)-\de$ in $2B$.
Then $v=u-(u(x_0)-\de)$ is a bounded nonnegative superharmonic function in $2B$.

Theorem~6.5 in Harjulehto--H\"ast\"o--Koskenoja--Lukkari--Marola~\cite{HHKLM07}
provides us with an increasing sequence of continuous supersolutions
$v_j$ in $B$ such that $v_j\nearrow v$ everywhere in $B$.
Theorem~3.7 and Remark~3.8 in Harjulehto--Kinnunen--Lukkari~\cite{HaKiLu07} 
imply the following weak Harnack inequality for sufficiently small $R>0$ and some $q>0$,
\begin{equation}   \label{eq-weak-Harnack}
\vint_{B(x_0,2R)} v_j^q\,dx  
         \le C \Bigl( \essinf_{B(x_0, R)} v_j + R \Bigr)^q,
\end{equation}
where the constants $q$ and $C$ depend on the bound for $v$, 
but not on $R$. Indeed, the proof of Lemma~3.6 in~\cite{HaKiLu07} reveals that
for a bounded $v$, the $L^s(B(x_0, 4R))$-norm in (3.33) in~\cite{HaKiLu07} 
can be substituted by the $L^{\infty}(\Om)$-norm, which gives the independence 
of $q$ on $R$.
We can clearly assume that $q<1$.
Since $v_j$ is continuous, the right-hand side in~\eqref{eq-weak-Harnack} 
is majorized by
\[
C(v_j(x_0)+R)^q \le C(v(x_0)+R)^q = C(\de+R)^q \le C(\de^q+R^q).
\]
Inserting this into~\eqref{eq-weak-Harnack}  and letting $j\to\infty$ 
gives
\[
C(\de^q+R^q) \ge \vint_{B(x_0,2R)} (u-(u(x_0)-\de))^q\,dx  
\ge \vint_{B(x_0,2R)} u^q\,dx  - (u(x_0)-\de)^q.
\] 
Hence
\[
(u(x_0)-\de)^q + C\de^q \ge \vint_{B(x_0, 2R)} u^q\,dx - CR^q
\ge \Bigl( \essinf_{B(x_0, 2R)}u \Bigr)^q - CR^q \to u^*(x_0)^q, 
\]
as $R\to0$.
Since $\de$ was arbitrary, we conclude that $u(x_0)\ge u^*(x_0)$
if $u$ is bounded from above.

Let us now consider the case when $u$ is unbounded.
Let $a < u^*(x_0)$ be real. 
Then $u_a:=\min\{u,a\}$ is superharmonic, by Lemma~\ref{lem-superh-min},
and thus $u_a$ is lsc-regularized by the first part of the proof.
Hence
\[
    u(x_0) \ge u_a(x_0) = \essliminf_{y \to x_0} u_a(y) 
    = \min\Bigl\{a, \essliminf_{y \to x_0} u(y)\Bigr\} 
    = \min \{a,u^*(x_0)\} = a.
\]
As $a$ was arbitrary we see that $u(x_0) \ge u^*(x_0)$.
\end{proof}

We are now ready to obtain the characterization of 
superharmonic functions in 
Theorem~\ref{thm-superh-char-bdd},
i.e.\  that a function  locally bounded from above is superharmonic
if and only if it is an lsc-regularized supersolution.

\begin{proof}[Proof of Theorem~\ref{thm-superh-char-bdd}.]
Assume first that $u$ is superharmonic.
Then $u$ is lsc-regu\-lar\-ized by Theorem~\ref{thm-superh-lsc-reg}.
That $u$ is locally bounded from below follows directly
from the lower semicontinuity (and the fact that $u$ does not take the
value $-\infty$).
Hence $u$ is locally bounded and Corollary~6.6 in 
Harjulehto--H\"ast\"o--Koskenoja--Lukkari--Marola~\cite{HHKLM07}
shows that $u$ a supersolution.

The converse follows directly from
Theorem~6.1 in \cite{HHKLM07}.
\end{proof}

For unbounded functions the characterization is 
(necessarily)
a bit more involved. 

\begin{thm} \label{thm-superh-supermin-char}
Let $u\colon \Om \to (-\infty,\infty]$ be a function
which is finite a.e.
Then the following are equivalent\/\textup{:}
\begin{enumerate}
\item \label{item-superh}
$u$ is superharmonic in\/ $\Om$\textup{;}
\item \label{item-superh-min}
$\min\{u,k\}$ is superharmonic in\/ $\Om$ for all $k=1,2,\ldots$\,\textup{;}
\item \label{item-lsc-reg}
$u$ is lsc-regularized\textup{,} and\/
$\min\{u,k\}$ is a supersolution in\/ $\Om$ for all $k=1,2,\ldots$\,\textup{;}
\item \label{item-lsc-reg-min}
$\min\{u,k\}$ is an lsc-regularized 
supersolution in\/ $\Om$ for all $k=1,2,\ldots$\,\textup{.}
\end{enumerate}
\end{thm}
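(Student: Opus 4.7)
The plan is to establish the cycle \ref{item-superh}$\Rightarrow$\ref{item-superh-min}$\Rightarrow$\ref{item-lsc-reg-min}$\Rightarrow$\ref{item-lsc-reg}$\Rightarrow$\ref{item-superh}, so that each hypothesis is used at least once and Theorem~\ref{thm-superh-char-bdd} can be invoked on the truncations $\min\{u,k\}$, which are always locally bounded from above.

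The implication \ref{item-superh}$\Rightarrow$\ref{item-superh-min} is immediate from Lemma~\ref{lem-superh-min} applied to $u$ and the constant $k$ (which is trivially superharmonic). Next, \ref{item-superh-min}$\Rightarrow$\ref{item-lsc-reg-min} follows by applying Theorem~\ref{thm-superh-char-bdd} to the bounded-from-above superharmonic function $\min\{u,k\}$.

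For \ref{item-lsc-reg-min}$\Rightarrow$\ref{item-lsc-reg}, the statement that $\min\{u,k\}$ is a supersolution is automatic, so only $u=u^*$ needs verification. Fix $x_0\in\Om$. The hypothesis gives
\[
    \min\{u(x_0),k\} = \essliminf_{y\to x_0}\min\{u(y),k\}
\]
for every $k$. The key small lemma is that for any continuous nondecreasing $g\colon\R\to\R$ one has $\essliminf_{y\to x_0}g(u(y))=g(u^*(x_0))$, a short one-sided continuity argument using $u>u^*(x_0)-\eps$ a.e.\ locally (for $\ge$) and the existence of positive-measure sets on which $u<u^*(x_0)+\eps$ (for $\le$). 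Applied with $g(t)=\min\{t,k\}$, the right-hand side becomes $\min\{u^*(x_0),k\}$; letting $k\to\infty$ and using monotone convergence on both sides yields $u(x_0)=u^*(x_0)$, treating the cases $u(x_0)<\infty$ and $u(x_0)=\infty$ uniformly.

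For \ref{item-lsc-reg}$\Rightarrow$\ref{item-superh}, conditions \ref{cond-a} and \ref{cond-b} of Definition~\ref{def-superharm} hold trivially (lower semicontinuity from $u=u^*$; finiteness a.e.\ is a standing hypothesis). To verify the comparison condition \ref{cond-c}, let $\Om'\Subset\Om$ be nonempty open and $v\in C(\clOmprime)$ be \px-harmonic in $\Om'$ with $v\le u$ on $\bdry\Om'$. Since $v$ is bounded on the compact set $\clOmprime$, pick $k>\sup_{\clOmprime}v$. The same commutation identity as above gives $(\min\{u,k\})^*(x_0)=\min\{u^*(x_0),k\}=\min\{u(x_0),k\}$, so $\min\{u,k\}$ is an lsc-regularized supersolution; being also bounded from above by $k$, Theorem~\ref{thm-superh-char-bdd} makes it superharmonic. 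Since $\min\{u,k\}\ge\min\{v,k\}=v$ on $\bdry\Om'$, the comparison property for the superharmonic $\min\{u,k\}$ yields $v\le\min\{u,k\}\le u$ in $\Om'$. The main obstacle is the commutation of $\essliminf$ with truncation used in both of the last two steps; everything else reduces to invoking Theorem~\ref{thm-superh-char-bdd} and Lemma~\ref{lem-superh-min}.
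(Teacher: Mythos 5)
Your proof is correct and uses essentially the same ingredients as the paper's: Lemma~\ref{lem-superh-min} for \ref{item-superh}$\Rightarrow$\ref{item-superh-min}, Theorem~\ref{thm-superh-char-bdd} for \ref{item-superh-min}$\Leftrightarrow$\ref{item-lsc-reg-min}, the $k>\sup_{\clOmprime}v$ truncation trick for the comparison condition, and the commutation $(\min\{u,k\})^*=\min\{u^*,k\}$ to pass between lsc-regularization of $u$ and of its truncations. The one genuine structural difference is that you arrange the implications in a single cycle \ref{item-superh}$\Rightarrow$\ref{item-superh-min}$\Rightarrow$\ref{item-lsc-reg-min}$\Rightarrow$\ref{item-lsc-reg}$\Rightarrow$\ref{item-superh}, which forces you to prove \ref{item-lsc-reg-min}$\Rightarrow$\ref{item-lsc-reg} (recovering $u=u^*$ from $\min\{u,k\}=(\min\{u,k\})^*$ for all $k$, by letting $k\to\infty$), whereas the paper proves the reverse implication \ref{item-lsc-reg}$\Rightarrow$\ref{item-lsc-reg-min} and also records \ref{item-superh-min}$\Rightarrow$\ref{item-superh} and \ref{item-superh}$\Rightarrow$\ref{item-lsc-reg} separately. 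Both directions of the commutation you state are indeed valid, but note that your sketched argument ("positive-measure sets on which $u<u^*(x_0)+\eps$") implicitly assumes $u^*(x_0)<\infty$; the case $u^*(x_0)=\infty$ needs a one-line separate remark (then $\essinf_{B(x_0,r)}\min\{u,k\}\to k=\min\{\infty,k\}$), which the paper sidesteps by only ever using the commutation in the direction $u=u^*\Rightarrow\min\{u,k\}=(\min\{u,k\})^*$. The cyclic arrangement is marginally more economical in that lower semicontinuity in \ref{item-lsc-reg}$\Rightarrow$\ref{item-superh} is immediate from $u=u^*$, whereas the paper's \ref{item-superh-min}$\Rightarrow$\ref{item-superh} derives it from $u=\sup_k\min\{u,k\}$ being a supremum of lsc functions.
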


\begin{proof}
\ref{item-superh} $\imp$ \ref{item-superh-min}
This follows from Lemma~\ref{lem-superh-min}.

\ref{item-superh-min} $\imp$ \ref{item-superh} 
That $u$ is lower semicontinuous follows directly from 
the fact that $\min\{u,k\}$, $k=1,2,\ldots$, are lower semicontinuous.
Let next $\Om' \Subset \Om$ be a nonempty open set 
and 
$v \in C(\clOmprime)$ be \px-harmonic
in $\Om'$ 
satisfying  $v \le u$ on $\bdy \Om'$.
Let $m=\sup_{\clOmprime} v < \infty$ and let $k>m$ be a positive integer.
Then $v \le \min\{u,k\}$ on $\bdy \Om'$.
Since $\min\{u,k\}$ is superharmonic it follows that
$v \le \min\{u,k\} \le u$ in $\Om'$.
Thus $u$ is superharmonic.

\ref{item-superh-min} $\eqv$ \ref{item-lsc-reg-min} 
This follows from Theorem~\ref{thm-superh-char-bdd}.

\ref{item-superh} $\imp$ \ref{item-lsc-reg}
That $u$ is lsc-regularized follows from Theorem~\ref{thm-superh-lsc-reg}.
That $\min\{u,k\}$ is a supersolution follows from 
the already shown implication
\ref{item-superh} $\imp$ \ref{item-lsc-reg-min}.

\ref{item-lsc-reg} $\imp$ \ref{item-lsc-reg-min} 
It is enough to show that $\min\{u,k\}$ is lsc-regularized, but this
follows directly from the fact that $u$ is lsc-regularized.
\end{proof}

\section{The Kellogg property}
\label{sect-cont-bdy-values}

\emph{From now on we assume that\/ $\Om$ is a bounded nonempty
open set.}

\medskip

In this section we extend the definition of Sobolev solutions of the
Dirichlet problem (Definition~\ref{Hf-defn})
to continuous boundary data and show that 
the solutions are
\px-harmonic.
We also introduce regular and irregular boundary points and prove the
Kellogg property.

\begin{deff} \label{def Hp cont}
Given $f \in C(\bdy \Om)$, define $\oHp f \colon \Om \to \R$ by
\[
\oHp f (x) 
       = \sup_{\Lip(\bdy \Om) \ni \phi \le f} 
                                              \oHp \phi(x),
       \quad x \in \Om.
\] 
\end{deff}

Here we abuse notation, since if 
$f \in \Wpx(\Om)$, then $\oHp f$ has already been defined by Definition~\ref{Hf-defn}.
However, as continuous functions can be uniformly
approximated by Lipschitz functions, 
the comparison
principle 
(Lemma~\ref{lem-comp-principle}),
together with the fact that $\oHp (f+a) = \oHp f +a$ for $a \in \R$, 
shows that the two definitions of $\oHp f$ coincide
in this case.

The comparison principle (Lemma~\ref{lem-comp-principle}) extends immediately  
to functions in $C(\bdy \Om)$
in the following way.

\begin{lem} \label{lem-comp-principle-cont}
\textup{(Comparison principle)}
If $f_1,f_2\in C(\bdy \Om)$ and 
$f_1\le f_2$ q.e.\ on $\bdy \Om$, 
then $\oHp f_1\le \oHp f_2$ in\/ $\Om$.
\end{lem}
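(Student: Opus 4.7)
The plan is to reduce the continuous-boundary-data comparison to the Lipschitz case already handled by Lemma~\ref{lem-comp-principle}, via a one-sided Lipschitz approximation of $f_2$. Fix $\eps>0$. Since $\bdry\Om$ is compact and $f_2\in C(\bdry\Om)$, an inf-convolution
\[
\psi_\eps(x):=\inf_{y\in\bdry\Om}\bigl(f_2(y)+L|x-y|\bigr),
\]
with $L$ sufficiently large (depending on the modulus of continuity of $f_2$), produces $\psi_\eps\in\Lip(\bdry\Om)$ satisfying $\psi_\eps\le f_2\le\psi_\eps+\eps$ on $\bdry\Om$. In particular $\oHp\psi_\eps\le\oHp f_2$ by Definition~\ref{def Hp cont}.

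Next I would take any $\phi\in\Lip(\bdry\Om)$ with $\phi\le f_1$. Combining the hypothesis $f_1\le f_2$ q.e.\ on $\bdry\Om$ with $f_2\le\psi_\eps+\eps$ gives $\phi\le\psi_\eps+\eps$ q.e.\ on $\bdry\Om$. McShane-extending $\phi$ and $\psi_\eps$ to Lipschitz functions $\tilde\phi,\tilde\psi_\eps$ on $\R^n$, these are continuous (hence quasicontinuous), lie in $\Wpx(\Om)$ since $\Om$ is bounded, and coincide with $\phi$ and $\psi_\eps$ on $\bdry\Om$. The remark following Lemma~\ref{lem-comp-principle} (an application of Lemma~\ref{lem-Wpx0-qe-on-bdry}) then yields
\[
(\tilde\phi-\tilde\psi_\eps-\eps)_\limplus\in\Wpx_0(\Om),
\]
and Lemma~\ref{lem-comp-principle} applied to $\tilde\phi$ and $\tilde\psi_\eps+\eps$ gives $\oHp\tilde\phi\le\oHp(\tilde\psi_\eps+\eps)$. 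Since $\oHp$ on Lipschitz boundary data is independent of the chosen Lipschitz extension and satisfies $\oHp(g+c)=\oHp g+c$ for constants $c$, this reads $\oHp\phi\le\oHp\psi_\eps+\eps\le\oHp f_2+\eps$.

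Taking the supremum over all admissible $\phi$ in Definition~\ref{def Hp cont} gives $\oHp f_1\le\oHp f_2+\eps$, and letting $\eps\to0$ completes the proof. The only mildly subtle point is the existence of the one-sided Lipschitz approximation $\psi_\eps$ of $f_2$, which is handled by the inf-convolution on the compact set $\bdry\Om$; the remainder is a direct unpacking of definitions against the quasicontinuity and Sobolev machinery already assembled in Section~\ref{sect-prelim} and Section~\ref{sect-supersoln}.
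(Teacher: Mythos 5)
Your proof is correct. The paper itself supplies no argument here (it merely asserts that Lemma~\ref{lem-comp-principle} ``extends immediately'' to continuous data), and your route via a one-sided inf-convolution of $f_2$ together with McShane extensions is exactly the natural way to unpack that assertion; the essential subtlety, which you handle properly, is that q.e.\ dominance on $\bdy\Om$ gives only $\phi\le\psi_\eps+\eps$ q.e.\ rather than pointwise, so $\phi$ cannot be fed directly into the supremum defining $\oHp f_2$ and one must instead pass through Lemma~\ref{lem-Wpx0-qe-on-bdry} (via the remark after Lemma~\ref{lem-comp-principle}) and then apply Lemma~\ref{lem-comp-principle} to the Lipschitz extensions.
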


Let us next show that $\oHp f$ is indeed \px-harmonic even for 
$f\in C(\bdry\Om)$.

\begin{lem} \label{lem-Hp-def}
Let $f \in C(\bdy \Om)$.
Then 
$\oHp f$ is \px-harmonic in\/ $\Om$
and
\[
  \oHp f (x)= \inf_{\Lip(\bdy \Om) \ni \phi  \ge f} 
                                 \oHp \phi(x)
             = \lim_{j\to\infty} \oHp f_j (x),
                                \quad x \in\Om,
\]
for every sequence\/ $\{f_j\}_{j=1}^\infty$
of functions in\/ $\Lip(\bdry \Om)$ converging uniformly
to $f$.
\end{lem}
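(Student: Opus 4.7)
The plan is to approximate $f \in C(\bdy \Om)$ uniformly by Lipschitz functions and exploit two simple properties of $\oHp$ on $\Lip(\bdy \Om)$: the translation invariance $\oHp(\phi+c) = \oHp \phi + c$ for $c \in \R$ (immediate, since $u$ minimizes $\int_\Om|\grad v|^{p(x)}dx$ in $\K_{-\infty,\ft}$ iff $u+c$ minimizes it in $\K_{-\infty,\ft+c}$), and the Lipschitz comparison principle already recorded after Lemma~\ref{lem-comp-principle}, which applies via McShane extensions since the difference of two such extensions is itself Lipschitz and, when nonpositive on $\bdy\Om$, has a positive part in $\Wpx_0(\Om)$ by Lemma~\ref{lem-Wpx0-qe-on-bdry}.

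First, since $\bdy \Om$ is compact, $\Lip(\bdy \Om)$ is uniformly dense in $C(\bdy \Om)$, so I can pick $f_j \in \Lip(\bdy \Om)$ with $\eps_j := \|f_j - f\|_{C(\bdy \Om)} \to 0$. From $f_k - (\eps_j+\eps_k) \le f_j \le f_k + (\eps_j+\eps_k)$, together with comparison and translation, I get
\[
\|\oHp f_j - \oHp f_k\|_{L^\infty(\Om)} \le \eps_j + \eps_k,
\]
so $\{\oHp f_j\}$ is uniformly Cauchy on $\Om$ and converges uniformly to some continuous $u$. The same estimate shows the limit is independent of the approximating sequence, which gives the last formula in the statement.

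Second, I would verify that this uniform limit $u$ is \px-harmonic. Since $\{\oHp f_j\}$ is uniformly bounded in $L^\infty(\Om)$, Caccioppoli-type estimates in the variable exponent setting yield uniform bounds in $\Wpx(\Om')$ for every $\Om' \Subset \Om$. Reflexivity of $\Wpx(\Om')$ produces a subsequence converging weakly in $\Wpxloc(\Om)$; the weak limit must coincide with $u$ a.e.\ by the uniform convergence. Strong monotonicity of the \px-Laplace operator (tested with $\oHp f_j - u$ against a cutoff) forces $\grad \oHp f_j \to \grad u$ strongly in $\Lpx(\Om',\R^n)$, so I can pass to the limit in the weak formulation \eqref{eq-prop-super-sol} and conclude that $u$ is a continuous solution, i.e.\ \px-harmonic.

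Third, I would identify $u$ with the three expressions in the statement. Set $\phi_j^\pm := f_j \pm \eps_j$; these are Lipschitz with $\phi_j^- \le f \le \phi_j^+$. The comparison principle gives $\oHp \phi \le \oHp \phi_j^+$ for every Lipschitz $\phi \le f$, so
\[
\sup_{\Lip(\bdy\Om)\ni\phi\le f} \oHp \phi \;\le\; \lim_{j\to\infty} \oHp \phi_j^+ \;=\; u,
\]
while $\oHp \phi_j^- \le \sup_{\phi\le f}\oHp \phi$ yields the reverse inequality after letting $j\to\infty$. This identifies $\oHp f$ from Definition~\ref{def Hp cont} with $u$; the argument for the infimum formula is symmetric. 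The main obstacle is Step two: the assertion that a uniform limit of \px-harmonic functions is \px-harmonic is standard for constant $p$, but in the variable-exponent setting one has to be careful with the modular-versus-norm distinction and with passing to the limit in the nonlinear term $p(x)|\grad u|^{p(x)-2}\grad u$.
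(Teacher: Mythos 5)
Your overall structure matches the paper's: approximate $f$ uniformly by Lipschitz functions $f_j$, use the comparison principle and the invariance $\oHp(\phi+c)=\oHp\phi+c$ to show $\{\oHp f_j\}$ is uniformly Cauchy, and then sandwich the limit between $\sup_{\phi\le f}\oHp\phi$ and $\inf_{\phi\ge f}\oHp\phi$ via $\phi_j^\pm = f_j\pm\eps_j$. The identification step in your third paragraph is exactly the paper's argument.

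The one place you diverge is the step you yourself flag as the obstacle: showing that a locally uniform limit of \px-harmonic functions is \px-harmonic. The paper does not reprove this; it cites Corollary~5.3 in Harjulehto--H\"ast\"o--Koskenoja--Lukkari--Marola~\cite{HHKLM07}, which is precisely this uniform convergence result. Your direct route --- Caccioppoli bounds to get $\Wpxloc$ boundedness, weak compactness, then a Minty--Browder/strong monotonicity argument to upgrade to strong gradient convergence and pass to the limit in \eqref{eq-prop-super-sol} --- is a legitimate alternative and is the standard way the cited result is itself proved, but as written it is a sketch: the Caccioppoli estimate with a constant independent of $j$, the identification of the weak $\Wpx$-limit with $u$, and the monotonicity trick (testing with a suitable cutoff times $\oHp f_j - u$) each need to be spelled out in the variable exponent modular framework. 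So the content is correct, but you are re-deriving a known black box that the paper invokes; you should either cite it or fill in that step in full.
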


\begin{proof}
Let $f_j \in \Lip(\bdy \Om)$ be such that 
$\sup_{\bdy \Om} |f - f_{j}| < 1/j$, $j=1,2,\ldots$\,.
Then $\sup_{\bdry \Om} |f_{j'} - f_{j''}| \le 2/j$
whenever $j', j'' \ge j$, and the comparison principle implies that for
all $x\in \Om$,
$$
Hf_{j'} (x) - \frac{2}{j} \le Hf_{j''} (x) 
\le Hf_{j'} (x) + \frac{2}{j},
$$
i.e.\ the sequence
$\{Hf_j (x)\}_{j=1}^\infty$ is a Cauchy sequence.
Hence, the limit
$h(x):=\lim_{j \to \infty} Hf_j(x)$ 
exists, and is a \px-harmonic function in $\Om$,
by the uniform convergence result in Corollary~5.3 in
Harjulehto--H\"ast\"o--Koskenoja--Lukkari--Marola~\cite{HHKLM07}.
Using the comparison principle again, it follows that
\begin{align*}
h(x) &= \lim_{j \to \infty} H(f_j - 1/j)(x)
       \le \sup_{\Lip(\bdy \Om) \ni \phi \le f} H\phi(x) \\
       &\le \inf_{\Lip(\bdy \Om) \ni \phi \ge f} H\phi(x)
       \le \lim_{j \to \infty} H(f_j + 1/j)(x)
       = h(x).
       \qedhere
\end{align*}       
\end{proof}

\begin{deff}\label{def-reg-pt}
Let $x_0 \in \bdy \Om$.
Then $x_0$ is \emph{regular} if
\[ 
         \lim_{\Om \ni y \to x_0} \oHp f(y)=f(x_0)
         \quad \text{for all } f\in C(\bdry\Om). 
\] 
We also say that $x_0$ is \emph{irregular} if it is not regular.
\end{deff}

See Theorem~\ref{reg-thm-1} below for characterizations
of regular boundary points.

Next we establish the  Kellogg property (Theorem~\ref{thm-kellogg}), 
which says that q.e.\ boundary point is regular.
The proof  
is based on the following 
pasting lemma, which may be of independent interest.

\begin{lem}    \label{lem-qsmin}
Let $x\in \partial \Om $ and $B=B(x,r)$.
Let  $f \in \Lip(\bdy \Om)$
be such that $f=M$
on $B \cap \bdy \Om$, where $M:=\sup_{\bdy \Om} f$.
Let further
\[
      u= \begin{cases}
             \oHp f & \text{in\/ $\Om$}, \\
             M &   \text{in $B \setm \Om$}.
         \end{cases}
\]
Then $u$ is a quasicontinuous supersolution
in $B$. 
\end{lem}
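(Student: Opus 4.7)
My plan is to show that $u$ is a solution of an obstacle problem on $B$ whose obstacle $\psi$ equals $M$ on $B \setm \Om$ and $-\infty$ on $B \cap \Om$ (with $u$ itself as the boundary datum, and $\Om$ in Definition~\ref{deff-obst-K} replaced by $B$); since solutions of obstacle problems are supersolutions, this will give the supersolution property, and the quasicontinuous representative will be the lsc-regularization $u^*$ supplied by Theorem~\ref{thm-u*}. First I would verify that $u \in \Wpx(B)$. Pick a Lipschitz extension $\ft$ of $f$ to $\Rn$ with $\ft \le M$ (by truncation); then $\oHp f - \ft \in \Wpx_0(\Om)$, and by Corollary~11.2.5 in~\cite{DHHR} there is a quasicontinuous $w \in \Wpx(\Rn)$ with $w = \oHp f - \ft$ a.e.\ in $\Om$ and $w = 0$ q.e.\ in $\Rn \setm \Om$. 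Then $\ut := w + \ft$ is a quasicontinuous $\Wpx(\Rn)$-extension of $\oHp f$, equal to $\ft$ q.e.\ outside $\Om$. Writing $u = \ut + g$ on $B$, the correction $g$ vanishes on $B \cap \Om$ and equals $M - \ft$ on $B \setm \Om$; since $\ft = f = M$ on $B \cap \bdy\Om$ and $B$ is convex, every segment from $x \in B \setm \Om$ to $y \in B \cap \Om$ meets $B \cap \bdy\Om$ at some point $z$, and the Lipschitz bound $|M - \ft(x)| = |\ft(z) - \ft(x)| \le L_{\ft}|z - x| \le L_{\ft}|x-y|$ shows that $g$ is Lipschitz on all of $B$. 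Hence $g \in \Wpx(B)$ and $u \in \Wpx(B)$.

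Next I would prove the obstacle-problem claim. For any competitor $v$ with $v - u \in \Wpx_0(B)$ and $v \ge \psi$ a.e.\ in $B$, the squeezing Lemma~\ref{lem-police} applied to $0 \le (v-M)_\limplus \le (v-u)_\limplus$ gives $(v-M)_\limplus \in \Wpx_0(B)$, so $\min\{v,M\}$ is still admissible, bounded above by $M$, and has no larger $\px$-Dirichlet energy. We may therefore assume $v \le M$, whence $v = M = u$ on $B \setm \Om$. Define $V = \oHp f$ on $\Om \setm B$ and $V = v$ on $B \cap \Om$. Since $v - u$ vanishes on $B \setm \Om$ (both equal $M$), the zero extension of $(v-u)|_{B\cap\Om}$ to $\Rn$ coincides with the zero extension of $v - u \in \Wpx_0(B)$, which is quasicontinuous on $\Rn$ and vanishes q.e.\ on $\Rn \setm \Om$ by Proposition~\ref{prop-Wpx0}; Lemma~\ref{lem-Wpx0-qe-on-bdry} then yields $V - \oHp f \in \Wpx_0(\Om)$, so that $V - \ft \in \Wpx_0(\Om)$ and $V$ competes with $\oHp f$ for the Dirichlet minimization on $\Om$. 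After cancelling the common integrals on $\Om \setm B$, the minimality of $\oHp f$ yields $\int_{B\cap\Om}|\grad \oHp f|^{\p}\,dx \le \int_{B\cap\Om}|\grad v|^{\p}\,dx$; since $\grad u = \grad v = 0$ a.e.\ on $B \setm \Om$, this upgrades to $\int_B|\grad u|^{\p}\,dx \le \int_B|\grad v|^{\p}\,dx$. Thus $u$ solves the obstacle problem on $B$ and is therefore a supersolution in $B$, while Theorem~\ref{thm-u*} supplies the required quasicontinuous representative~$u^*$.

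The main difficulty is the Sobolev membership in Step~1: a naive extension of $\oHp f$ by the constant $M$ across $B \cap \bdy\Om$ has no obvious Sobolev regularity, since one would otherwise need a Lipschitz extension of $f$ that is identically $M$ on all of $B$, and such an extension seems awkward to construct in full generality without regularity assumptions on $\bdy\Om$. The geometric argument based on the convexity of $B$ and the vanishing of $M - \ft$ on $B \cap \bdy \Om$ cleanly circumvents this obstacle.
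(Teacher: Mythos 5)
Your Step~1 (establishing $u\in\Wpx(B)$ and its quasicontinuity) is correct and is essentially the same decomposition the paper uses, just spelled out in more detail (the paper asserts without comment that the Lipschitz extension of $f$ glued with $M$ on $B\setm\overline\Om$ is Lipschitz on $B$, which is the convexity argument you give). The gap is in Step~2, where you recast $u$ as a solution of an obstacle problem on $B$.

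The obstacle constraint $v\ge\psi$ in Definition~\ref{deff-obst-K} is only an \emph{a.e.}\ constraint, so it is blind to the set $B\cap\bdy\Om$ whenever that set is Lebesgue-null; but $B\cap\bdy\Om$ can perfectly well have \emph{positive $\px$-capacity}. Concretely, take $\pminus=\pplus>n$, $\Om=B(0,1)\setm\{0\}$ and $B=B(0,\tfrac12)$; then $B\setm\Om=\{0\}$ has measure zero while $\Cpx(\{0\})>0$. Your $\psi$ is then $-\infty$ a.e.\ in $B$, so $\K_{\psi,u}$ is the unconstrained class $\{v:v-u\in\Wpx_0(B)\}$, whose energy minimizer is the $\px$-harmonic extension $\oHp_B u$ of $u|_{\bdy B}$ to the whole of $B$; since $\{0\}$ is a non-removable singularity for $p>n$, this differs from $u$ for a generic admissible $f$, so $u$ is \emph{not} a solution of the obstacle problem you set up. The precise place the argument breaks is the assertion that the zero extension of $v-u\in\Wpx_0(B)$ ``vanishes q.e.\ on $\Rn\setm\Om$'': after truncating $v$ by $M$ you only know $v=M$ \emph{a.e.}\ on $B\setm\Om$, and Kilpel\"ainen's Lemma~\ref{lem-quasicont-ae-qe} upgrades a.e.\ to q.e.\ only on \emph{open} sets; it gives you nothing on $B\cap\bdy\Om$, and indeed one can take $v=u-w_0$ with $0\le w_0\in C_0^\infty(B)$, $w_0=0$ on $B\setm\overline\Om$, $w_0>0$ on a positive-capacity piece of $B\cap\bdy\Om$, which lies in $\K_{\psi,u}$ but for which $V-\oHp f\notin\Wpx_0(\Om)$.

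The paper avoids this pitfall by working directly with the supersolution test inequality. For $0\le\phi\in C_0^\infty(B)$ it sets $\phi'=\min\{\phi,M-u\}$; because $\phi\ge0$ and $u=M$ \emph{pointwise} on $B\setm\Om$, the truncated test function $\phi'$ vanishes \emph{identically} on $B\setm\Om$ (not merely a.e.), so Proposition~\ref{prop-Wpx0} yields $\phi'\in\Wpx_0(B\cap\Om)$; one then invokes that $u$ minimizes in $B\cap\Om$, and finishes by noting $\grad u=0$ a.e.\ where $\phi\ne\phi'$ (since $u=M$ there) and $|\grad(u+\phi')|\le|\grad(u+\phi)|$ since $u+\phi'=\min\{u+\phi,M\}$. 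The one-sided nature of the supersolution test (only $\phi\ge0$, hence only upward perturbations) is precisely what makes the truncation vanish pointwise on $B\setm\Om$, and is exactly the information your a.e.\ obstacle discards. A minor secondary point: your closing appeal to Theorem~\ref{thm-u*} would only produce quasicontinuity of the representative $u^*$, whereas the lemma asserts that $u$ itself is quasicontinuous; that is in fact already delivered by your Step~1 and the appeal to $u^*$ should be dropped.
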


\begin{proof}
Extend $f$ to a Lipschitz function on $\overline{\Om}$
and let $f=M$ on $B \setm \overline{\Om}$.
Then $f \in \Lip(B) \subset \Wpx(B)$.
Let 
\[
    v=\begin{cases} 
          u-f & \text{in } B \cup \Om, \\
          0 & \text{otherwise}.
      \end{cases}
\]
Then $v=0$ in $\R^n \setm \Om$ and $v=\oHp f - f \in \Wpx_0(\Om)$.
As $v$ is continuous in $\Om$, Proposition~\ref{prop-Wpx0}
shows that $v \in \Wpx(B)$ and that $v$ is quasicontinuous.
Thus $u \in \Wpx(B)$ and $u$ is quasicontinuous in $B$.
By the comparison principle (Lemma~\ref{lem-comp-principle-cont}),
$u \le M$ in $B$.

To show that $u$ is a supersolution in $B$,
let $\phi \in C_0^\infty(B)$ be nonnegative.
We shall prove the inequality 
\begin{equation*} 
  \int _{\phi \ne 0} |\nabla u|^{\p}\,dx 
  \le \int _{\phi \ne 0} |\nabla (u+\phi)|^{\p} \,dx.
\end{equation*}

Let $\phi':=\min \{ \phi ,M-u\} \in \Wpx_0(B)$, which
is quasicontinuous and nonnegative in $B$.
Then $\phi' =0$  in $B \setm \Om$
and hence $\phi' \in \Wpx_0(B\cap \Om )$,
by Proposition~\ref{prop-Wpx0}.
Since $u$ is \px-harmonic in $B\cap \Om $, we have that
\begin{align*}
  \int _{\phi' \ne 0} |\nabla u|^{\p}\,dx 
    \le \int _{\phi' \ne 0} |\nabla (u+\phi')|^{\p} \,dx.
\end{align*}
Note that $\phi'=0\ne\phi$ if and only if $u=M$, in which case
$\grad u=0$ a.e.
Thus
\begin{align*}
  \int _{\phi \ne 0} |\nabla u|^{\p}\,dx 
   &=  \int _{\phi' \ne 0} |\nabla u|^{\p}\,dx 
       \le  \int _{\phi' \ne 0} |\nabla (u+\phi')|^{\p} \,dx.
\end{align*}
As $u+\phi'=\min\{u+\phi,M\}$ we have 
$|\nabla (u+\phi')| \le |\nabla (u+\phi)|$.
Since $\phi\ne0$ whenever $\phi'\ne0$, this finishes the proof.
\end{proof}

\begin{proof}[Proof of Theorem~\ref{thm-kellogg}]
For each  $j=1,2,\ldots$\,, we can cover $\bdy \Om$ by a finite number of balls
$B_{j,k}= B(x_{j,k}, 1/j)$, $1 \le k \le N_j$.
Let $\phi_{j,k}$ be a Lipschitz function with support in $3B_{j,k}$
such that  $0 \le \phi_{j,k} \le 1$
and  $\phi_{j,k}=1$ on $2B_{j,k}$.
Let further $\phi_{j,k,q}=q \phi_{j,k}$ for $0 < q \in \Q$.
Consider the sets 
\[
I_{j,k,q} = \Bigl\{ x \in  \itoverline{B}_{j,k} \cap\bdy \Om : 
     \liminf _{\Om \ni y \to x} \oHp \phi_{j,k,q}(y) < \phi_{j,k,q} (x)=q\Bigr\}.
\]
Note that $I_{j,k,q}$ contains only irregular points.
Let further
\[
      u_{j,k,q}= \begin{cases}
             \oHp \phi_{j,k,q} & \text{in $\Om$}, \\
             q &   \text{in $2B_{j,k} \setm \Om$},
         \end{cases}
\]
which is a quasicontinuous supersolution in $2B_{j,k}$ by Lemma~\ref{lem-qsmin}.
As $u_{j,k,q}$ is continuous in $\Om$, we have $u^*_{j,k,q}=H\phi_{j,k,q}$
in $\Om$.
By Theorem~\ref{thm-u*}, $u^*_{j,k,q}=u_{j,k,q}$ q.e.\ in $2B_{j,k}$
and hence
\[
        q=u_{j,k,q}(x)=u^*_{j,k,q}(x) 
        = \liminf_{\Om \ni y \to x}u^*_{j,k,q}(y)
             =\liminf_{\Om \ni y \to x} \oHp\phi_{j,k,q}(y)
\]
for q.e.\ $x \in \itoverline{B}_{j,k} \cap \bdy \Om$.
Thus $C_{\px}(I_{j,k,q})=0$. 

Now consider a function $\phi \in C(\bdy \Om)$ and assume that 
we do not have 
\[
 \lim_{\Om \ni y \to x} \oHp \phi(y) = \phi (x)
\]
for some $x \in \bdy \Om$.
By considering $-\phi$ if necessary, and adding a constant,
we can assume that $ \phi \ge 0$ and that
$     \liminf_{\Om \ni y \to x} \oHp \phi(y) < \phi (x)$.

Since $\phi$ is continuous we can find a ball $B_{j,k}$ 
containing the point $x$ so that
\[
      M:= \inf_{3B_{j,k} \cap \bdy \Om} \phi 
    > \liminf_{\Om \ni y \to x} \oHp \phi(y) \ge 0.
\]
We can then also find a rational $q$ such that
$M >q> \liminf_{\Om \ni y \to x} \oHp \phi(y)$.

Thus, $\phi_{j,k,q} \le \phi$ on $\bdy \Om$, and hence, by 
the comparison principle (Lemma~\ref{lem-comp-principle-cont}),
\[
       \liminf_{\Om \ni y \to x} \oHp \phi_{j,k,q}(y)
       \le \liminf_{\Om \ni y \to x} \oHp \phi(y)
       < q = \phi_{j,k,q} (x),
\]
i.e.\ $x \in I_{j,k,q}$.
Thus
\begin{equation} \label{eq-Ip}
    I_p = \bigcup_{j=1}^\infty \bigcup_{k=1}^{N_j} 
          \bigcup_{\substack{q \in \Q \\ q>0}}     I_{j,k,q},
\end{equation}
is a countable union of sets
of zero \px-capacity, and hence itself of zero \px-capacity.
\end{proof}

\begin{remark}
It is easy to see that
\[
I_{j,k,q} = \bigcup_{l=1}^\infty 
      ( \itoverline{B}_{j,k}\cap \bdy \Om  \cap 
      \overline{\{y \in \Om: \oHp \phi_{j,k,q}(y) <q-1/l\}}),
\]
is a countable union of compact sets. 
Together with the identity \eqref{eq-Ip}
this shows that $I_{\px}$ is an $F_\sigma$ set.
\end{remark}

\section{Removable singularities}
\label{sect-remove}

In this section we are going to prove Theorem~\ref{thm-removability-qharm-intro}.
Let us first state it in a slightly more precise form.

\begin{thm} \label{thm-removability-qharm}
Let $F \subset \Om$ be relatively closed and such that $\Cpx(F)=0$.
Let $u$ be a bounded \px-harmonic function in\/ $\Om \setm F$.
Then $u$ has a unique \px-harmonic extension to\/ $\Om$ given by
\[
U(x)=  \essliminf_{\Om \setm F \ni y \to x} u(y), \quad x \in \Om.
\]
If moreover $u \in \Wpx(\Om \setm F)$, then
$U \in \Wpx(\Om)$ and\/ $\|U\|_{\Wpx(\Om)}=\|u\|_{\Wpx(\Om \setm F)}$.
\end{thm}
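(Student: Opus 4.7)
My plan is to construct $U$ as the unique bounded \px-harmonic extension by (i) verifying $U\in\Wpxloc(\Om)$, (ii) verifying the weak Euler--Lagrange equation for $U$ on $\Om$, and (iii) reading off the explicit formula and uniqueness from the lsc-regularization of \px-harmonic functions. The key tool throughout is a sequence of \emph{capacitary cutoffs}: since $\Cpx(F)=0$, there exist $w_j\in\Wpx(\R^n)$ with $0\le w_j\le 1$, $w_j\equiv 1$ on an open neighbourhood $V_j\supset F$, and $\|w_j\|_{\Wpx(\R^n)}<1/j$; passing to a subsequence I may assume $w_j\to 0$ a.e.\ on $\Om\setm F$. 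Since $u$ is continuous on $\Om\setm F$ and $F$ has zero measure, the essential liminf $U$ coincides with $u$ pointwise on $\Om\setm F$, so $|U|\le M:=\sup|u|$ throughout $\Om$.

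For the Sobolev regularity, fix $\eta\in C_0^\infty(\Om)$. Testing the weak formulation for $u$ on $\Om\setm F$ with a test function of the form $\eta^{\pplus}(u-c)(1-w_j)^{\pplus}$ produces a Caccioppoli-type estimate
\begin{equation*}
\int \eta^{\pplus}(1-w_j)^{\pplus}|\grad u|^{\p}\,dx
\le C(M,\pplus)\biggl(\int|\grad\eta|^{\p}\,dx+\int|\grad w_j|^{\p}\,dx\biggr),
\end{equation*}
whose right-hand side is uniformly bounded and whose second integral tends to $0$ by~\eqref{mod-norm-zero-conv}. Fatou's lemma then yields $\int\eta^{\pplus}|\grad u|^{\p}\,dx<\infty$, so the zero extension of $\grad u$ across $F$ lies in $\Lpx\loc(\Om)$. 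That this is genuinely the distributional gradient of $U$ on $\Om$ follows from the identity
\[
\int u\,\partial_i((1-w_j)\phi)\,dx = -\int (\partial_i u)(1-w_j)\phi\,dx,
\]
valid for $\phi\in C_0^\infty(\Om)$ since $(1-w_j)\phi$ has compact support in $\Om\setm F$, by letting $j\to\infty$: the cross term $-\int u\phi\,\partial_i w_j\,dx$ vanishes by the H\"older inequality~\eqref{Holder-ineq} since $u\phi$ is bounded with bounded support and $\|\grad w_j\|_{\Lpx}\to 0$.

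Harmonicity of $U$ is then established by the same limiting scheme applied to
\begin{equation*}
\int p(x)|\grad u|^{\p-2}\grad u\cdot\grad((1-w_j)\phi)\,dx = 0,
\end{equation*}
which is legitimate because $(1-w_j)\phi$ is an admissible test function in $\Om\setm F$. Expanding via the product rule and discarding the cross term by~\eqref{Holder-ineq} (the factor $p(x)|\grad u|^{\p-2}\grad u\cdot\phi$ lies in $L^{p'(\cdot)}(\Om)$ by the regularity just obtained, while $\|\grad w_j\|_{\Lpx}\to 0$) leaves $\int p(x)|\grad U|^{\p-2}\grad U\cdot\grad\phi\,dx = 0$. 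By Proposition~\ref{prop-super-sol} and Remark~\ref{rmk-soln-cont}, the lsc-regularization $U^*$ is continuous and \px-harmonic on $\Om$, and since $U^*=U=u$ pointwise on $\Om\setm F$ by continuity of $u$, this $U^*$ is the sought extension.

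Uniqueness is immediate: any two continuous \px-harmonic extensions agree with $u$ on the dense set $\Om\setm F$ and hence everywhere. The explicit formula $U(x)=\essliminf_{\Om\setm F\ni y\to x}u(y)$ holds because $U$ is simultaneously super- and subharmonic and therefore lsc-regularized by Theorem~\ref{thm-superh-lsc-reg}, while the essential liminf does not see the null set $F$. The final norm identity when $u\in\Wpx(\Om\setm F)$ is a one-line consequence of $U=u$ and $\grad U=\grad u$ a.e.\ on $\Om$. I expect the main obstacle to be step (i): the variable exponent forces careful juggling of the modular--norm correspondence~\eqref{mod-norm-zero-conv} and of the H\"older inequality~\eqref{Holder-ineq}, and one must also confirm that the test functions $(1-w_j)\phi$ genuinely belong to $\Wpx_0(\Om\setm F)$, which is precisely where Proposition~\ref{prop-Wpx0} and the squeezing Lemma~\ref{lem-police} enter.
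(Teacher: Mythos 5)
Your proposal is essentially correct and rests on the same core technique as the paper — capacitary cutoffs vanishing near $F$, a Caccioppoli estimate testing the weak formulation with products of these cutoffs, and a limiting argument using the modular--norm correspondence and the H\"older inequality. The difference is architectural. The paper does not prove Theorem~\ref{thm-removability-qharm} directly: it first establishes the separate removability Theorem~\ref{thm-removability-q} for \emph{superharmonic} functions bounded from below (via exactly the Caccioppoli scheme you use, but with a one-signed test function such as $-u\eta_j^{\pplus}$ since there $u$ is merely a supersolution), characterizes the bounded extension as an lsc-regularized supersolution through Theorem~\ref{thm-superh-char-bdd}, and only then deduces the harmonic case through Proposition~\ref{prop-rem-imp} by applying the superharmonic result to both $u$ and $-u$ and gluing. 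Your route argues directly on the harmonic $u$, which is cleaner for this one theorem; the trade-off is that the paper also wants Theorem~\ref{thm-removability-q} in its own right (it is used again in Section~\ref{sect-semi}), so for them the detour costs nothing. Two small points worth tightening in your write-up: you should localize the cutoffs as in the paper's Lemma~\ref{lem-seq} by taking $K=F\cap\supp\eta$ compact (your $w_j$ need only equal $1$ near this compact piece, which is what the capacity definition actually furnishes cheaply); and for the final norm identity you should observe explicitly that the claim $U\in\Wpx(\Om)$ with equal norm follows because you have identified the distributional gradient of $U$ on all of $\Om$ with the zero extension of $\grad u$, and $|F|=0$, so the modulars (and hence Luxemburg norms) coincide — a point the paper handles via a truncation and Cauchy-sequence argument since it must also cover unbounded superharmonic $u$, which you do not.
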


Note that the boundedness assumption cannot be omitted even in the
constant-$p$ case, as shown by the function $u(x)=-|x|^{(p-n)/(p-1)}$, which
is \pp-harmonic in $B(0,1)\setm\{0\}\subset \R^n$ but not in $B(0,1)$.
It also shows that
the assumption that $u$ be bounded from below cannot be dropped from 
Theorem~\ref{thm-removability-q} below either.

Theorem~\ref{thm-removability-qharm} follows directly from 
Proposition~\ref{prop-rem-imp} below and the following removability
result for bounded superharmonic functions.

\begin{thm} \label{thm-removability-q}
Let $F \subset \Om$ be relatively closed and such that $\Cpx(F)=0$.
Let $u$ be a  superharmonic function in\/ $\Om \setm F$ 
which is bounded from below.
Then $u$ has a unique superharmonic extension\/ $U$ to\/ $\Om$
given by 
\[
    U(x)=\essliminf_{\Om \setm F \ni y \to x} u(y),\quad x\in \Omega.
\]
If moreover $u \in \Wpx(\Om \setm F)$, then
$U \in \Wpx(\Om)$ and\/ $\|U\|_{\Wpx(\Om)}=\|u\|_{\Wpx(\Om \setm F)}$.
\end{thm}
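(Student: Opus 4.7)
The plan is to produce $U$ as an \emph{essliminf extension} of $u$ and verify superharmonicity via the truncation characterization in Theorem~\ref{thm-superh-supermin-char}. After subtracting a lower bound, assume $u \ge 0$, and set
\[
U(x) := \essliminf_{\Om\setm F \ni y\to x} u(y).
\]
Since $u$ is lsc-regularized on $\Om\setm F$ by Theorem~\ref{thm-superh-lsc-reg} and $F$ has Lebesgue measure zero, $U = u$ pointwise on $\Om\setm F$, and $U$ is lsc in $\Om$ because any essliminf is. Uniqueness is then immediate: any superharmonic extension $U'$ is lsc-regularized by Theorem~\ref{thm-superh-lsc-reg} and equals $u$ a.e.\ in $\Om$, so $U'(x) = \essliminf_{y\to x} U'(y) = U(x)$ for every $x\in\Om$.

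For existence it suffices, by Theorem~\ref{thm-superh-supermin-char}, to prove that $U_k := \min\{U,k\}$ is an lsc-regularized supersolution on $\Om$ for every positive integer $k$. Writing $u_k := \min\{u,k\}$, Lemma~\ref{lem-superh-min} and Theorem~\ref{thm-superh-char-bdd} show that $u_k$ is a bounded lsc-regularized supersolution on $\Om\setm F$; in particular $u_k\in\Wpxloc(\Om\setm F)$ and satisfies \eqref{eq-prop-super-sol} for all nonnegative $\phi\in C_0^\infty(\Om\setm F)$. Since $U_k = u_k$ a.e.\ and $\essliminf$ commutes with capping at the constant $k$, a short computation shows that $U_k$ is also lsc-regularized.

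The heart of the proof, and the main obstacle, is upgrading the supersolution inequality from test functions supported in $\Om\setm F$ to test functions supported in $\Om$. For an open $\Om'\Subset\Om$, the hypothesis $\Cpx(F)=0$ together with the density of smooth functions in $\Wpx(\Rn)$ guaranteed by the log-H\"older condition~\eqref{eq-def-log-Holder} yields cutoffs $\eta_\eps\in C_0^\infty(\Rn)$ with $0\le\eta_\eps\le 1$, $\eta_\eps\equiv 1$ on a neighborhood of $F\cap\itoverline{\Om}'$, and $\|\eta_\eps\|_{\Wpx(\Rn)}\to 0$ as $\eps\to 0$. For any nonnegative $\phi\in C_0^\infty(\Om')$ the function $\phi(1-\eta_\eps)$ is nonnegative with compact support in $\Om\setm F$, hence lies in $\Wpx_0(\Om\setm F)$. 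Testing \eqref{eq-prop-super-sol} for $u_k$ against it and expanding the gradient yields
\[
\int_\Om p(x)|\nabla u_k|^{p(x)-2}\nabla u_k\cdot\nabla\phi\,(1-\eta_\eps)\,dx
\ge \int_\Om p(x)|\nabla u_k|^{p(x)-2}\nabla u_k\cdot\phi\,\nabla\eta_\eps\,dx.
\]
As $\eps\to 0$ the left-hand side tends to $\int_\Om p(x)|\nabla u_k|^{p(x)-2}\nabla u_k\cdot\nabla\phi\,dx$ by dominated convergence, while the right-hand side vanishes via the variable-exponent H\"older inequality~\eqref{Holder-ineq}, since
\[
\Bigl|\int_\Om p(x)|\nabla u_k|^{p(x)-2}\nabla u_k\cdot\phi\,\nabla\eta_\eps\,dx\Bigr|
\le C\bigl\||\nabla u_k|^{p(\cdot)-1}\phi\bigr\|_{L^{p'(\cdot)}(\Om')}\|\nabla\eta_\eps\|_{\Lpx(\Om')}\to 0.
\]
The same cutoff device applied to the distributional identity $\int u_k\,\partial_i(\phi(1-\eta_\eps))\,dx = -\int(\partial_i u_k)\phi(1-\eta_\eps)\,dx$ simultaneously shows $U_k\in\Wpxloc(\Om)$ with weak gradient agreeing a.e.\ with $\nabla u_k$, so \eqref{eq-prop-super-sol} holds for $U_k$ on all of $\Om$.

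Finally, when $u\in\Wpx(\Om\setm F)$, running the cutoff argument on $u$ itself (without the truncation at $k$) gives $U\in\Wpx(\Om)$ with $\nabla U = \nabla u$ a.e.\ on $\Om\setm F$; since $F$ has Lebesgue measure zero, the two $\Wpx$-norms coincide. The main technical difficulty throughout will be the construction of the cutoffs $\eta_\eps$ and the variable-exponent H\"older bookkeeping in the limit $\eps\to 0$; once these are in place, the overall structure parallels its constant-exponent analogue in Heinonen--Kilpel\"ainen--Martio~\cite{HeKiMa}.
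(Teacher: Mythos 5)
The key missing ingredient in your proposal is a Caccioppoli-type energy estimate across $F$, and without it the limit $\eps\to0$ does not go through. From $u_k\in\Wpxloc(\Om\setm F)$ one only knows $\nabla u_k\in\Lpx(K)$ for $K\Subset\Om\setm F$; since $\supp\phi$ meets $F$, nothing a priori controls $\int_{\supp\phi}|\nabla u_k|^{p(x)}\,dx$. But both of your limit passages quietly use such control. On the left-hand side, dominated convergence requires the dominating function $p(x)|\nabla u_k|^{p(x)-1}|\nabla\phi|$ to be in $L^1(\supp\phi)$, which is precisely $\nabla u_k\in\Lpx(\supp\phi)$. On the right-hand side, the factor $\bigl\||\nabla u_k|^{p(\cdot)-1}\phi\bigr\|_{L^{p'(\cdot)}(\Om')}$ in your H\"older estimate is finite only under the same hypothesis; without it the bound reads $\infty\cdot o(1)$, which is not $o(1)$. (Restricting instead to $\supp\nabla\eta_\eps$ does not help, since that set creeps up to $F$ as $\eps\to0$ and its energy could blow up.) Your closing sentence about the distributional identity also runs into this: the term $\int(\partial_i u_k)\phi(1-\eta_\eps)\,dx$ needs $\partial_i u_k\in L^1(\supp\phi)$ before dominated convergence applies, so it cannot be used to \emph{deduce} $U_k\in\Wpxloc(\Om)$.

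The paper fills this gap by first proving the energy bound $\nabla u\in\Lpx\loc(\Om)$ directly: after normalizing so that $u\le0$ (bounded case), it tests the supersolution inequality for $u$ on $\Om\setm F$ against $-u\eta_j^{\pplus}$ with $\eta_j=\eta\phi_j$, where $\phi_j$ are the cutoffs of Lemma~\ref{lem-seq}, and uses Young's inequality with a small parameter $\eps$ to absorb the gradient term on the left (display \eqref{thm-removability-CI}), arriving at an estimate of the form
\[
\int_{\tfrac12B}|\nabla u|^{\p}\phi_j^{\pplus}\,dx \;\le\; C\int_\Om\bigl(|\nabla\phi_j|+|\nabla\eta|\bigr)^{\p}\,dx ,
\]
whose right-hand side stays bounded as $j\to\infty$ by Lemma~\ref{lem-seq}\,\ref{item-b}. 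Only then does dominated convergence give $\nabla u\in\Lpx\loc(\Om)$, after which the passages to the limit in both the distributional gradient identity and the supersolution inequality — the steps you carried out — are legitimate. Your overall architecture (truncate, reduce to the bounded case, identify $U$ with the essliminf extension, invoke Theorems~\ref{thm-superh-char-bdd} and~\ref{thm-superh-supermin-char}) matches the paper's, and your uniqueness argument is correct; what is missing is precisely this self-improving Caccioppoli step. One further small point: for the final $\Wpx$ assertion you suggest ``running the cutoff argument on $u$ itself,'' but $u$ may be unbounded, so the test function $-u\eta_j^{\pplus}$ is no longer admissible; the paper instead observes that $\{u_k\}_k$ is Cauchy in $\Wpx(\Om\setm F)$, hence $\{U_k\}_k$ is Cauchy in $\Wpx(\Om)$ with equal norms since $|F|=0$, and passes to the limit.
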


To prove Theorem~\ref{thm-removability-q} we need the following lemma.
It is similar to Lemma~3.1 in Lukkari~\cite{lukkari09}, 
but since one also needs that $0\le\phi_j\le1$, we provide the short proof
and clarify this point.

\begin{lem}\label{lem-seq}
 Let $K$ be a compact set. 
If $\Cpx(K)=0$, then there exists a sequence\/ 
$\{\phi_j\}_{j=1}^{\infty}$ of $C^{\infty}(\R^n)$ 
functions with the following properties\/{\rm:}
\begin{enumerate}
\item $0\leq \phi_j \leq 1$ in\/ $\R^n$ and 
$\phi_j \equiv 0$ in a neighbourhood of $K$\textup{;}
\item \label{item-b}
$\lim_{j \to \infty}\int_{\Om} |\nabla \phi_j|^{\p}\,dx = 0$\textup{;}
\item \label{item-c}
$\lim_{j\to \infty}\phi_j=1$ and\/ $\lim_{j\to \infty}\nabla \phi_j=0$ 
a.e.\ in\/ $\R^n$.
\end{enumerate}
\end{lem}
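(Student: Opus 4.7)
The idea is to build $\phi_j$ as $1$ minus a smooth cutoff that witnesses $\Cpx(K)=0$. First, from the definition of $\Cpx(K)=0$ I pick, for each $j$, a function $u_j \in \Wpx(\R^n)$ with $u_j \ge 1$ on some open neighbourhood $V_j \supset K$ and
\[
\int_{\R^n}(|u_j|^{\p} + |\nabla u_j|^{\p})\,dx < 4^{-j}.
\]
Replacing $u_j$ by $v_j := \min\{u_j,1\}$ yields $v_j \in \Wpx(\R^n)$ with $v_j \equiv 1$ on $V_j$, $0\le v_j \le 1$ everywhere, and the same modular bound (since $|v_j|\le|u_j|$ and $|\nabla v_j|\le|\nabla u_j|$ a.e.). Multiplying further by a fixed cutoff $\eta \in C_0^\infty(\R^n)$ which equals $1$ on some fixed bounded open neighbourhood of $K$ (and shrinking the $V_j$ to lie inside $\{\eta=1\}$), I may assume each $v_j$ has compact support contained in $\supp\eta$; the modular grows by at most a constant depending only on $\eta$.

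Second, I smooth by mollification: let $v_j^\eps = v_j * \rho_\eps \in C^\infty(\R^n)$ for a standard mollifier $\rho_\eps$. The log-Hölder hypothesis~\eqref{eq-def-log-Holder} guarantees $v_j^\eps \to v_j$ in $\Wpx(\R^n)$ as $\eps \to 0$ (Chapter~4 of~\cite{DHHR}), hence also in modular by~\eqref{mod-norm-zero-conv}. Also $0 \le v_j^\eps \le 1$ pointwise, and since $v_j \equiv 1$ on the open set $V_j$, one has $v_j^\eps \equiv 1$ on $\{x\in V_j : \mathrm{dist}(x,\R^n\setm V_j)>\eps\}$, which contains the compact set $K$ whenever $\eps < \mathrm{dist}(K,\R^n\setm V_j)$. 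I therefore choose $\eps_j$ small enough that $v_j^{\eps_j}\equiv 1$ on some open set $U_j \supset K$ and
\[
\int_{\R^n}(|v_j^{\eps_j}|^{\p} + |\nabla v_j^{\eps_j}|^{\p})\,dx < 2^{-j}.
\]

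Setting $\phi_j := 1 - v_j^{\eps_j}$, properties (a) and (b) are immediate: $\phi_j \in C^\infty(\R^n)$, $0\le\phi_j\le 1$, $\phi_j\equiv 0$ on $U_j\supset K$, and $\int_\Om|\nabla\phi_j|^{\p}\,dx \le 2^{-j}\to 0$. For (c), the summability $\sum_j \int_{\R^n}(|v_j^{\eps_j}|^{\p} + |\nabla v_j^{\eps_j}|^{\p})\,dx < \infty$, combined with the elementary Chebyshev-type estimate $|\{|w|>\de\}| \le \de^{-\pplus}\int |w|^{\p}\,dx$ for $\de\in(0,1)$ (applied to $w=v_j^{\eps_j}$ and componentwise to $w=\partial_i v_j^{\eps_j}$) and Borel--Cantelli, gives $v_j^{\eps_j}\to 0$ and $\nabla v_j^{\eps_j}\to 0$ a.e.\ in $\R^n$. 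Hence $\phi_j\to 1$ and $\nabla\phi_j\to 0$ a.e.

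The only genuinely nontrivial ingredient is the mollification convergence in $\Wpx(\R^n)$, which fails for arbitrary variable exponents; this is precisely the point where log-Hölder continuity is indispensable. Everything else is bookkeeping: truncation to $[0,1]$, a harmless cutoff to make the supports compact, and a standard extraction of pointwise convergence from summable modulars via Borel--Cantelli.
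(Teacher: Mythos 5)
Your proof is correct in substance but takes a genuinely different route from the paper, and one small step needs a fix. The paper simply invokes Lemma~10.1.9 of Diening--Harjulehto--H\"ast\"o--R\r u\v zi\v cka (which already produces smooth admissible functions), notes from its proof that one may further assume $0\le u\le 1$, sets $\phi_j = 1 - u_j$, and passes to a subsequence to get the a.e.\ convergence in (c). You instead re-derive the content of that lemma from scratch: truncate, cut off to get compact support, mollify, and control the modular --- and then you obtain (c) without subsequence extraction, by choosing summable modular bounds and applying a Chebyshev-plus-Borel--Cantelli argument. Your version is longer but more self-contained, and it makes explicit the role of the compact cutoff in localizing the mollification so that only local log-H\"older continuity (and no decay condition at infinity) is needed; this is a point the paper leaves implicit in the citation. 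The one genuine slip is the truncation: the capacity definition does not require $u_j\ge 0$, so $\min\{u_j,1\}$ need not be nonnegative. Replace it by $\max\{0,\min\{u_j,1\}\}$ (or first pass to $|u_j|$, which is still admissible and has no larger gradient a.e.); this still equals $1$ on $V_j$, keeps the modular bound, and everything downstream is unchanged. Otherwise the argument is sound: the truncation preserves membership in $\Wpx(\R^n)$, mollification of a bounded compactly supported $\Wpx$ function converges in $\Wpx$ under local log-H\"older and preserves the bounds $0\le v_j^\eps\le 1$, it equals $1$ on an inner neighbourhood of $K$ for $\eps$ small, and your Chebyshev estimate $|\{|w|>\de\}|\le\de^{-\pplus}\int|w|^{\p}\,dx$ for $\de\in(0,1)$ combined with summability of the modulars gives $v_j^{\eps_j}\to 0$ and $\grad v_j^{\eps_j}\to 0$ a.e.\ by Borel--Cantelli.
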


\begin{proof}
By Lemma~10.1.9 in 
Diening--Harjulehto--H\"ast\"o--R\r{u}\v{z}i\v{c}ka~\cite{DHHR},
the infimum in the definition of $\Cpx(K)$ can be taken over all
nonnegative $u\in C^\infty(\R^n)$ such that $u\ge1$ in a neighbourhood of $K$.
In fact, it follows from the proof (which implicitly uses the standard mollification through Theorem~9.1.6 in~\cite{DHHR}) that one can also assume that $0 \le u\le1$.
Thus, there are $u_j\in C^\infty(\R^n)$ such that $0\le u_j\le1$ in $\R^n$,
$u=1$ in a neighbourhood of $K$ and
\[
\int_{\R^n} (u_j^{p(x)} + |\grad u_j|^{p(x)})\,dx \to0,
\quad \text{as }j\to\infty.
\]
Letting $\phi_j=1-u_j$ and passing to a subsequence then finishes the proof.
\end{proof}

In what follows the Lebesgue measure of a set in $\R^n$ is denoted by $|\cdot|$.

\begin{proof}[Proof of Theorem~\ref{thm-removability-q}]
We first show the uniqueness. 
Let $V$ be any superharmonic extension 
of $u$. 
Since $V$ is lsc-regularized, by Theorem~\ref{thm-superh-lsc-reg},
and $|F|=0$, we see that
\[
    V(x)=\essliminf_{\Om \ni y \to x} V(y)
        =\essliminf_{\Om \setm F \ni y \to x} u(y)=U(x),
    \quad x\in \Omega,
\]
which shows the uniqueness.

Let us now turn to the existence.
Assume to begin with that $u$ is bounded.
By Theorem~\ref{thm-superh-char-bdd}, $u$ 
is an lsc-regularized supersolution in $\Om \setm F$.
It is straightforward that 
$U$ is bounded and lsc-regularized in $\Om$ and that $U=u$ in $\Omega\setm F$. 
We shall show that $U$ is a supersolution in $\Om$, 
and thus a bounded superharmonic extension of $u$, 
by Theorem~\ref{thm-superh-char-bdd} again, as required.

First, we show that $U \in W^{1, \px}\loc(\Om)$. 
Let $B\Subset \Om$ be a ball and $\eta\in C^{\infty}_{0}(B)$ 
be such that $0\leq \eta\leq 1$ and $\eta=1$ in $\tfrac12 B$.
Let $\{\phi_j\}_{j=1}^\infty$ be  
as in Lemma~\ref{lem-seq}, with $K=F \cap \supp \eta$, 
and consider $\eta_j=\eta\phi_j$. 
Since $u$ is bounded, we may assume that $u\leq 0$. 
Then $-u\eta_j^{\pplus}\in\Wpx_0(\Om \setminus F)$ is nonnegative and
compactly supported in $\Om \setminus F$. 
Thus we have
\[
\int_{\Om} \p|\nabla u|^{p(x)-2}\nabla u \cdot (-\eta_j^{\pplus}\nabla u
            -\pplus u\eta_j^{\pplus-1}\nabla \eta_j) \,dx\geq 0.
\]
Hence,
\begin{equation}  \label{eq-test-Om-F}
 \int_{\Om} \p|\nabla u|^{p(x)}\eta_j^{\pplus}\,dx\leq 
       \pplus\int_{\Om} \p|\nabla u|^{p(x)-1}|u| 
              \eta_j^{\pplus-1}  
         |\nabla \eta_j| \,dx.
\end{equation}
The last integrand can be estimated for every $0<\varepsilon<1$ and $x\in\Om$
using the Young inequality as
\begin{equation}  \label{eq-Young-at-x} 
\frac{|u|\,|\grad\eta_j|}{\eps} (\eps |\nabla u|^{p(x)-1} \eta_j^{\pplus-1}) 
   \le \frac{(|u|\,|\grad\eta_j|)^{p(x)}}{p(x)\,\eps^{p(x)}}
+ \frac{\eps^{p'(x)}}{p'(x)}  |\grad u|^{p(x)} \eta_j^{(\pplus-1)p'(x)}.
\end{equation}
Since $p'(x)\ge(\pplus)'=\pplus/(\pplus-1)$ and $1/p'(x) <1$, inserting this 
into~\eqref{eq-test-Om-F} yields
\begin{align}   \label{thm-removability-CI}
&\int_\Om p(x) |\nabla u|^{p(x)} \eta_j^{\pplus}\,dx \nonumber\\
  &\quad 
\le \frac{\pplus}{\eps^{\pplus}} \int_\Om |u|^{p(x)} \, |\grad\eta_j|^{p(x)}\,dx
+ \pplus \eps^{(\pplus)'} \int_{\Om} \p |\nabla u|^{p(x)} \eta_j^{\pplus} \,dx.
\end{align}
By choosing $\eps$ small enough we can include the last integral 
in the left-hand side. 
(Note that it is finite.)
As a consequence, we have for every $j=1,2,\ldots$,  
\begin{align} \label{eq-removability-est}
\int_{\tfrac12B} |\nabla u|^{p(x)} \phi_j^\pplus \,dx&\le \int_{\Om} |\nabla u|^{p(x)}{\eta_j}^{\pplus}\,dx 
\le C(\pplus) \int_{\Om} |u|^{p(x)} |\nabla \eta_j|^{p(x)} \,dx \nonumber\\
&\le C(\pplus,u) \int_{\Om} (|\nabla \phi_j| + |\nabla \eta|)^{p(x)} \,dx,
\end{align} 
since $u$ is bounded. 
By Lemma~\ref{lem-seq}\,\ref{item-b}, the last integral remains
bounded as $j\to\infty$.
Thus  we get from Lemma~\ref{lem-seq}\,\ref{item-c} and dominated convergence that
$\nabla u \in L^{\px}\bigl( \tfrac12 B \bigr)$.
Since $B\Subset\Om$ was arbitrary, $\nabla u \in L^{\px}\loc(\Om)$.

To conclude that $U \in W^{1, \px}\loc(\Om)$ it remains
to show that $\nabla u$ is the distributional gradient of $U$ in $\Om$.
To this end, let $\eta\in C^\infty_0(\Om)$ be arbitrary and let $\{\phi_j\}_{j=1}^\infty$ be 
as in Lemma~\ref{lem-seq} with $K=F \cap \supp \eta$. 
Then $\eta \phi_j\in C_{0}^{\infty}(\Om\setminus F)$.
Since $\grad u$ is the distributional gradient of $u$ in $\Om\setm F$,
we have
\[
0=\int_{\Om\setm F} (u\grad(\eta\phi_j) + \eta\phi_j\grad u)\,dx
= \int_\Om u\eta\grad\phi_j\,dx + \int_\Om \phi_j (U\grad\eta + \eta\grad u)\,dx.
\]
The first integral in the right-hand side tends to zero by Lemma~\ref{lem-seq}\,\ref{item-b}, (\ref{mod-norm-zero-conv}) and the H\"older inequality.
Since $0\le\phi_j\le1$ and $|U\grad\eta+\eta\grad u|\in L^1(\Om)$,
the last integral tends to 
\[
\int_\Om (U\grad\eta + \eta\grad u)\,dx
\]
by Lemma~\ref{lem-seq}\,\ref{item-c} and dominated convergence.
Thus, $\grad u$ is the distributional gradient of $U$ in $\Om$,
and $U \in W^{1, \px}\loc(\Om)$.

It remains to be proven that $U$ is a supersolution 
in the whole of $\Om$.
Let $0\le\eta\in C^{\infty}_{0}(\Om)$ be arbitrary.
As above,  $\eta \phi_j\in C_{0}^{\infty}(\Om\setminus F)$ 
is an admissible test function,
where $\{\phi_j\}_{j=1}^\infty$ again are given 
by Lemma~\ref{lem-seq} with $K=F \cap \supp \eta$. 
Since $u$ is a supersolution in $\Om\setminus F$, it holds that
\begin{equation}\label{thm-removability-eq}
\int_{\Om}\p|\nabla u|^{p(x)-2} (\nabla u \cdot \nabla \phi_j) \eta\, dx 
+ \int_{\Om}\p|\nabla u|^{p(x)-2} (\nabla u\cdot \nabla \eta) \phi_j\,dx
\ge 0.
\end{equation}
As $|\nabla u|^{\px-1}\in L\loc^{p'(\cdot)}(\Om)$, 
the H\"older inequality 
\eqref{Holder-ineq} implies that the first term in~\eqref{thm-removability-eq}
is majorized by
\begin{align*}
2 \pplus \max_{\Om}|\eta| \, \bigl\|\,|\grad u|^{\px-1}
     \bigr\|_{L^{p'(\cdot)}(\spt\eta)}
   \|\grad \phi_j\|_{L^{p(\cdot)}(\spt\eta)},
\end{align*}
which tends to zero as $j\to\infty$, by Lemma~\ref{lem-seq}\,\ref{item-b} 
together with~\eqref{mod-norm-zero-conv}.

As for the second term in~\eqref{thm-removability-eq},
the Young inequality shows that $ |\nabla u|^{\p-1} \in L^1(\supp \eta)$.
Hence the second term in~\eqref{thm-removability-eq} converges
by dominated convergence.
Letting $j\to \infty$ in \eqref{thm-removability-eq} 
then shows that
\begin{equation}
 \int_{\Om}\p|\nabla u|^{p(x)-2} \nabla u\cdot \nabla \eta\, dx \geq 0.
 \end{equation}
Thus $U$ is a supersolution in $\Om$.

Finally, consider the case when $u$ is unbounded.
By Lemma~\ref{lem-superh-min}, $u_k:=\min\{u,k\}$ is a bounded superharmonic 
function in $\Om \setm F$ which, by the above, has $U_k:=\min\{U,k\}$ as 
a bounded superharmonic extension to $\Om$.
By Theorem~\ref{thm-superh-supermin-char}, $U$ is superharmonic in $\Om$.

If moreover, $u \in \Wpx(\Om \setm F)$, then $\{u_k\}_{k=1}^\infty$
is a Cauchy sequence in $\Wpx(\Om \setm F)$.
By the above,  $\nabla u_k$ is the distributional gradient of $U_k$ in $\Om$.
Since $|F|=0$ it follows that 
$\|U_k\|_{\Wpx(\Om)}=\|u_k\|_{\Wpx(\Om \setm F)}$.
Hence $\{U_k\}_{k=1}^\infty$
is a Cauchy sequence in $\Wpx(\Om)$ with limit $U$,
and thus $\|U\|_{\Wpx(\Om)}=\|u\|_{\Wpx(\Om \setm F)}$.
\end{proof}

\begin{prop} \label{prop-rem-imp}
Assume that $F \subset \Om$ is relatively closed and\/ $|F|=0$.
Let $u$ be a bounded \px-harmonic function in\/ $\Om \setm F$,
which has a superharmonic extension $U$ and a
subharmonic extension $V$ to\/ $\Om$.
Then both $U$ and $V$ are unique and
$U=V$ is \px-harmonic in\/ $\Om$.
\end{prop}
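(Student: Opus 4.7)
The plan is to show that $U$ and $V$ must each be $\px$-harmonic in $\Om$, by exploiting the fact that they agree almost everywhere, and then to conclude $U=V$ from continuity. Uniqueness will follow directly from Theorem~\ref{thm-removability-q}.

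First I would note that $u$ is bounded and $\px$-harmonic in $\Om\setm F$, hence both super- and subharmonic and bounded from below there. Applying Theorem~\ref{thm-removability-q} to $u$ and to $-u$ gives the uniqueness of $U$ and of $V$ (since a subharmonic extension of $u$ corresponds to a superharmonic extension of $-u$), together with the representations
\[
U(x)=\essliminf_{\Om\setm F\ni y\to x} u(y)
\quad\text{and}\quad
V(x)=\esslimsup_{\Om\setm F\ni y\to x} u(y),\quad x\in\Om.
\]
In particular, both $U$ and $V$ are bounded in $\Om$.

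The crux of the argument is the following. Since $|F|=0$, we have $U=u=V$ a.e.\ in $\Om$, and therefore the distributional gradients satisfy $\grad U=\grad V$ a.e. By Theorem~\ref{thm-superh-char-bdd}, $U$ is a bounded lsc-regularized supersolution, and $-V$ is a bounded lsc-regularized supersolution, which means $V$ is a subsolution. Proposition~\ref{prop-super-sol} then yields, for every nonnegative $\phi\in C_0^\infty(\Om)$,
\[
\int_\Om p(x)|\grad U|^{p(x)-2}\grad U\cdot\grad\phi\,dx\ge 0
\quad\text{and}\quad
\int_\Om p(x)|\grad V|^{p(x)-2}\grad V\cdot\grad\phi\,dx\le 0.
\]
Because $\grad U=\grad V$ a.e., the two integrands coincide, so both integrals vanish. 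Replacing $\phi$ by $-\phi$ shows the identity holds for every $\phi\in C_0^\infty(\Om)$, so $U$ (and similarly $V$) is a solution in the sense of Definition~\ref{deff-soln}.

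To finish, Remark~\ref{rmk-soln-cont} applied to the bounded solution $U$ shows that $U^*$ is continuous; since $U$ is lsc-regularized we have $U=U^*$, so $U$ is continuous and hence $\px$-harmonic in $\Om$. The same reasoning applied to $-V$ makes $V$ continuous and $\px$-harmonic. Two continuous functions that agree almost everywhere on an open set agree everywhere, so $U=V$. The main subtlety is ensuring $U,V\in\Wpxloc(\Om)$ so that the gradient-matching argument is valid; this is precisely what the supersolution characterization of Theorem~\ref{thm-superh-char-bdd} supplies.
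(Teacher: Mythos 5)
Your argument for showing that $U=V$ is \px-harmonic is essentially the paper's: use Theorem~\ref{thm-superh-char-bdd} to see that $U$ is a bounded lsc-regularized supersolution and $V$ a bounded subsolution, observe that $U=V$ a.e.\ forces both to be solutions, and invoke Remark~\ref{rmk-soln-cont} together with lsc-regularization to upgrade to continuity. Spelling this out via $\grad U=\grad V$ a.e.\ and Proposition~\ref{prop-super-sol} is a perfectly valid (slightly more explicit) phrasing of the same step.

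However, your uniqueness argument has a genuine flaw. You appeal to Theorem~\ref{thm-removability-q} to get the uniqueness and the representations of $U$ and $V$, but that theorem requires $\Cpx(F)=0$, whereas Proposition~\ref{prop-rem-imp} assumes only $|F|=0$. This is not a cosmetic issue: the proposition is applied later (in the proof of Theorem~\ref{thm-irr-char-V}, implication \ref{V-alt-def-irr-super}$\imp$\ref{V-alt-def-irr}) precisely under the weaker hypothesis $|F|=0$, so silently strengthening the assumption would break that chain. The fix is simple and is what the paper actually does: $U$ is superharmonic, hence lsc-regularized by Theorem~\ref{thm-superh-lsc-reg}, so
\[
U(x)=\essliminf_{\Om\ni y\to x}U(y)=\essliminf_{\Om\setm F\ni y\to x}u(y),
\qquad x\in\Om,
\]
where the second equality uses only $|F|=0$. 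This determines $U$ (and, symmetrically, $V$) from $u$, giving uniqueness without any capacity assumption on $F$.
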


\begin{proof}
Since $U$ is lsc-regularized and $|F|=0$, we have that
\[
 U(x)=\essliminf_{\Om \in y \to x} U(y)=\essliminf_{\Om \setm F \ni y \to x} u(y),\quad x\in \Om,
\]
and thus $U$ is unique.
Moreover, $U$ is bounded, as $u$ is bounded. 
By Theorem~\ref{thm-superh-char-bdd},
 $U$ is an lsc-regularized supersolution and since
$U=V$ a.e., $U$ is also a subsolution. 
Thus, $U$ is a solution.
Since $U$ is lsc-regularized, it follows from Remark~\ref{rmk-soln-cont} that
$U$ is continuous in $\Om$, and thus \px-harmonic in $\Om$.
Similarly $V$ is continuous in $\Om$,
and as $U=V$ a.e.\ in $\Om$ it follows that $U=V$ everywhere in $\Om$.
\end{proof}

The following two lemmas will be needed in the next section to prove
the trichotomy (Theorem~\ref{thm-trichotomy}). 
We state them already here to avoid a digression later on.

\begin{lem} \label{lem-connected}
Assume that
$G \subset \R^n$ is open and connected. 
If $F \subset G$ is
relatively closed with $\Cpx(F)=0$, then
$G \setm F$ is connected.
\end{lem}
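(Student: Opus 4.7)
The plan is to argue by contradiction, using the removability result (Theorem~\ref{thm-removability-qharm}) as the main tool. Suppose $G\setm F$ is not connected, so that $G\setm F = V_1 \sqcup V_2$ with $V_1,V_2$ disjoint, nonempty and open (in $\R^n$). The natural candidate test function is $u:=\chi_{V_1}$ on $G\setm F$: since $V_1$ and $V_2$ are open, $u$ is locally constant, hence smooth with $\nabla u\equiv 0$, and so a bounded \px-harmonic function in $G\setm F$.

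Next I would localize. Since $G$ is open and connected it is path-connected, so I can pick a polygonal path in $G$ joining a point of $V_1$ to a point of $V_2$ and thicken it by a sufficiently small tube to obtain a bounded, open, connected set $G_0$ with $\overline{G_0}\subset G$ and with $G_0 \cap V_i\ne\emptyset$ for $i=1,2$. The set $F':=F\cap G_0$ is relatively closed in $G_0$ and still satisfies $\Cpx(F')=0$.

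Applying Theorem~\ref{thm-removability-qharm} on $G_0$ with the set $F'$, the function $u|_{G_0\setm F'}$ extends uniquely to a \px-harmonic function $U$ on $G_0$. Because $G_0$ is bounded and $u$ is bounded with $\nabla u\equiv 0$, we have $u\in\Wpx(G_0\setm F')$, so the moreover-part of the theorem gives $U\in\Wpx(G_0)$ together with the norm identity $\|U\|_{\Wpx(G_0)}=\|u\|_{\Wpx(G_0\setm F')}$. Since $|F'|=0$ and $U=u$ a.e.\ on $G_0\setm F'$, the $L^\px$-parts of these two norms automatically agree, and subtracting them from the identity forces $\|\nabla U\|_{L^\px(G_0)}=\|\nabla u\|_{L^\px(G_0\setm F')}=0$, i.e.\ $\nabla U=0$ a.e.\ in $G_0$.

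Finally, $U$ is continuous on $G_0$ (being \px-harmonic) with vanishing weak gradient on the connected open set $G_0$, so $U$ is constant on $G_0$. But by construction $U=u=1$ on the nonempty open set $V_1\cap G_0$ and $U=u=0$ on the nonempty open set $V_2\cap G_0$, which is the desired contradiction. The main delicate point is the localization step together with extracting $\nabla U=0$ a.e.\ from the norm identity of Theorem~\ref{thm-removability-qharm}; once that is in hand the rest is automatic.
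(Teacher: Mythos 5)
Your proof is correct, and it is self-contained and free of circularity (Theorem~\ref{thm-removability-qharm} and its ingredients in Sections~\ref{sect-superh}--\ref{sect-remove} do not rely on this lemma), but it takes a genuinely different route from the paper. The paper's proof is a two-line reduction: by Proposition~10.1.10 in~\cite{DHHR}, $\Cpx(F)=0$ implies $C_{\pminus}(F)=0$, and then a minor modification of the constant-exponent Lemma~2.46 in Heinonen--Kilpel\"ainen--Martio~\cite{HeKiMa} (applied with the constant exponent $\pminus$) gives the connectedness of $G\setm F$. So the paper outsources the topological work to classical constant-$p$ potential theory. You instead stay entirely inside the variable exponent framework: you feed the locally constant test function $\chi_{V_1}$ into the paper's own removability theorem (Theorem~\ref{thm-removability-qharm}) on a bounded connected localization $G_0\Subset G$, and read off $\nabla U\equiv 0$ from the norm identity (or, equivalently, directly from the fact established in the proof of Theorem~\ref{thm-removability-q} that $\nabla u$ is the distributional gradient of $U$ and $|F'|=0$), whence $U$ is continuous and a.e.\ constant on the connected set $G_0$, contradicting $U=1$ on $V_1\cap G_0$ and $U=0$ on $V_2\cap G_0$. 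Your argument is longer but has the virtue of being internal to the paper and not invoking the constant-$p$ literature; the paper's argument is shorter but delegates the key step to an external reference whose ``simple modification'' it does not spell out.
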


\begin{proof}
Proposition~10.1.10 in 
Diening--Harjulehto--H\"ast\"o--R\r u\v zi\v cka~\cite{DHHR}
gives us that $C_{\pminus}(F)=0$. 
A simple modification of Lemma~2.46 in 
Heinonen--Kilpel\"ainen--Martio~\cite{HeKiMa} 
implies that $G \setm F$ is connected.
\end{proof}

\begin{lem} \label{lem-Cp-zero}
Assume that
$G \subset \R^n$ is open and connected. 
If $F \subsetneq G$ is relatively closed,
then $\Cpx(F)=0$ if and only if $\Cpx(\bdy F \cap G)=0$.
\end{lem}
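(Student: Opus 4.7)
The plan is to dispatch one implication immediately and to handle the other via the connectedness provided by Lemma~\ref{lem-connected}. For the forward direction, relative closedness of $F$ in $G$ gives $\overline{F}\cap G = F$, hence $\bdy F\cap G\subset F$; monotonicity of $\Cpx$ then yields $\Cpx(\bdy F\cap G)\le\Cpx(F)=0$.

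For the converse, suppose $\Cpx(\bdy F\cap G)=0$. First I would observe that $\bdy F\cap G$ is relatively closed in $G$ (being the intersection of $G$ with the closed set $\bdy F$), so Lemma~\ref{lem-connected}, applied with this set in place of $F$, asserts that $G\setm(\bdy F\cap G)$ is connected. The key identity, which relies on the relative closedness of $F$ in $G$, is
\[
G\setm(\bdy F\cap G) = F^{\circ} \cup (G\setm F),
\]
where $F^{\circ}$ denotes the topological interior of $F$ in $\R^n$. Indeed, $\bdy F\cap G = (\overline{F}\setm F^{\circ})\cap G = F\setm F^{\circ}$, since $\overline{F}\cap G=F$ and $F^{\circ}\subset F\subset G$. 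The two sets on the right of the identity are open in $\R^n$ and clearly disjoint.

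A connected set that decomposes as a disjoint union of two open sets must have one summand empty, and since $F\subsetneq G$ forces $G\setm F\neq\emptyset$, I conclude $F^{\circ}=\emptyset$. Then $F\subset \bdy F$, and because $F\subset G$ this gives $F\subset\bdy F\cap G\subset F$, i.e.\ $F=\bdy F\cap G$, so $\Cpx(F)=0$.

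I do not anticipate a serious obstacle. The only step that requires a moment's thought is recognizing that Lemma~\ref{lem-connected} is the appropriate tool; once $G\setm(\bdy F\cap G)$ is written as the disjoint union of $F^{\circ}$ and $G\setm F$, the rest is elementary point-set topology together with the strict inclusion $F\subsetneq G$.
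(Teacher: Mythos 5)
Your proof is correct and follows essentially the same route as the paper: necessity by monotonicity, and sufficiency by applying Lemma~\ref{lem-connected} to $\bdy F\cap G$ to conclude that $G\setm(\bdy F\cap G)$ is connected, hence $F$ has empty interior and coincides with $\bdy F\cap G$. You merely spell out the disjoint-open decomposition $G\setm(\bdy F\cap G)=F^{\circ}\cup(G\setm F)$ that the paper leaves implicit.
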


\begin{proof}
The necessity follows immediately from $\Cpx(\bdy F \cap G)\leq \Cpx(F)=0$.

In order to show the converse implication, assume that $\Cpx(\bdy F\cap G)=0$. 
Then, by Lemma~\ref{lem-connected},
$G\setm (\bdy F\cap G)$ is connected and so ${\rm int}\, F=\emptyset$. 
Hence, $F= \bdy F \cap G$, and thus $\Cpx(F)=0$.
\end{proof}

In the setting of metric spaces Lemma~\ref{lem-Cp-zero} can be found as 
Lemma~4.5 in Bj\"orn--Bj\"orn~\cite{BBbook} for the constant $p$ case. 
Therein, the use of
Newtonian spaces simplifies the argument.

\section{Boundary regularity and trichotomy}
\label{sect-bdry-reg}

In this section we prove one of the main results of this paper, namely the 
trichotomy (Theorem~\ref{thm-trichotomy})
between regular, semiregular and strongly irregular boundary points.

Recall that an \emph{irregular} boundary point $x_0 \in \bdy \Om$ is
\emph{semiregular} if the limit
\begin{equation} \label{eq-semi-7}
     \lim_{\Om \ni y \to x_0} \oHp f(y)
        \quad \text{exists for all }f \in C(\bdy \Om);
\end{equation}
and \emph{strongly irregular} if 
for all $f \in C(\bdy \Om)$ there is
a sequence $\{y_j\}_{j=1}^\infty$ such that
\begin{equation} \label{eq-strong-7}
 \Om \ni y_j \to x_0 \text{ and } \oHp f(y_j) \to f(x_0),
 \quad \text{as } j \to \infty.
\end{equation}

\begin{proof}[Proof of Theorem~\ref{thm-trichotomy}]
\emph{Case} 1.
\emph{There is $r>0$ such that $\Cpx(B \cap \bdy \Om)=0$,
where $B=B(x_0,r)$.}

By Lemma~\ref{lem-Cp-zero}, $\Cpx(B\setm \Om)=0$ 
and thus $B \subset \overline{\Om}$.
Let $f \in C(\bdy \Om)$.
By Theorem~\ref{thm-removability-qharm}, the \px-harmonic function
 $\oHp f$ 
has a \px-harmonic extension $U$ to $\Om \cup B$.
Since $U$ is continuous we have
\[
     \lim_{\Om \ni y \to x_0} \oHp f(y) = U(x_0),
\] 
i.e.\ \eqref{eq-semi-7} holds and $x_0$ is either regular or semiregular.

\emph{Case} 2.
\emph{The capacity $\Cpx(B(x_0,r) \cap \bdy \Om)>0$ for all $r>0$.}
(Note that this is complementary to case~1.)

For every $j=1,2,\ldots$\,, we thus have
$\Cpx(B(x_0,1/j) \cap \bdy \Om)>0$,
and by the Kellogg property (Theorem~\ref{thm-kellogg})
there is a regular boundary point $x_j \in B(x_0,1/j) \cap \bdy \Om$.
(We do not require the $x_j$ to be distinct.)

As $x_j$ is regular, we can find $y_j \in B(x_j,1/j) \cap \Om$
so that $|\oHp f(y_j)-f(x_j)|<1/j$.
It follows directly that $y_j \to x_0$ and $\oHp f(y_j) \to f(x_0)$, 
as $j \to \infty$,
i.e.\ \eqref{eq-strong-7} holds, and thus $x_0$ is either
regular 
or strongly irregular.
\end{proof}

We finish this section by characterizing regular boundary points in several
ways.
Semiregular boundary points will be characterized in Section~\ref{sect-semi}.
In view of the trichotomy result this indirectly
characterizes the strongly irregular points as well.

\begin{thm} \label{reg-thm-1}
Let $x_0 \in \bdy \Om$
and $d(x):= d(x,x_0)$.
Then the following are equivalent\/\textup{:}
\begin{enumerate}
\item \label{reg-1} 
The point $x_0$ is a regular boundary point.
\item \label{perron-dist-1}
It is true that
\[
       \lim_{\Om \ni y \to x_0} \oHp (jd)(y) = 0
       \quad \text{for all }j=1,2,\ldots.
\]
\item \label{cont-x0-perron-1}
It is true that 
\[
        \lim_{\Om \ni y \to x_0}  \oHp f(y) = f(x_0)
\]
for all bounded $f \in\Wpx(\Om)$
such that $f(x_0):=\lim_{\Om \ni y \to x_0} f(y)$ exists.
\item \label{semicont-x0-perron-1}
It is true that 
\[
        \limsup_{\Om \ni y \to x_0}  \oHp f(y) \le \limsup_{\Om \ni y \to x_0} f(y)
\]
for all bounded $f \in\Wpx(\Om)$.
\end{enumerate}
\end{thm}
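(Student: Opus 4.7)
The plan is to establish the cyclic chain \ref{reg-1}$\imp$\ref{perron-dist-1}$\imp$\ref{semicont-x0-perron-1}$\imp$\ref{cont-x0-perron-1}$\imp$\ref{reg-1}. The first implication is essentially by inspection: the restriction $jd|_{\bdy\Om}$ lies in $C(\bdy\Om)$ and vanishes at $x_0$, so \ref{reg-1} immediately gives $\lim_{y\to x_0}\oHp(jd)(y)=0$. Here I tacitly use that for the Lipschitz function $jd$ the two definitions of $\oHp(jd)$ (from Definition~\ref{Hf-defn} and Definition~\ref{def Hp cont}) agree, as noted after Lemma~\ref{lem-comp-principle}.

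The main step is \ref{perron-dist-1}$\imp$\ref{semicont-x0-perron-1}. Fix a bounded $f\in\Wpx(\Om)$ with (essential) bound $L$, let $M=\limsup_{y\to x_0}f(y)$, and let $\eps>0$. Choose $r>0$ such that $f\le M+\eps$ a.e.\ on $B(x_0,r)\cap\Om$, and pick an integer $j$ with $j\ge (L-M-\eps)/r$. Consider the Lipschitz majorant
\[
g(y):=M+\eps+jd(y), \qquad y\in\Om.
\]
Then $f\le g$ a.e.\ in $\Om$: in $B(x_0,r)\cap\Om$ this comes from the choice of $r$, and in $\Om\setm B(x_0,r)$ from $jd(y)\ge jr\ge L-M-\eps$, giving $g\ge L\ge f$. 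Since $g\in\Lip(\overline{\Om})\subset\Wpx(\Om)$ and $(f-g)_\limplus=0\in\Wpx_0(\Om)$, the comparison principle (Lemma~\ref{lem-comp-principle}) yields
\[
\oHp f \le \oHp g = M+\eps+\oHp(jd)\quad \text{in }\Om,
\]
where I used the elementary identity $\oHp(c+h)=c+\oHp h$ for constants $c$. Taking $\limsup_{y\to x_0}$ on both sides and invoking \ref{perron-dist-1} gives $\limsup_{y\to x_0}\oHp f(y)\le M+\eps$, and letting $\eps\to0$ proves \ref{semicont-x0-perron-1}.

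For \ref{semicont-x0-perron-1}$\imp$\ref{cont-x0-perron-1}, observe that if $u$ solves the $\px$-Laplace equation, so does $-u$, whence $\oHp(-f)=-\oHp f$. Applying \ref{semicont-x0-perron-1} to both $f$ and $-f$ therefore gives
\[
\limsup_{y\to x_0}\oHp f(y)\le f(x_0) \quad\text{and}\quad
\liminf_{y\to x_0}\oHp f(y)=-\limsup_{y\to x_0}\oHp(-f)(y)\ge -(-f(x_0))=f(x_0),
\]
which squeezes $\lim_{y\to x_0}\oHp f(y)=f(x_0)$.

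Finally, \ref{cont-x0-perron-1}$\imp$\ref{reg-1}. For $\phi\in\Lip(\bdy\Om)$, let $\tilde\phi$ be a McShane extension to $\overline{\Om}$; then $\tilde\phi\in\Wpx(\Om)$, $\lim_{y\to x_0}\tilde\phi(y)=\phi(x_0)$, and $\oHp\tilde\phi=\oHp\phi$ by the remark after Lemma~\ref{lem-comp-principle}. Thus \ref{cont-x0-perron-1} gives regularity against Lipschitz data. For general $f\in C(\bdy\Om)$, approximate uniformly by $\phi_j\in\Lip(\bdy\Om)$ with $\sup_{\bdy\Om}|f-\phi_j|<1/j$; the comparison principle (Lemma~\ref{lem-comp-principle-cont}) yields $|\oHp f-\oHp\phi_j|\le 1/j$ in $\Om$, and a standard $3\eps$-argument combining this with $\oHp\phi_j(y)\to\phi_j(x_0)\to f(x_0)$ completes the proof. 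The principal obstacle in this plan is the barrier construction in \ref{perron-dist-1}$\imp$\ref{semicont-x0-perron-1}: one must simultaneously match the local upper bound for $f$ near $x_0$ and a global one on $\Om$, and verify that the resulting majorant is admissible for the comparison principle.
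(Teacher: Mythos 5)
Your proposal is correct and follows essentially the same route as the paper: the cyclic chain \ref{reg-1}$\imp$\ref{perron-dist-1}$\imp$\ref{semicont-x0-perron-1}$\imp$\ref{cont-x0-perron-1}$\imp$\ref{reg-1}, with the key step \ref{perron-dist-1}$\imp$\ref{semicont-x0-perron-1} handled by building a Lipschitz majorant of the form $\text{const}+jd$ and invoking the comparison principle (Lemma~\ref{lem-comp-principle}), and \ref{semicont-x0-perron-1}$\imp$\ref{cont-x0-perron-1} obtained by applying the estimate to $-f$. The remaining implications (distance functions as test data, McShane extension plus uniform Lipschitz approximation) also match the paper's argument, so only notational bookkeeping differs.
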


In the constant $p$ case it is enough if \ref{perron-dist-1} holds
for $j=1$, which is easily seen since $\oHp(jd)=j \oHp d$ in this case.
In the variable exponent case this latter fact is not true,
but it is not known whether it suffices that \ref{perron-dist-1} holds
for $j=1$ also in this case.
In fact, the situation is similar for \pp-parabolic equations in the sense
that if $u$ is a \pp-parabolic function and $a \in \R$, then $u+a$ is
\pp-parabolic, but $au$ is in general not \pp-parabolic.
In the \pp-parabolic case a similar characterization of boundary
regularity to the one above was obtained
by Bj\"orn--Bj\"orn--Gianazza--Parviainen~\cite{BBGP}.
Therein a characterization of boundary regularity in terms
of the existence of a family of barriers was also obtained.
It would be interesting to obtain a similar characterization
in our variable exponent elliptic case.
Whether one barrier could suffice for boundary regularity 
in the \pp-parabolic case or in the variable exponent elliptic case
is an open question.

\begin{proof}
\ref{reg-1} $\imp$ \ref{perron-dist-1} 
This follows directly from Definition~\ref{def-reg-pt} 
by taking $f=jd$ for $j=1,2,\ldots$\,.

\ref{perron-dist-1} $\imp$ \ref{semicont-x0-perron-1} 
Let $A >\limsup_{\Om \ni y \to x_0} f(y)$ be real and $M=\sup_{\bdry\Om} (f-A)_\limplus$.
Let further $r>0$ be such that $f(x) <A$ 
for $x \in B(x_0,r) \cap \bdy \Om$,
and let $j>M/r$ be an integer. 
Then $f \le A+Md/r < A+jd$ on $\bdy \Om$.
It follows from the comparison principle in Lemma~\ref{lem-comp-principle}
that
\[
     \limsup_{\Om \ni y \to x_0} \oHp f(y) 
      \le A+ \lim_{\Om \ni y \to x_0} \oHp (jd)(y) 
      = A.
\]
Letting $A \to \limsup_{\Om \ni y \to x_0} f(y)$ gives 
$\limsup_{\Om \ni y \to x_0} \oHp f(y) \le \limsup_{\Om \ni y \to x_0} f(y)$.

\ref{semicont-x0-perron-1} $\imp$ \ref{cont-x0-perron-1} 
Applying \ref{semicont-x0-perron-1} to $-f$ yields
\[
    \liminf_{\Om \ni y \to x_0} \oHp f(y) 
     = - \limsup_{\Om \ni y \to x_0} \oHp(-f)(y) 
     \ge - (-f(x_0)) = f(x_0).
\]
Together with \ref{semicont-x0-perron-1} this gives the 
desired conclusion.

\ref{cont-x0-perron-1} $\imp$ \ref{reg-1} 
Let $f \in C(\bdy \Om)$. By the comparison principle together 
with uniform approximation by Lipschitz functions we may 
as well assume that $f \in \Lip(\bdy \Om)$. 
We find an extension $\ft \in \Lip(\overline{\Om})$
such that $\ft=f$ on $\bdy \Om$ (e.g.\ a McShane extension).
Then by definition and \ref{cont-x0-perron-1},
\[
        \lim_{\Om \ni y \to x_0}  \oHp f(y) 
        =         \lim_{\Om \ni y \to x_0}  \oHp \ft(y) = f(x_0).
        \qedhere
\]
\end{proof}

\section{Characterizations of semiregular points}
\label{sect-semi}
Similarly to regular points, semiregular points can be characterized by 
a number of equivalent conditions.
This will be done in Theorem~\ref{thm-rem-irr-char}, but before that we
obtain the following characterizations of relatively open sets of semiregular
points.

\begin{thm} \label{thm-irr-char-V}
Let $V \subset \bdy \Om$ be relatively open.
Then the following are equivalent\/\textup{:}
\begin{enumerate}
\renewcommand{\theenumi}{\textup{(${\rm \alph{enumi}}'$)}}%
\item \label{V-semireg}
The set $V$ consists entirely of semiregular points.
\item \label{V-R}
The set $V$ does not contain any regular point.
\item \label{V-Cp-V-bdy}
It is true that $\Cpx(V)=0$.
\item \label{V-alt-def-irr}
The set\/ $\Om \cup V$ is open in\/ $\R^n$,
and every
bounded \px-harmonic function in\/ $\Om$ has a \px-harmonic extension
to\/ $\Om \cup V$.
\item \label{V-alt-def-irr-super}
The set\/ $\Om \cup V$ is open in\/ $\R^n$, $|V|=0$,
and every
bounded superharmonic function in\/ $\Om$ has a superharmonic extension
to\/ $\Om \cup V$.
\item \label{V-rem-motiv}
For every $f \in C(\bdy \Om)$, the 
\px-harmonic extension
$\oHp f$ depends only on $f|_{\bdy \Om \setm V}$
\textup{(}i.e.\ if $f,h \in C(\bdy \Om)$ and
$f=h$ on $\bdy \Om \setm V$, then 
$\oHp f \equiv \oHp h$\textup{)}.
\end{enumerate}
\end{thm}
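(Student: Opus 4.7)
The plan is to prove the equivalences cyclically as
(a$'$) $\Leftrightarrow$ (b$'$) $\Leftrightarrow$ (c$'$), then (c$'$) $\Rightarrow$ each of (d$'$), (e$'$), (f$'$), and finally each of these three back to (b$'$). The implication (a$'$) $\Rightarrow$ (b$'$) is immediate from the definition of semiregularity, and (b$'$) $\Rightarrow$ (c$'$) is the Kellogg property (Theorem~\ref{thm-kellogg}): under (b$'$), $V$ is contained in the set of irregular boundary points of $\Om$, which has zero $\px$-capacity.

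For (c$'$) $\Rightarrow$ the rest, I first check that $\tilde\Om:=\Om\cup V$ is open. Pick $x_0\in V$ and, by relative openness, find $r>0$ with $B(x_0,r)\cap\bdy\Om\subset V$, so that $\Cpx(B(x_0,r)\cap\bdy\Om)=0$. Applying Lemma~\ref{lem-Cp-zero} inside the ball $G=B(x_0,r)$ to $F=G\setm\Om$ (whose topological boundary in $G$ equals $G\cap\bdy\Om$) gives $\Cpx(B(x_0,r)\setm\Om)=0$, hence $|B(x_0,r)\setm\Om|=0$, so $B(x_0,r)\subset\overline\Om$ and therefore $B(x_0,r)\subset\Om\cup V$. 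Since $V$ is relatively closed in $\tilde\Om$ with $\Cpx(V)=0$, Theorem~\ref{thm-removability-qharm} gives (d$'$) and Theorem~\ref{thm-removability-q} (with $|V|=0$) gives (e$'$). For (f$'$), $f=h$ on $\bdy\Om\setm V$ together with $\Cpx(V)=0$ forces $f=h$ quasi-everywhere on $\bdy\Om$, so Lemma~\ref{lem-comp-principle-cont} applied in both directions yields $\oHp f=\oHp h$.

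To close the cycle I would argue (f$'$) $\Rightarrow$ (b$'$), (e$'$) $\Rightarrow$ (d$'$), and (d$'$) $\Rightarrow$ (b$'$). For the first, suppose $x_0\in V$ were regular; relative openness yields $\delta>0$ with $B(x_0,\delta)\cap\bdy\Om\subset V$, and then the Lipschitz function $\phi(x):=\max\{0,\,1-|x-x_0|/\delta\}$ restricted to $\bdy\Om$ lies in $C(\bdy\Om)$ with $\phi(x_0)=1$ and $\phi\equiv 0$ on $\bdy\Om\setm V$, so (f$'$) forces $\oHp\phi\equiv\oHp 0\equiv 0$, contradicting $\lim_{\Om\ni y\to x_0}\oHp\phi(y)=\phi(x_0)=1$. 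For (e$'$) $\Rightarrow$ (d$'$), any bounded $\px$-harmonic function $u$ on $\Om$ is both super- and subharmonic, so (e$'$) applied to $u$ and to $-u$ produces both a super- and a subharmonic extension to $\tilde\Om$, and Proposition~\ref{prop-rem-imp} (using $|V|=0$) then promotes these to the required $\px$-harmonic extension.

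The main obstacle is (d$'$) $\Rightarrow$ (b$'$), where one must pass from a merely continuous extension back to a Sobolev statement so that the uniqueness of the obstacle problem can be invoked. Using the same cutoff $\phi$ as above, $\oHp\phi\in\Wpx(\Om)$ is bounded and $\px$-harmonic, so (d$'$) together with the norm-preservation part of Theorem~\ref{thm-removability-qharm} yields an extension $U\in\Wpx(\tilde\Om)$, and regularity of $x_0$ forces $U(x_0)=\phi(x_0)=1$. Since $\Om$ is dense in $\tilde\Om$ and $U$ is continuous there, while by Kellogg for q.e.\ $y\in\bdy\tilde\Om=\bdy\Om\setm V$ we have $\oHp\phi(z)\to\phi(y)=0$ as $\Om\ni z\to y$, the zero-extension of $U$ to $\R^n$ is continuous outside a capacity-zero subset of $\bdy\tilde\Om$ and hence quasicontinuous on $\R^n$. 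Lemma~\ref{lem-Wpx0-qe-on-bdry} then places $U\in\Wpx_0(\tilde\Om)$, so $U$ solves the $\K_{-\infty,0}$-obstacle problem on $\tilde\Om$, and the uniqueness part of Theorem~\ref{thm-obst-prob-bdd} forces $U\equiv 0$, contradicting $U(x_0)=1$.
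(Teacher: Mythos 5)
Your decomposition of the equivalences is mostly a reshuffling of the paper's (same use of the Kellogg property for \ref{V-R}~$\Rightarrow$~\ref{V-Cp-V-bdy}, same removability theorems for \ref{V-Cp-V-bdy}~$\Rightarrow$~\ref{V-alt-def-irr-super}~$\Rightarrow$~\ref{V-alt-def-irr}, same cut-off function to go from \ref{V-rem-motiv} back to \ref{V-R}), and several pieces are fine. There are, however, two problems, one structural and one substantive.

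The structural one: your implication graph never points back into \ref{V-semireg}. You announce (a$'$)~$\Leftrightarrow$~(b$'$) in the plan but only prove (a$'$)~$\Rightarrow$~(b$'$); and (b$'$)~$\Rightarrow$~(a$'$) is certainly not ``immediate'' --- (b$'$) only excludes regular points, and one must still rule out strongly irregular ones, i.e.\ show that $\lim_{\Om\ni y\to x_0}\oHp f(y)$ exists for every $f\in C(\bdy\Om)$. This is exactly what the extension in (d$'$) provides, so the omission is easy to repair (argue (d$'$)~$\Rightarrow$~(a$'$), not merely (d$'$)~$\Rightarrow$~(b$'$)), but as written the six statements are not shown equivalent.

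The substantive one is in your (d$'$)~$\Rightarrow$~(b$'$) step. You invoke ``the norm-preservation part of Theorem~\ref{thm-removability-qharm}'' to upgrade the continuous extension $U$ of $\oHp\phi$ to a function in $\Wpx(\Om\cup V)$. But that theorem has $\Cpx(F)=0$ as a hypothesis, and under (d$'$) alone you do not know $\Cpx(V)=0$ --- that is precisely what you are still trying to deduce. This is not a technicality: the paper explicitly flags that under (d$'$) one only knows $U\in\Wpx\loc(\Om\cup V)$ and $U\in\Wpx(\Om)$, and that it could a priori be the case that $|V\setm\Om|>0$, so one cannot even conclude $U\in\Wpx(\Om\cup V)$, let alone $U\in\Wpx_0(\Om\cup V)$. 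Without $U\in\Wpx(\Om\cup V)$, Lemma~\ref{lem-Wpx0-qe-on-bdry} is not applicable and the obstacle-problem uniqueness cannot be used. The paper's actual argument circumvents this exactly by \emph{not} trying to put $U$ in a global Sobolev space: it exhausts $\Om\cup V$ by $\Om_j\Subset\Om\cup V$, covers $\bdy\Om_j$ by a small-capacity set and small-value balls, compares $U$ with $\oHp_{\Om_j}\phi$ for a capacitary test function $\phi$, and uses the Poincar\'e inequality to let the $\Lpx$-norm tend to zero. That detour is the real content of this implication and your shortcut does not replace it.

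A small remark on your openness argument for (c$'$)~$\Rightarrow$~(d$'$),\,(e$'$): you route it through Lemma~\ref{lem-Cp-zero} applied to $F=B(x_0,r)\setm\Om$, whereas the paper uses Lemma~\ref{lem-connected} on a connected neighbourhood. Both are correct and use the same underlying fact that sets of zero $\px$-capacity cannot disconnect; the difference is cosmetic.
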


Together with the implication \ref{semireg} $\imp$ \ref{Cp-V-bdy}
in Theorem~\ref{thm-rem-irr-char} this theorem shows that
the set $S$ of all semiregular boundary points 
can be characterized as 
the largest relatively open subset of $\bdy \Om$ having any 
of the properties above. 
Equivalently, it can be written e.g.\ as 
\begin{equation}    \label{eq-char-S-rel-open}
    S= \bigcup \{ V \subset \bdy \Om :
            \Cpx(V)=0 \text{ and $V$ is relatively open}\}.
\end{equation}

By \ref{V-alt-def-irr} we also see that $S$ is
contained in the interior of $\overline{\Om}$, 
i.e.\ $S \subset \bdy\Om \setm \bdy \overline{\Om}$.
Note however that it can happen that
$S \ne \bdy\Om \setm \bdy \overline{\Om}$,
as the following examples show.

\begin{example}
Let $\Cpx(\{x\})>0$ and $G \ni x$, where $G$ is a bounded open set,
and let $\Om :=G \setm \{x\}$.
Then $x$ is regular with respect to $\Om$, by
the Kellogg property,
but $x \in \bdy\Om \setm \bdy \overline{\Om}$,
cf.\ Example~\ref{ex-punctured-ball}.
\end{example}

\begin{example}
Let $n=2$ and $p \equiv 2$.
Let further
$\Om$ 
be the slit disc $B((0,0),1) \setm ((-1,0]\times \{0\})$.
It is well known  that $\Om$ is regular, and hence $S=\emptyset$.
However, $\bdy \Om \setm \bdy \overline{\Om} = (-1,0]\times\{0\}$.
\end{example}

The strong minimum principle says that 
\emph{if\/ $\Om$ is connected, 
$u$ is superharmonic in\/ $\Om$ and $u$ attains
its minimum in\/ $\Om$, then $u$ is constant in\/ $\Om$.}
The proof of the implication \ref{V-alt-def-irr} $\imp$ \ref{V-semireg}
is considerably easier 
 when the strong minimum principle is available, 
but it is not known if it holds 
in our generality. 
The strong minimum principle for the variable exponent case was
obtained by 
Fan--Zhao--Zhang~\cite{FZZ} under the assumption
that $p\in C^1(\overline{\Om})$. 
Theorem~5.3 in Harjulehto--H\"ast\"o--Latvala--Toivanen~\cite{HHLT} shows that
the strong minimum principle holds also under the weaker assumption that $p$ 
satisfies a  Dini-type condition, see (5.1) in~\cite{HHLT}.

\begin{proof}   [Proof of Theorem~\ref{thm-irr-char-V}]
\ref{V-semireg} $\imp$ \ref{V-R}
This is trivial.

\ref{V-R} $\imp$ \ref{V-Cp-V-bdy}
This follows directly from the Kellogg property (Theorem~\ref{thm-kellogg}).

\ref{V-Cp-V-bdy} $\imp$ \ref{V-alt-def-irr-super}
Let $x \in V$ and let $G$ be a connected neighbourhood of $x$, such that
that $G \cap \bdy \Om \subset V$.
By Lemma~\ref{lem-connected} sets of zero \px-capacity cannot separate space,
and hence $G \setm \bdry\Om$ must be connected.
Since $G\setm\bdry\Om=(G\cap\Om)\cup(G\setm\overline{\Om})$ and 
$G\cap\Om\ne\emptyset$, we get that $G\subset\overline{\Om}$.
As $G\cap\bdry\Om\subset V$, this implies that $G\subset\Om\cup V$. 
Since $x\in V$ was arbitrary, we conclude that $\Om\cup V$ is open. 
That $|V|=0$ follows directly from the fact that $\Cpx(V)=0$.
The extension is now provided by Theorem~\ref{thm-removability-q}.

\ref{V-alt-def-irr-super} $\imp$ \ref{V-alt-def-irr}
Let $u$ be a bounded \px-harmonic function on $\Om$.
By assumption, $u$ has a superharmonic extension $U$ to $\Om \cup V$.
Also $-u$ has a superharmonic extension $W$ to $\Om \cup V$.
Thus $-W$ is a subharmonic extension of $u$ to $\Om \cup V$.
By Proposition~\ref{prop-rem-imp},
$U=-W$ is \px-harmonic.

\ref{V-Cp-V-bdy} $\imp$ \ref{V-rem-motiv}
Let $f,h\in C(\bdry\Om)$ with $f=h$ on $\bdry\Om\setm V$.
Then $\eta:=h-f\in C(\bdry\Om)$ and $\eta=0$ on $\bdry\Om\setm V$.

Let $f_j\in\Lip_c(\R^n)\subset\Wpx(\R^n)$ converge uniformly to $f$
on $\bdry\Om$. (Here $\Lipc(\R^n)$ consists of Lipschitz
functions on $\R^n$ with compact support.)
Let also $\eta'_j\in\Lip_c(\R^n)$ be such that
$|\eta'_j-\eta|<1/j$ on $\bdry\Om$.
Letting $\eta_j=(\eta'_j-1/j)_\limplus-(\eta'_j+1/j)_\limminus$ we see that
$\eta_j=0$ on $\bdry\Om\setm V$ and $\eta_j\to\eta$ uniformly on $\bdry\Om$.

Since $\Cpx(V)=0$, Lemma~\ref{lem-Wpx0-qe-on-bdry} shows that 
$\eta_j\in\Wpx_0(\Om)$ and hence $\oHp f_j = \oHp(f_j+\eta_j)$.
Since $f_j+\eta_j\to f+\eta=h$ and $f_j\to f$ uniformly on $\bdry\Om$,
Lemma~\ref{lem-Hp-def} implies
$\oHp(f_j+\eta_j)\to \oHp h$ and $\oHp f_j\to \oHp f$ in $\Om$,
i.e.\ $\oHp h =\oHp f$.

\ref{V-rem-motiv} $\imp$ \ref{V-R}
Let $x_0\in V$.
As $V$ is relatively open in $\bdry\Om$, there exists $r>0$ such that
$B(x_0,r)\cap\bdry\Om \subset V$.
Define $f(x)=(1-d(x,x_0)/r)_\limplus$.
Then $f\in\Lip_c(\R^n)$ and $f=0$ on $\bdry\Om\setm V$.
Using~\ref{V-rem-motiv} we conclude that $\oHp f = \oHp 0 = 0$ in $\Om$.
Since $f(x_0)=1$, this shows that $x_0$ is not regular.

\ref{V-alt-def-irr} $\imp$ \ref{V-semireg}
Let $x_0 \in V$ and $f\in C(\bdy \Om)$.
Since $\oHp f$ has a \px-harmonic extension $u$ to $\Om \cup V$,
it follows that
\begin{equation}  \label{eq-lim-ex}
       \lim_{\Om \ni y \to x_0} \oHp f(y)=
       \lim_{\Om \ni y \to x_0} u(y)=u(x_0),
\end{equation}
and thus the limit in the left-hand side always exists.
It remains to show that $x_0$ is irregular.

As $V$ is relatively open in $\bdry\Om$, there exists $r>0$ such that
$B(x_0,r)\cap\bdry\Om \subset V$.
Define $h(x)=(1-d(x,x_0)/r)_\limplus$.
By assumption,  $\oHp  h$ has a \px-harmonic extension $U$ to $\Om\cup V$.
We shall show that $U\equiv0$ in $\Om\cup V$, as then
\[
      \lim_{\Om  \ni y \to x_0}\oHp h(y) = 
    \lim_{\Om  \ni y \to x_0} U(y) = 0 \ne 1=  h(x_0),
\]
i.e.\  $x_0$ is irregular.

As $0 \le h \le 1$,
we see that $0 \le \oHp h \le 1$ and also $0 \le U \le 1$.
By the  Kellogg property (Theorem~\ref{thm-kellogg}), 
\[
\lim_{\Om\ni y\to x} U(y) = \lim_{\Om\ni y\to x} \oHp h(y) = h(x)=0 
\]
q.e.\ in $\bdry(\Om\cup V)=\bdry\Om\setm V$, i.e.\ 
for all $x\in\bdry(\Om\cup V)\setm E$, where $\Cpx(E)=0$.
Moreover, \eqref{eq-lim-ex} applied to $y_j\in V$ instead of $x_0$ implies that
\[
0\le \limsup_{j\to\infty} U(y_j) \le \limsup_{\Om\ni y\to x} U(y) = 0,
\]
whenever $y_j\in V$ converge to some $x\in\bdry(\Om\cup V)\setm E$.
Hence,
\[
\lim_{\Om\cup V\ni y\to x} U(y) = h(x)=0 
\]
for $x\in\bdry(\Om\cup V)\setm E$.
Note that we cannot use the comparison principle 
(Lemma~\ref{lem-comp-principle}) 
directly to prove that $U\equiv0$ in $\Om\cup V$, since we do not know
that $U\in\Wpx_0(\Om \cup V)$.
(We know that $U\in\Wpx\loc(\Om \cup V)$ and that $U \in \Wpx(\Om)$,
but since it could a priori happen that $|V \setm \Om|> 0$,
we cannot, at this point, even deduce that $U \in \Wpx(\Om \cup V)$.)

Let $0 < \eps< 1$ and find an open set $G\supset E$ such that $\Cpx(G)<\eps$.
Let also $\phi\in\Wpx(\R^n)$ be such that $0\le\phi\le 1$, $\phi\equiv 1$ 
on $G$ and 
\begin{equation}  \label{eq-cap-zero-small-modular}
\int_{\R^n}(\phi^{p(x)} + |\grad\phi|^{p(x)})\,dx \le\eps.
\end{equation}
For every $x\in\bdry(\Om\cup V)\setm E$ there exists a ball $B_x\ni x$ such that
$0\le U<\eps$ in $2B_x\cap(\Om\cup V)$.
Exhaust $\Om\cup V$ by open sets
\[
\Om_1\subset\Om_2\subset\ldots \Subset\bigcup_{j=1}^\infty\Om_j = \Om\cup V.
\]
Then
\[
\overline{\Om\cup V} 
\subset \bigcup_{j=1}^\infty\Om_j \cup G\cup \bigcup_{x\in\bdry(\Om\cup V)\setm E} B_x.
\]
By compactness, there exists $j>1/\eps$ such that 
\[
\bdry\Om_j\subset G\cup \bigcup_{x\in\bdry(\Om\cup V)\setm E} B_x.
\]
Then $0\le U\le\eps$ on $\bdry\Om_j\setm G$ and
as $\phi\ge\chi_G$, we obtain $U\le\eps+\phi$ on $\bdry\Om_j$.
Since $U\in\Wpx(\Om_j)$, it is its own $\px$-harmonic extension in $\Om_j$,
i.e.\ $U=\oHp_{\Om_j}U$.
If we let $v=\oHp_{\Om_j}\phi$, then $U\le\eps+v$ in $\Om_j$,
by the comparison principle (Lemma~\ref{lem-comp-principle}).
The Poincar\'e inequality (Theorem~8.2.4 in 
Diening--Harjulehto--H\"ast{\"o}--R{\r u}\v{z}i\v{c}ka~\cite{DHHR}),
applied to $v-\phi\in\Wpx_0(\Om_j)$ and some ball $B\supset\Om_j$, yields
\begin{align}  \label{eq-use-PI}
\|v-\phi\|_{\Lpx(B)} 
&\le C_B \|\grad(v-\phi)\|_{\Lpx(B)} \\
&\le C_B ( \|\grad v\|_{\Lpx(B)} + \|\grad\phi\|_{\Lpx(B)} ).
\nonumber
\end{align}
Since $v=\oHp_{\Om_j}\phi$, we conclude from \eqref{norm-mod-le-1} and \eqref{eq-cap-zero-small-modular} that
\begin{align*}
 \|\grad v\|_{\Lpx(B)} &\le \biggl( \int_B |\grad v(x)|^{p(x)}\,dx \biggr)^{1/\pplus}
\le \biggl( \int_B |\grad \phi(x)|^{p(x)}\,dx \biggr)^{1/\pplus}.
\end{align*}
Inserting this into \eqref{eq-use-PI}, together 
with~\eqref{norm-mod-le-1} again and~\eqref{eq-cap-zero-small-modular}, gives
\begin{align*}
\|v\|_{\Lpx(\Om_j)} &\le \|\phi\|_{\Lpx(B)} + \|v-\phi\|_{\Lpx(B)} \\
&\le \biggl( \int_B |\phi(x)|^{p(x)}\,dx \biggr)^{1/\pplus}
+ 2C_B \biggl( \int_B |\grad\phi(x)|^{p(x)}\,dx \biggr)^{1/\pplus}
\le 3C_B \eps^{1/\pplus}.
\end{align*}
Here we assume that $C_B\ge1$. 
It follows that 
\[
\|U\|_{\Lpx(\Om_j)} \le \|\eps+v\|_{\Lpx(\Om_j)} \le \eps\|1\|_{\Lpx(\Om)} + 3C_B \eps^{1/\pplus}.
\]
Letting $\eps\to0$ (and thus $j\to\infty$) implies
$\|U\|_{\Lpx(\Om\cup V)} =0$, and hence $U\equiv0$ in $\Om \cup V$.
\end{proof}

We are now ready to characterize semiregular boundary points in several
different ways.
Note that \ref{semireg-local} below
shows that semiregularity is a local property, even though
we have not shown that regularity is a local property.
The latter however follows from the Wiener criterion,
whose usage we have avoided in this paper.
It thus also follows that strong irregularity is a local property.
It would be nice to have a simpler and more direct proof
(without appealing to the Wiener criterion)
that regularity is a local property.
Such proofs are available in the constant $p$ case, using
barrier characterizations,
see Theorem~9.8 and Proposition~9.9 in
Heinonen--Kilpel\"ainen--Martio~\cite{HeKiMa} for the weighted $\R^n$ case,
and Theorem~6.1 in Bj\"orn--Bj\"orn~\cite{BB} (or \cite[Theorem~11.11]{BBbook}) for 
metric spaces.

\begin{thm} \label{thm-rem-irr-char}
Let $x_0 \in \bdy \Om$, $\de >0$ and $d(y)=d(y,x_0)$.
Then the following are equivalent\/\textup{:}
\begin{enumerate}
\item \label{semireg}
The point $x_0$ is semiregular.
\item \label{semireg-local}
The point $x_0$ is semiregular with respect to $G:=\Om \cap B(x_0,\de)$.
\item \label{not-reg-one-seq}
There is no sequence\/
$\{y_j\}_{j=1}^\infty$ such that\/
$\Om \ni y_j \to x_0$, as $j \to \infty$,
and 
\[
 \lim_{j \to \infty} \oHp f(y_j) =f(x_0)
\quad \text{for all $f \in C(\bdy \Om)$}.
\]
\item \label{R}
It is true that $x_0 \notin \overline{\{x \in \bdy \Om : x \text{ is regular}\}}$.
\item \label{Cp-V-bdy}
There is a neighbourhood $V$ of $x_0$ 
such that $\Cpx(V \cap \bdy \Om)=0$.
\item \label{Cp-V}
There is a neighbourhood $V$ of $x_0$ 
such that $\Cpx(V \setm \Om)=0$.
\item \label{rem-irr}
The point $x_0$ is irregular and there is a neighbourhood $V$ of $x_0$ 
such that every
bounded \px-harmonic function in\/ $\Om$ has a \px-harmonic extension
to\/ $\Om \cup V$.
\item \label{alt-def-irr}
There is a neighbourhood $V$ of $x_0$ such that 
$V \subset \overline{\Om}$  and every
bounded \px-harmonic function in\/ $\Om$ has a \px-harmonic extension
to\/ $\Om \cup V$.
\item \label{alt-def-irr-super}
There is a neighbourhood $V$ of $x_0$ such that $|V \setm\nobreak \Om|=0$
and every bounded superharmonic function in\/ $\Om$ 
has a superharmonic extension to\/ $\Om \cup V$.
\item \label{rem-motiv}
There is a neighbourhood $V$ of $x_0$ such that
for every $f \in C(\bdy \Om)$, the \px-harmonic extension
$\oHp f$ depends only on $f|_{\bdy \Om \setm V}$
\textup{(}i.e.\ if $f,h \in C(\bdy \Om)$ and
$f=h$ on $\bdy \Om \setm V$, then 
$\oHp f \equiv \oHp h$\textup{)}.
\item \label{d-lim}
It is true that for some positive integer $j$,
\[
   \lim_{\Om \ni y \to x_0} \oHp (jd)(y) > 0.
\] 
\item \label{d-liminf}
It is true that for some positive integer $j$,
\[
   \liminf_{\Om \ni y \to x_0} \oHp (jd)(y) > 0.
\] 
\end{enumerate}
\end{thm}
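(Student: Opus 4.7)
The plan is to organise the twelve conditions around three hubs: the capacity condition (e), semiregularity itself (a), and the behaviour of $\oHp(jd)$ in (k)/(l). The main engine is Theorem~\ref{thm-irr-char-V}: if $V$ is a neighbourhood of $x_0$, the relatively open set $V_0 := V \cap \bdy\Om$ in $\bdy\Om$ sits squarely in the hypotheses of that theorem, and its conditions (a')--(f') translate essentially term-for-term into (a), (b), (g)--(j) of the present statement. The Kellogg property (Theorem~\ref{thm-kellogg}), Lemma~\ref{lem-Cp-zero}, and the trichotomy (Theorem~\ref{thm-trichotomy}) will supply the remaining links.

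For (d) $\Leftrightarrow$ (e) $\Leftrightarrow$ (f): if $x_0$ is isolated from the regular boundary points, then some neighbourhood $V$ has $V \cap \bdy\Om$ consisting only of irregular points, whence $\Cpx(V \cap \bdy\Om)=0$ by Kellogg; conversely $\Cpx(V \cap \bdy\Om)=0$ forbids regular points in $V \cap \bdy\Om$ by Theorem~\ref{thm-irr-char-V}. The equivalence of (e) with (f) follows by shrinking $V$ to a connected ball and applying Lemma~\ref{lem-Cp-zero} with $F = V \setm \Om$, noting $\bdy F \cap V = V \cap \bdy\Om$. Once $V$ is connected and small, Lemma~\ref{lem-connected} pushes $V$ inside $\overline{\Om}$, so $\Om \cup V_0 = \Om \cup V$; then (a'), (d'), (e'), (f') of Theorem~\ref{thm-irr-char-V} yield (a), (g)/(h), (i), and (j) respectively. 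To get (b), choose $V \subset B(x_0,\de)$ so that near $x_0$ the boundaries of $\Om$ and of $G := \Om \cap B(x_0,\de)$ coincide; then (e) transfers to $G$, giving semiregularity with respect to $G$. All converses run the same way through Theorem~\ref{thm-irr-char-V}.

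For (a) $\Leftrightarrow$ (c), one direction is automatic: if $x_0$ is semiregular, some $f \in C(\bdy\Om)$ has $\lim \oHp f(y) \ne f(x_0)$, precluding any universal sequence. For the contrapositive (c) $\Rightarrow$ (d), assume $x_0 \in \overline{\{\text{regular points}\}}$, choose regular $x_j \to x_0$, fix a countable dense $\{f_k\} \subset C(\bdy\Om)$, and diagonally pick $y_j \in \Om$ with $|y_j-x_j| < 1/j$ and $|\oHp f_k(y_j) - f_k(x_j)| < 1/j$ for $k \le j$, using the regularity of $x_j$. The uniform estimate $\|\oHp f - \oHp g\|_\infty \le \|f - g\|_\infty$ derived from Lemma~\ref{lem-comp-principle-cont} then propagates the resulting convergence to every $f \in C(\bdy\Om)$, producing the forbidden sequence.

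Finally, (k) $\Rightarrow$ (l) is trivial. Assuming (l), $x_0$ is not regular since $jd(x_0)=0$, and strong irregularity is excluded because it would supply $y_j \to x_0$ with $\oHp(jd)(y_j) \to 0$; the trichotomy then gives (a). For (a) $\Rightarrow$ (k), limits $L_j := \lim \oHp(jd)(y) \ge 0$ exist by (a). If all $L_j = 0$, then for any $f \in C(\bdy\Om)$ with $f(x_0) = 0$ and any $\eps > 0$, choose $j$ so that $|f| \le \eps + jd$ on $\bdy\Om$; the symmetry $\oHp(-g) = -\oHp g$ (from evenness of $|\grad u|^{\p}$ in $\grad u$) together with $\oHp(g+c) = \oHp g + c$ and the comparison principle give $-\eps - \oHp(jd) \le \oHp f \le \eps + \oHp(jd)$, forcing $\lim \oHp f(y) = 0 = f(x_0)$ and hence regularity of $x_0$, contradicting (a). The main obstacle is the diagonal/density step in (c) $\Rightarrow$ (d); everything else is routine once Theorem~\ref{thm-irr-char-V}, the Kellogg property, and the trichotomy are in hand.
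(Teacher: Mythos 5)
Your overall strategy matches the paper's: pivot almost everything through Theorem~\ref{thm-irr-char-V}, use the Kellogg property for the link with~\ref{R}, the comparison principle for the $\oHp(jd)$ conditions, and the trichotomy to close the irregularity loop. A few chains differ in detail but are sound: your diagonal argument over a countable dense subset of $C(\bdy\Om)$ for $\neg\ref{R}\Rightarrow\neg\ref{not-reg-one-seq}$ works (the paper instead exploits $f_j=jd$ and the estimate $\oHp f_j(y_j)\le 2/j$, which avoids separability); your route through Lemma~\ref{lem-Cp-zero} applied to $F=V\setm\Om$ in a small ball for \ref{Cp-V-bdy}$\eqv$\ref{Cp-V} is correct (the paper instead extracts openness of $\Om\cup(V\cap\bdy\Om)$ from Theorem~\ref{thm-irr-char-V}); and your $\ref{d-liminf}\Rightarrow\ref{semireg}$ via the trichotomy is fine, though the paper's $\ref{d-liminf}\Rightarrow\ref{not-reg-one-seq}$ is trivial and shorter.

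There is, however, one real gap: you assert that ``all converses run the same way through Theorem~\ref{thm-irr-char-V}'', but condition~\ref{rem-irr} cannot be fed into that theorem. In~\ref{rem-irr} the neighbourhood $V$ is unrestricted --- it need not lie in $\overline\Om$, $\Om\cup(V\cap\bdy\Om)$ need not be open, and nothing forces $\Cpx(V\cap\bdy\Om)=0$ --- so $V\cap\bdy\Om$ need not satisfy any of~\ref{V-alt-def-irr} or~\ref{V-alt-def-irr-super}. Instead, \ref{rem-irr} must be handled directly: from the assumed \px-harmonic extension $U$ of $\oHp f$ to $\Om\cup V$ one gets $\lim_{\Om\ni y\to x_0}\oHp f(y)=U(x_0)$ for every $f\in C(\bdy\Om)$, so condition~\eqref{eq-semi} of the introduction holds, and combined with the explicit irregularity hypothesis in~\ref{rem-irr} this yields semiregularity. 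Conversely, to reach~\ref{rem-irr} from~\ref{alt-def-irr} you need to extract $x_0$'s irregularity from $\ref{alt-def-irr}\Rightarrow\ref{semireg}$. Without this standalone argument the equivalence loop is broken at~\ref{rem-irr}.
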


\begin{remark} \label{rmk-strong}
Note that \ref{not-reg-one-seq}  says that if $x_0$ is strongly irregular,
then the sequence $\{y_j\}_{j=1}^\infty$ occurring in \eqref{eq-strong}
can be chosen independently of $f \in C(\bdy \Om)$.
\end{remark}

\begin{proof}
\ref{R} $\eqv$ \ref{Cp-V-bdy} 
 $\eqv$ \ref{rem-motiv} 
 $\imp$ \ref{semireg}
This follows directly from 
Theorem~\ref{thm-irr-char-V},
with $V$ in Theorem~\ref{thm-irr-char-V} corresponding to
$V \cap \bdy \Om$ here.

\ref{semireg} $\imp$ \ref{d-lim}
The limit
\[
       c_j:=\lim_{\Om \ni y \to x_0} \oHp (jd)(y)
\]
exists for $j=1,2,\ldots$.
If $c_j$ were $0$ for $j=1,2,\ldots$, then $x_0$ would be regular,
by 
Theorem~\ref{reg-thm-1}, 
a contradiction.
Thus $c_j>0$ for some positive integer $j$.

\ref{d-lim} $\imp$ \ref{d-liminf} 
$\imp$ \ref{not-reg-one-seq}
This is trivial.

$\neg$\ref{R} $\imp$ $\neg$\ref{not-reg-one-seq}
For each $j \ge 1$,
$B(x_0,1/j^2)\cap \bdy \Om$ contains a regular boundary point $x_j$.
Let $f_j=jd\in C(\bdry\Om)$. 
Then we can find $y_j \in B(x_j,1/j) \cap \Om$ so that
\[
        \frac{1}{j} > | f_j(x_j)-\oHp f_j(y_j)|. 
\]
Since $0\le f_j(x_j)\le1/j$, we have $\oHp f_j(y_j)\le 2/j$.
Moreover, $y_j \to x_0$ 
as $j \to \infty$.

Let now $f \in C(\bdy \Om)$.
Without loss of generality we may assume that $|f|\le M <\infty$
and that $f(x_0)=0$.
Let $\eps >0$. Then we can find $k$ such that
\[
      |f| \le \eps 
      \quad \text{on } B(x_0,1/k) \cap \bdy \Om.
\]
For $j\ge Mk$ we have $f_j\ge M$ on $\bdry\Om\setm B(x_0,1/k)$ and hence
$|f|\le\eps+ f_j$ on $\bdry\Om$.
It follows that for $j\ge Mk$,
\begin{alignat*}{2}
\oHp f(y_j) &\le \eps + \oHp f_j(y_j) \le \eps + \frac2j \to \eps ,
   &\quad& \text{as } j \to \infty
\intertext{and}
\oHp f(y_j) &\ge -\eps - \oHp f_j(y_j) \ge -\eps - \frac2j \to -\eps,
   &\quad& \text{as } j \to \infty.
\end{alignat*}
Letting $\eps \to 0$ gives $\lim_{j \to \infty} \oHp f(y_j) =0$, i.e.\ 
\ref{not-reg-one-seq} fails.

\ref{Cp-V-bdy} $\eqv$ \ref{semireg-local}
Note first that \ref{Cp-V-bdy} is equivalent to
the existence of a neighbourhood $W$ of $x_0$
with $\Cpx(W \cap \bdy G)=0$.
But this is equivalent to \ref{semireg-local},
by the already proved equivalence \ref{Cp-V-bdy} $\eqv$ \ref{semireg}
applied to $G$ instead of $\Om$.

\ref{Cp-V-bdy} $\imp$ \ref{Cp-V}
By Theorem~\ref{thm-irr-char-V}, \ref{V-Cp-V-bdy} $\imp$ \ref{V-alt-def-irr-super},
the set $\Om \cup (V \cap \bdy \Om)$ is open, and we
can use this as our set $V$ in \ref{Cp-V}.

\ref{Cp-V} $\imp$ \ref{Cp-V-bdy}
This is trivial.

\ref{Cp-V} $\eqv$ \ref{alt-def-irr} $\eqv$ 
\ref{alt-def-irr-super}
In all three statements 
it follows directly that $V \subset \overline{\Om}$.
Thus their equivalence follows directly from Theorem~\ref{thm-irr-char-V},
with $V$ in Theorem~\ref{thm-irr-char-V} corresponding to
$V \cap \bdy \Om$ here.

\ref{alt-def-irr} $\imp$ \ref{rem-irr}
The first part is obvious and we only need to show that
$x_0$ is irregular, 
but this follows from the already proved implication
\ref{alt-def-irr} $\imp$ \ref{semireg}

\ref{rem-irr} $\imp$ \ref{semireg}
Let $f\in C(\bdy \Om)$.
Then $\oHp f$ has a \px-harmonic extension $U$ to $\Om \cup V$
for some neighbourhood $V$ of $x_0$.
It follows that
\[
       \lim_{\Om \ni y \to x_0} \oHp f(y)=U(x_0),
\]
and thus the limit in the left-hand side always exists.
Since $x_0$ is irregular it follows that $x_0$ must be semiregular.
\end{proof}

We end this paper with some 
examples of semiregular and strongly irregular boundary points.

\begin{example} \label{ex-punctured-ball}
\textup{(The punctured ball)}
Let $\Om=B(x_0,r) \setm \{x_0\}$.
Then $x_0$ is semiregular if $\Cpx(\{x_0\})=0$,
and regular otherwise.
Indeed, when $\Cpx(\{x_0\})=0$ this follows from
Theorem~\ref{thm-irr-char-V}, and is also a special
case of Proposition~\ref{prop-ball-minus-K} below,
while if $\Cpx(\{x_0\})>0$ it follows
from the Kellogg property (Theorem~\ref{thm-kellogg}).
The remaining boundary points are all regular
by the sufficiency part of the Wiener criterion, or some weaker version of it.
\end{example}

\begin{prop} \label{prop-ball-minus-K}
Let $K$ be a compact set with $\Cpx(K)=0$.
Then there is a domain\/ $\Om$ such that
$K$ is the set of semiregular boundary points and all 
other boundary points are regular.
\end{prop}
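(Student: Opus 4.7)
The plan is to take a large open ball $B$ with $K\subset B$ and set $\Om:=B\setm K$. Since $\Cpx(K)=0$, Lemma~\ref{lem-connected} (applied with $G=B$) implies that $\Om$ is connected; being open and bounded, it is a bounded domain. As $K$ has empty interior (its $\px$-capacity being zero), one has $\overline{\Om}=\overline{B}$, and so the boundary decomposes as a disjoint union $\bdy\Om=\bdy B\cup K$ (disjoint because the compact set $K$ lies in the open ball $B$).

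To show that every $x_0\in K$ is semiregular, observe that $\bdy B$ is closed in $\bdy\Om$, so $K=\bdy\Om\setm\bdy B$ is relatively open in $\bdy\Om$ with $\Cpx(K)=0$. Condition~\ref{V-Cp-V-bdy} of Theorem~\ref{thm-irr-char-V} then yields condition~\ref{V-semireg}, i.e.\ every point of $K$ is semiregular. The only substantial remaining step is to verify that every $x_0\in\bdy B$ is regular.

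My approach to regularity at $\bdy B$ combines the Wiener criterion of Alkhutov--Krasheninnikova with the removability result Theorem~\ref{thm-removability-qharm}. Given $\phi\in\Lip(\bdy\Om)$, take a McShane extension $\tilde\phi\in\Lip(\overline{B})$ and set $u:=\oHp\phi$ on $\Om$. By Theorem~\ref{thm-removability-qharm}, $u$ admits a unique \px-harmonic extension $U\in\Wpx(B)$. I would then identify $U$ with the solution $\oHp\tilde\phi$ on $B$ by showing that $U-\tilde\phi\in\Wpx_0(B)$: starting from $u-\tilde\phi\in\Wpx_0(\Om)$, Proposition~\ref{prop-Wpx0} extends its quasicontinuous representative by zero to a quasicontinuous function in $\Wpx(\R^n)$ that vanishes q.e.\ on $(\R^n\setm B)\cup K$, in particular q.e.\ on $\bdy B$; since $|K|=0$ this zero extension agrees with $U-\tilde\phi$ a.e.\ in $B$, so Lemma~\ref{lem-Wpx0-qe-on-bdry} delivers $U-\tilde\phi\in\Wpx_0(B)$. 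The Wiener criterion guarantees that every point of $\bdy B$ is regular for the ball $B$, so $U=\oHp\tilde\phi$ is continuous at $x_0$ with $U(x_0)=\phi(x_0)$, yielding $\lim_{\Om\ni y\to x_0}\oHp\phi(y)=\phi(x_0)$. Uniform Lipschitz approximation together with Lemma~\ref{lem-Hp-def} and the comparison principle (Lemma~\ref{lem-comp-principle-cont}) then extend this to arbitrary $f\in C(\bdy\Om)$.

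The main obstacle is precisely the identification $U=\oHp\tilde\phi$ on $B$, which rests on the interplay between removability across $K$ and the quasicontinuity-based characterization of $\Wpx_0$ provided by Proposition~\ref{prop-Wpx0} and Lemma~\ref{lem-Wpx0-qe-on-bdry}. The remaining steps are either definitional or immediate applications of results established earlier. The use of the Wiener criterion for the ball itself appears unavoidable in this framework, which is consistent with the authors' earlier remark that the Wiener criterion is invoked for this proposition.
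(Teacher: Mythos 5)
Your construction is the same as the paper's ($\Om = B\setminus K$), and your argument for semiregularity on $K$ is identical: $K$ is relatively open in $\bdy\Om$ with $\Cpx(K)=0$, so Theorem~\ref{thm-irr-char-V}\,\ref{V-Cp-V-bdy}\,$\imp$\,\ref{V-semireg} applies. Where you diverge is in proving regularity at $\bdy B$. The paper simply invokes the sufficiency part of the Wiener criterion (or a weaker geometric condition) \emph{directly for the domain} $\Om$: since $\R^n\setm\Om\supset\R^n\setm B$ near any $x_0\in\bdy B$, the relevant Wiener-type integral for $\Om$ dominates the one for $B$, which diverges, and divergence alone (sufficiency) gives regularity. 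Your alternative route --- extend $\oHp\phi$ across $K$ via Theorem~\ref{thm-removability-qharm}, identify the extension with $\oHp_B\tilde\phi$ via Proposition~\ref{prop-Wpx0} and Lemma~\ref{lem-Wpx0-qe-on-bdry}, and then apply the Wiener criterion to $B$ --- is correct but substantially longer and doesn't remove the dependence on the Wiener criterion that it seems designed to isolate; it merely restricts its use to balls.

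One small imprecision worth flagging: when you argue that Lemma~\ref{lem-Wpx0-qe-on-bdry} applies to $U-\tilde\phi$, you say the zero extension of $u-\tilde\phi$ provided by Proposition~\ref{prop-Wpx0} agrees with (the zero extension of) $U-\tilde\phi$ \emph{a.e.}, and invoke $|K|=0$. Almost-everywhere agreement is not by itself enough to transfer quasicontinuity; you need agreement q.e., which in fact holds here because the two functions differ only on $K$ and $\Cpx(K)=0$. With that fix, the chain $W$ quasicontinuous in $\R^n$, $W=0$ on $\bdy B$, $W|_B=U-\tilde\phi\in\Wpx(B)$ $\imp$ $U-\tilde\phi\in\Wpx_0(B)$ goes through, and the rest of your identification $U=\oHp_B\tilde\phi$ is sound. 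So: correct, same skeleton, but a longer detour for the regularity half; tightening the q.e.\ step is advisable.
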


\begin{proof}
Let $B$ be an open ball containing $K$ and let $\Om=B \setm K$.
Then $K$ is an open subset of $\bdy \Om$ and as $\Cpx(K)=0$ 
it follows from Theorem~\ref{thm-irr-char-V} that $K$ consists entirely
of semiregular points.
By the sufficiency part of the  Wiener criterion, or some weaker version of it,
all other boundary points
are regular. 
\end{proof}

\begin{prop} \label{prop-K1-K2}
Assume that $\pplus \le n$.
Let $K_1$ and $K_2$  be two disjoint compact subsets of\/ $\R^n$
with $\Cpx(K_1)=\Cpx(K_2)=0$.
Then there is a domain\/ $\Om$ such that
$K_1$ is the set of semiregular boundary points,
$K_2$ is the set of strongly irregular boundary points,
 and all 
other boundary points are regular.
\end{prop}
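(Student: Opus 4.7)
The plan is to construct $\Om=B\setm E$, where $B$ is a large open ball containing $K_1\cup K_2$ and $E\supset K_1\cup K_2$ is a compact set engineered so that near $K_1$ the set $E$ equals $K_1$, while near each $x_0\in K_2$ the set $E$ additionally contains small closed balls accumulating at $x_0$, contributing positive $\px$-capacity without restoring regularity at $x_0$.

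Concretely, I would first pick disjoint open neighborhoods $U_1\supset K_1$ and $U_2\supset K_2$ with $\overline{U_1\cup U_2}\subset B$. Fix a countable dense sequence $\{y_k\}\subset K_2$ and, for each $k$, pick a point $x_k\in U_2\setm K_2$ with $|x_k-y_k|=r_k\to0$ together with a radius $s_k\in(0,\tfrac12 r_k)$ chosen so small (depending on the variable-exponent Wiener integrand of Alkhutov--Krasheninnikova) that the Wiener integral at every $x_0\in K_2$ for the complement of $\Om$ remains finite. Setting $B_k=\overline{B(x_k,s_k)}$ and $E=K_1\cup K_2\cup\bigcup_k B_k$ gives a compact set whose accumulation points lie entirely in $K_2$. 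One then verifies that $\Om:=B\setm E$ is connected: $B\setm\bigcup_k B_k$ is connected provided the $s_k$ are small enough, and Lemma~\ref{lem-connected} preserves connectedness after removing the zero-capacity set $K_1\cup K_2$.

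The boundary of $\Om$ decomposes into four pieces, which I would analyze in turn. Points on $\partial B$ and on $\partial B_k$ for any fixed $k$ are regular by the sufficient part of the Wiener criterion (exterior-ball condition). Points of $K_1$ are semiregular: the set $K_1$ is relatively open in $\partial\Om$ (since $U_1\cap E=K_1$) and $\Cpx(K_1)=0$, so Theorem~\ref{thm-irr-char-V} gives semiregularity of each point of $K_1$. Points $x_0\in K_2$ are irregular by the necessary part of the Wiener criterion, since the Wiener integrand at $x_0$ has been made summable by our choice of $s_k$; but every neighborhood of $x_0$ contains some $B_k$ with $\Cpx(B_k)>0$, so $x_0$ is not semiregular by Theorem~\ref{thm-rem-irr-char}\,\ref{Cp-V-bdy}. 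The trichotomy (Theorem~\ref{thm-trichotomy}) then forces $x_0$ to be strongly irregular.

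The main obstacle is the simultaneous control required on the balls $B_k$: they must be \emph{dense} enough (via the choice of $\{x_k\}$) that every point of $K_2$ is an accumulation point of $\{B_k\}$, yet \emph{thin} enough (via the choice of $s_k$) that the Wiener integral at every $x_0\in K_2$ still converges. Since $K_2$ may be uncountable, a tail estimate summing the Wiener contributions of the $B_k$ over dyadic shells about a generic $x_0\in K_2$ is needed, and the hypothesis $\pplus\le n$ enters precisely here: it guarantees that small enough balls $B_k$ have arbitrarily small $\px$-capacity (so the $s_k$ may be chosen as demanded) and that single points have zero $\px$-capacity (so the tips $x_0\in K_2$ can genuinely be irregular).
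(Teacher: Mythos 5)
The paper does not give its own proof of this proposition; it refers to Theorem~4.1 of A.~Bj\"orn~\cite{ABclass} and notes that the Wiener criterion and the fact that points have zero $\px$-capacity (hence the requirement $\pplus\le n$) are the essential ingredients. Your construction---remove from a large ball the set $K_1\cup K_2$ together with a countable family of tiny closed balls $B_k$ accumulating at every point of $K_2$, chosen small enough to keep the Wiener integral finite at each $x_0\in K_2$ but with $\Cpx(\bdy B_k)>0$ inside every neighbourhood of $x_0$, and then invoke Theorem~\ref{thm-irr-char-V}, Theorem~\ref{thm-rem-irr-char} and the trichotomy---is precisely that standard construction, and the role you assign to $\pplus\le n$ (it makes $\Cpx(B_k)\to0$ possible as $s_k\to0$) is the right one.

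A few details are stated a bit too loosely and would need repair. The constraint $s_k<\tfrac12 r_k=\tfrac12|x_k-y_k|$ does not ensure $B_k\cap K_2=\emptyset$, nor $B_k\cap K_1=\emptyset$, nor that the $B_k$ are pairwise disjoint; you need, e.g., $s_k<\tfrac12 \, d\bigl(x_k,\,K_1\cup K_2\cup\bigcup_{j<k}B_j\cup\bdy U_2\bigr)$. Connectedness of $\Om$ is asserted ``provided the $s_k$ are small enough'' without an argument; since $\bigcup_k B_k$ has positive $(n-1)$-dimensional measure and its accumulation set is all of $K_2$, an inductive choice (each $B_{k+1}$ compactly contained in the previous domain, together with an argument that the countable intersection remains connected) is needed rather than just Lemma~\ref{lem-connected}, which handles only $K_1\cup K_2$. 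Finally, the simultaneous Wiener-thinness of $\bigcup_k B_k$ at \emph{every} $x_0\in K_2$ is the technical heart of the matter and is only named, not carried out; this is exactly what the cited reference does. None of these is a wrong turn, but they are the parts a complete proof must supply.
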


The proof of this is very similar to the proof of the corresponding 
result for the constant $p$ case, as given
for Theorem~4.1 in A.~Bj\"orn~\cite{ABclass},
and we leave it to the interested reader to verify.
Here we need to use the Wiener criterion. 
An essential fact also used in the proof is that points have zero capacity,
which is the reason for the requirement $\pplus \le n$.
Whether the result is true without this condition is not clear, 
see Section~5 in \cite{ABclass}.

\end{document}